%% LyX 2.0.2 created this file.  For more info, see http://www.lyx.org/.
%% Do not edit unless you really know what you are doing.
\RequirePackage{fix-cm}
\documentclass[english]{article}
\usepackage[T1]{fontenc}
\usepackage[latin9]{inputenc}
\usepackage{listings}
\usepackage[a4paper]{geometry}
\geometry{verbose,tmargin=2.7cm,bmargin=2.7cm,lmargin=2.5cm}
\setlength{\parskip}{\medskipamount}
\setlength{\parindent}{0pt}
\usepackage{color}
\usepackage{babel}
\usepackage{float}
\usepackage{textcomp}
\usepackage{url}
\usepackage{amsthm}
\usepackage{amsmath}
\usepackage{amssymb}
\usepackage{fixltx2e}
\usepackage{graphicx}
\usepackage{esint}
\usepackage[unicode=true,pdfusetitle,
 bookmarks=true,bookmarksnumbered=true,bookmarksopen=true,bookmarksopenlevel=1,
 breaklinks=true,pdfborder={0 0 1},backref=false,colorlinks=true]
 {hyperref}
\hypersetup{
 linkcolor=black, citecolor=blue, urlcolor=blue, filecolor=blue,pdfpagelayout=OneColumn, pdfnewwindow=true,pdfstartview=XYZ, plainpages=false, pdfpagelabels, hyperindex=true}
\usepackage{breakurl}

\makeatletter

%%%%%%%%%%%%%%%%%%%%%%%%%%%%%% LyX specific LaTeX commands.
\newcommand{\lyxmathsym}[1]{\ifmmode\begingroup\def\b@ld{bold}
  \text{\ifx\math@version\b@ld\bfseries\fi#1}\endgroup\else#1\fi}

%% Because html converters don't know tabularnewline
\providecommand{\tabularnewline}{\\}
\floatstyle{ruled}
\newfloat{algorithm}{tbp}{loa}
\providecommand{\algorithmname}{Algorithm}
\floatname{algorithm}{\protect\algorithmname}

%%%%%%%%%%%%%%%%%%%%%%%%%%%%%% Textclass specific LaTeX commands.
\usepackage{enumitem}		% customizable list environments
      % auxiliary length 
\numberwithin{equation}{section}
\numberwithin{figure}{section}
\numberwithin{table}{section}
  \theoremstyle{plain}
  \newtheorem{assumption}{\protect\assumptionname}
  \theoremstyle{remark}
  \newtheorem{rem}{\protect\remarkname}[section]
  \theoremstyle{definition}
  \newtheorem{example}{\protect\examplename}[section]
  \theoremstyle{plain}
  \newtheorem{thm}{\protect\theoremname}[section]
  \theoremstyle{plain}
  \newtheorem{lem}{\protect\lemmaname}[section]
  \theoremstyle{plain}
  \newtheorem{prop}{\protect\propositionname}[section]
  \theoremstyle{plain}
  \newtheorem{cor}{\protect\corollaryname}[section]

%%%%%%%%%%%%%%%%%%%%%%%%%%%%%% User specified LaTeX commands.
\usepackage{fancyhdr}
\usepackage{lastpage}
\usepackage{amsmath}
\usepackage{amssymb} 
\usepackage{graphicx}
\usepackage{color}
\usepackage{fancybox} 
\usepackage{moreverb} 
\usepackage{hangcaption} 
\usepackage{listings} 
\usepackage{tocbibind}

\usepackage{ifpdf} % part of the hyperref bundle
\ifpdf % if pdflatex is used

 % set fonts for nicer pdf view
 \IfFileExists{lmodern.sty}{\usepackage{lmodern}}{}

\fi % end if pdflatex is used

% the pages of the TOC is numbered roman
% and a pdf-bookmark for the TOC is added
\let\myTOC\tableofcontents
\renewcommand\tableofcontents{%
  \frontmatter
  \pdfbookmark[1]{\contentsname}{}
  \myTOC
  \mainmatter }

% redefine the \LyX macro for PDF bookmarks
\def\LyX{\texorpdfstring{%
  L\kern-.1667em\lower.25em\hbox{Y}\kern-.125emX\@}
  {LyX}}

\@addtoreset{footnote}{section}

% Euro symbol €
\usepackage{eurosym}

%% "Cited on page" in References
%\renewcommand*{\backref}[1]{}
%% redefinition of the  \backrefalt
%\renewcommand*{\backrefalt}[4]{%
%   \ifcase #1 %no citation
%    \or %cited on exactly one page
%      (Cited on page~#2.)%
%   \else%cited on multiple pages
%      (Cited on pages~#2.)
%    \fi} 

% enumitem
\usepackage{enumitem}
\setlist{leftmargin=*, topsep=0.5em, parsep=0pt, itemsep=1em, labelindent=0pt, align=left}

\@ifundefined{showcaptionsetup}{}{%
 \PassOptionsToPackage{caption=false}{subfig}}
\usepackage{subfig}
\makeatother

  \providecommand{\assumptionname}{Assumption}
  \providecommand{\examplename}{Example}
  \providecommand{\lemmaname}{Lemma}
  \providecommand{\propositionname}{Proposition}
  \providecommand{\remarkname}{Remark}
\providecommand{\corollaryname}{Corollary}
\providecommand{\theoremname}{Theorem}

\begin{document}

\title{A probabilistic numerical method\\
for optimal multiple switching problem\\
and application to investments in electricity generation%
\thanks{The authors would like to thank Thomas Vareschi, Xavier Warin and
the participants of the FiME seminar and the Energy Finance conference
for their helpful remarks.%
}}

\author{René Aïd%
\thanks{EDF R\&D and FiME (Finance for Energy Market Research Centre, www.fime-lab.org)
; rene.aid@edf.fr%
}\quad{}, Luciano Campi%
\thanks{Univ Paris Nord, Sorbonne Paris Cité, LAGA (Laboratoire Analyse, Géométrie
et Applications), FiME and CREST ; campi@math.univ-paris13.fr%
}\quad{}, Nicolas Langrené%
\thanks{Univ Paris Diderot, Sorbonne Paris Cité, LPMA (Laboratoire de Probabilités
et Modèles Aléatoires, CNRS, UMR 7599) ; nicolas.langrene@paris7.jussieu.fr%
}\quad{}, Huyên Pham%
\thanks{Univ Paris Diderot, Sorbonne Paris Cité, LPMA and CREST-ENSAE, pham@math.univ-paris-diderot.fr%
}}
\maketitle
\begin{abstract}
In this paper, we present a probabilistic numerical algorithm combining
dynamic programming, Monte Carlo simulations and local basis regressions
to solve non-stationary optimal multiple switching problems in infinite
horizon. We provide the rate of convergence of the method in terms
of the time step used to discretize the problem, of the size of the
local hypercubes involved in the regressions, and of the truncating
time horizon. To make the method viable for problems in high dimension
and long time horizon, we extend a memory reduction method to the
general Euler scheme, so that, when performing the numerical resolution,
the storage of the Monte Carlo simulation paths is not needed. Then,
we apply this algorithm to a model of optimal investment in power
plants. This model takes into account electricity demand, cointegrated
fuel prices, carbon price and random outages of power plants. It computes
the optimal level of investment in each generation technology, considered
as a whole, w.r.t. the electricity spot price. This electricity price
is itself built according to a new extended structural model. In particular,
it is a function of several factors, among which the installed capacities.
The evolution of the optimal generation mix is illustrated on a realistic
numerical problem in dimension eight, i.e. with two different technologies
and six random factors. 
\end{abstract}

\section{Introduction}

This paper presents a probabilistic numerical method for multiple
switching problem with an application to a new stylized long-term
investment model for electricity generation. Since electricity cannot
be stored and building new plants takes several years, investment
in new capacities must be decided a long time in advance if a country
wishes to be able to satisfy its demand%
\footnote{See for instance the recent massive power system blackouts in Brazil
and Paraguay on November $10^{\mathrm{th}}$ and $11^{\mathrm{th}}$,
2009 (87 millions of people affected), and in India on July $30^{\mathrm{th}}$
and $31^{\mathrm{st}}$, 2012. (670 millions of people affected).%
}. Before the worldwide liberalization of the electricity sector, electric
utilities were monopolies whose objective was to plan the construction
of power plants in order to satisfy demand at the minimum cost under
a given constraint on the loss of load probability or on the level
of energy non-served. This investment process was called \textit{generation
expansion planning} (GEP). Its output was mostly a given set of power
plants to build for the next ten or twenty years (see \cite{IAEA84}
for a comprehensive description of the GEP methodology and related
difficulties). Despite thirty years of liberalization of the electricity
sector, of the recognition that GEP methods were inadequate within
a market context (\cite{Hobbs95,Dyner01}) and of an important set
of alternative methods (see \cite{Foley10} and \cite{Connolly10}
for recent surveys on generation investment models and softwares),
power utilities still heavily rely on GEP methods (see \cite{Aid10}).
However, real option methods, which should have been the natural alternative
valuation method for firms converted to a value maximizing objective,
did not emerge as the method of choice. Despite the important body
of literature that followed \cite{McDonald86}'s seminal paper, \cite{Dixit94}'s
monography and implementations for electricity generation investment
(see for instance \cite{Botterud05}), real options still remain a
marginal way of assessing investment decisions both in the electric
sector and in the industry in general (see for instance, amongst the
recurrent surveys on capital budgeting methods, \cite{Baker12}).
Nevertheless, as shown in \cite{McDonald98}, firms tend to reproduce
with heuristic constraints (such as hurdle rate or profitability index)
the decision criteria given by real option methodology.

The main reason for this situation lies in the considerable mathematical
difficulties involved in the conception of a tractable yet realistic
real option model for electricity generation. This difficulty reflects
in the literature where the main trend consists in designing a small
dimensional (1 or 2) real option model to assess investment behaviour
with respect to some specific variables (see for instance \cite{Aguerrevere03,BarIlan02}
for models in dimension 2 analysing the effects of uncertainty and
time to build). It is still possible to find investment model in dimension
3 based on dynamic programming which are numerically tractable (see
for instance \cite{Mo91,Botterud05} and Section \ref{sec:Application}
for comments). But, in higher dimension, because of the curse of dimensionality,
investment models mainly rely on decision trees to represent random
factors (see \cite{Ahmed03} for a recent typical implementation of
this approach). The resulting tractability is however obtained at
the expense of a crude simplification of the statistical properties
of the factors. 

Our approach in the present paper takes advantage of the considerable
progress made in the last ten years by numerical methods for high-dimensional
American options valuation problems to propose a probabilistic way
to look at future electricity generation mixes. For an up-to-date
state of the art on this subject, the reader is referred to the recent
book \cite{Carmona12-2}. 

In this paper, we first adapt the resolution of American option problems
by Monte-Carlo methods (\cite{Longstaff01,Tsitsiklis01}) to the more
general class of optimal switching problems. The crucial choice of
regression basis is done here in the light of the work of \cite{Bouchard11},
so as to obtain a stable algorithm suited to high-dimensional problems,
aiming at the best possible numerical complexity. The memory complexity,
often acknowledged as the major drawback of such a Monte Carlo approach
(see \cite{Carmona08}), is drastically slashed by generalizing the
memory reduction method from \cite{Chan04,Chan06,Chan11} to any stochastic
differential equation. We provide a rigorous and comprehensive analysis
of the rate of convergence of our algorithm, taking advantage of the
works of, most notably, \cite{Bouchard04}, \cite{Tan11} and \cite{Gassiat11}.
Note that such features as infinite horizon and non-stationarity are
encompassed here. Finally, we build a long-term structural model for
the spot price of electricity, extending the work of \cite{Aid09}
and \cite{Aid12} in several directions (cointegrated fuels and $\mathrm{CO_{2}}$
prices, stochastic availability rate of production capacities, etc.).
This model is itself incorporated into an optimal control problem
corresponding to the search for the optimal investments in electricity
generation. The resolution of this problem using our algorithm is
illustrated on a simple numerical example with two different technologies,
leading to an eight-dimensional problem (demand, $\mathrm{CO_{2}}$
price, and, for each technology, fuel price, random outages and the
controlled installed capacity). The time evolution of the distribution
of power prices and of the generation mix is illustrated on a forty-year
time horizon.

To sum up, the contribution of the paper is threefold. Firstly, it
provides, for a suitably chosen regression basis, a comprehensive
analysis of convergence of a regression-based Monte-Carlo algorithm
for a class of infinite horizon optimal multiple switching problems,
large enough to handle realistic short term profit functions and investment
cost structures with possible seasonality patterns. Secondly, it adapts
and generalizes a memory reduction method in order to slash the amount
of memory required by the Monte Carlo algorithm. Thirdly, a new stylised
investment model for electricity generation is proposed, taking into
account electricity demand, cointegrated fuel prices, carbon price
and random outages of power plants, used as building blocks of a new
structural model for the electricity spot price. A numerical resolution
of this investment problem with our algorithm is illustrated on a
specific example, providing, among many other outputs, an electricity
spot price dynamics consistent with the investment decision process
in power generation.

The outline of the paper is the following. Section \ref{sec:Optimal-switching}
describes the class of optimal switching problems studied here, including
the detailed list of assumptions considered. Section \ref{sub:Convergence}
describes the resolution algorithm and analyzes its rate of convergence,
in terms of the discretization step, of the size of the local hypercubes
from the regression basis, and of the truncating time horizon. Section
\ref{sec:Complexity} details the computational complexity of the
algorithm, as well as its memory complexity, along with the construction
of the memory reduction method. Finally, Section \ref{sec:Application}
introduces the extended structural model of power spot price, the
investment problem, as well as an illustrated numerical resolution.
Section \ref{sec:Conclusion} concludes the paper.

\subsection*{Notation\label{sub:Notation}}

Here are some notation that will be used throughout the paper:
\begin{itemize}
\item The notation $\mathbf{1}\left\{ .\right\} $ stands for the indicator
function.
\item \label{not:SuperConstant}Throughout the paper, $C>0$ denotes a generic
constant whose value may differ from line to line, but which does
not depend on any parameter of our scheme.
\item For any stochastic process $X=\left(X_{s}\right)_{s\geq0}$ taking
values in a given set $\mathcal{X}$, and any $\left(t,x\right)\in\mathbb{R}_{+}\times\mathcal{X}$,
we denote as $X^{t,x}=\left(X_{s}^{t,x}\right)_{s\geq t}$ the stochastic
process with the same dynamics as $X$, but starting from $x$ at
time $t$: $X_{t}^{t,x}=x$. 
\item For any $\left(a,b\right)\in\mathbb{R}\times\mathbb{R}$, $a\wedge b:=\min\left(a,b\right)$
and $a\vee b:=\max\left(a,b\right)$.
\item $\forall p\geq1$, the norms $\left\Vert .\right\Vert _{p}$ and $\left\Vert .\right\Vert _{L_{p}}$
denote respectively the $p-$norm and the $L_{p}$- norm: $\forall x\in\mathbb{R}^{n}$
and any $\mathbb{R}$-valued random variable $X$ such that $\mathbb{E}\left[\left|X\right|^{p}\right]<\infty$:
\[
\begin{array}{rcl}
\left\Vert x\right\Vert _{p}=\left(\sum_{i=1}^{n}\left|x_{i}\right|^{p}\right)^{\frac{1}{p}} & \,,\, & \left\Vert X\right\Vert _{L_{p}}=\mathbb{E}\left[\left|X\right|^{p}\right]^{\frac{1}{p}}\end{array}
\]
We recall that $\forall p\geq1$, $\forall x\in\mathbb{R}^{n}$, $\left\Vert x\right\Vert _{p}\leq\left\Vert x\right\Vert _{1}\leq n^{\frac{p-1}{p}}\left\Vert x\right\Vert _{p}$ 
\end{itemize}

\section{Optimal switching problem\label{sec:Optimal-switching}}

\subsection{Formulation\label{sub:Formulation}}

Fix a filtered probability space $\left(\Omega,\mathcal{F},\mathbb{F}=\left(\mathcal{F}_{t}\right)_{t\geq0},\mathbb{P}\right)$,
where $\mathbb{F}$ satisfies the usual conditions of right-continuity
and $\mathbb{P}$-completeness. We consider the following general
class of (non-stationary) optimal switching problems:
\begin{equation}
v\left(t,x,i\right)=\sup_{\alpha\in\mathcal{A}_{t,i}}\mathbb{E}\left[\int_{t}^{\infty}f\left(s,X_{s}^{t,x},I_{s}^{\alpha}\right)ds-\sum_{\tau_{n}\geq t}k\left(\tau_{n},\zeta_{n}\right)\right]\label{eq:v}
\end{equation}
where:
\begin{itemize}
\item $X^{t,x}=\left(X_{s}^{t,x}\right)_{s\geq t}$ is an $\mathbb{R}^{d}$-valued,
$\mathbb{F}$-adapted markovian diffusion starting from $X_{t}=x\in\mathbb{R}^{d}$,
with generator $\mathcal{L}$.
\item $I^{\alpha}=\left(I_{s}^{\alpha}\right)_{s\geq0}$ is a càd-làg, $\mathbb{R}^{d'}$-valued,
$\mathbb{F}$-adapted piecewise constant process. It is controlled
by a strategy $\alpha$, described below. We suppose it can only take
values into a fixed finite set $\mathbb{I}_{q}=\left\{ i_{1},i_{2},\ldots,i_{q}\right\} $,
$q\in\mathbb{N}^{*}$ with $i_{0}=0$ $\left(\in\mathbb{R}^{d'}\right)$,
which means that equation \eqref{eq:v} corresponds to an optimal
switching problem. 
\item An impulse control strategy $\alpha$ corresponds to a sequence $\left(\tau_{n},\iota_{n}\right)_{n\in\mathbb{N}}$
of increasing stopping times $\tau_{n}\geq0$, and $\mathcal{F}_{\tau_{n}}$-measurable
random variables $\iota_{n}$ valued in $\mathbb{I}_{q}$. Using this
sequence, $I^{\alpha}=\left(I_{s}^{\alpha}\right)_{s\geq0}$ is defined
as follows:
\[
I_{s}^{\alpha}=\iota_{0}\mathbf{1}\left\{ 0\leq s<\tau_{0}\right\} +\sum_{n\in\mathbb{N}}\iota_{n}\mathbf{1}\left\{ \tau_{n}\leq s<\tau_{n+1}\right\} \in\mathbb{I}_{q}
\]
Alternatively, $\alpha$ can be described by the sequence $\left(\tau_{n},\zeta_{n}\right)_{n\in\mathbb{N}}$,
where $\zeta_{n}:=\iota_{n}-\iota_{n-1}$ (and $\zeta_{0}:=0$). Using
this alternative sequence, $I^{\alpha}$ can be written as follows:
\[
I_{s}^{\alpha}=\iota_{0}+\sum_{\tau_{n}\leq s}\zeta_{n}\in\mathbb{I}_{q}
\]

\item $\mathcal{A}$ is the set of admissible strategies: a strategy $\alpha$
belongs to $\mathcal{A}$ if $\tau_{n}\rightarrow+\infty$ a.s. as
$n\rightarrow\infty$.
\item For any $\left(t,i\right)\in\mathbb{R}_{+}\times\mathbb{I}_{q}$,
the set $\mathcal{A}_{t,i}\subset\mathcal{A}$ is defined as the subset
of admissible strategies $\alpha$ such that $I_{t}^{\alpha}=i$.
\item $f$ and $k$ are $\mathbb{R}$-valued measurable functions.
\end{itemize}

\subsection{Assumptions\label{sub:Assumptions}}

We complete the above formulation with the following relevant assumptions.
\begin{assumption}
\label{asm:diffusion}{[}Diffusion{]} The $\mathbb{R}^{d}$-valued
uncontrolled process $X$ is a diffusion process, governed by the
dynamics
\begin{eqnarray}
dX_{s} & = & b\left(s,X_{s}\right)ds+\sigma\left(s,X_{s}\right)dW_{s}\label{eq:SDE-X}\\
X_{0} & = & x_{0}\in\mathbb{R}^{d}\nonumber 
\end{eqnarray}
where $W$ is a $d$-dimensional Brownian motion, and $b$ and $\sigma$
are respectively $\mathbb{R}^{d}$-valued and $\mathbb{R}^{d\times d}$-valued
functions.
\end{assumption}
\smallskip{}

\begin{assumption}
\label{asm:SDE-ExistenceUnicity}{[}Lipschitz{]} The functions $b:\mathbb{R}_{+}\times\mathbb{R}^{d}\rightarrow\mathbb{R}^{d}$
and $\sigma:\mathbb{R}_{+}\times\mathbb{R}^{d}\rightarrow\mathbb{R}^{d\times d}$
are Lipschitz-continuous (uniformly in $t$) with linear growth: $\exists C_{b},C_{\sigma}>0$
s.t. $\forall t\in\mathbb{R}_{+}$, $\forall\left(x,x'\right)\in\left(\mathbb{R}^{d}\right)^{2}$:
\begin{eqnarray*}
\left|b\left(t,x\right)-b\left(t,x'\right)\right| & \leq & C_{b}\left|x-x'\right|\\
\left|b\left(t,x\right)\right| & \leq & C_{b}\left(1+\left|x\right|\right)\\
\left|\sigma\left(t,x\right)-\sigma\left(t,x'\right)\right| & \leq & C_{\sigma}\left|x-x'\right|\\
\left|\sigma\left(t,x\right)\right| & \leq & C_{\sigma}\left(1+\left|x\right|\right)
\end{eqnarray*}

\end{assumption}

\begin{rem}
Assumption \ref{asm:SDE-ExistenceUnicity} is sufficient to prove
the existence and uniqueness of a strong solution to the SDE \eqref{eq:SDE-X}
(see for instance Theorem 4.5.3 in \cite{Kloeden99}).
\end{rem}
\smallskip{}

\begin{rem}
Under Assumption \ref{asm:SDE-ExistenceUnicity}, there exist, for
every $p\geq1$, positive constants $C_{p}$ and $\rho_{p}$ such
that $\forall s\geq t\geq0$ and $\forall x\in\mathbb{R}^{d}$:
\begin{equation}
\mathbb{E}\left[\left|X_{s}^{t,x}\right|^{p}\right]\leq C_{p}\left(1+\left|x\right|^{p}\right)\exp\left(\rho_{p}\left(s-t\right)\right)\label{eq:momentBound}
\end{equation}
(use Burkholder-Davis-Gundy inequality and Gronwall's Lemma, see for
instance \cite{Kloeden99} Theorem 4.5.4 for the even power case).

\smallskip{}

\end{rem}

\begin{assumption}
\label{asm:Lipschitz-f-g}{[}Lipschitz\textup{\&}Discount{]} The functions
$f$ and $k$ decrease exponentially in time: $\exists\rho>0$ s.t.
$\forall\left(t,x,i,j\right)\in\mathbb{R}_{+}\times\mathbb{R}^{d}\times\left(\mathbb{I}_{q}\right)^{2}$:
\begin{eqnarray*}
f\left(t,x,i\right) & = & e^{-\rho t}\tilde{f}\left(t,x,i\right)\\
k\left(t,j-i\right) & = & e^{-\rho t}\tilde{k}\left(t,j-i\right)
\end{eqnarray*}

where the functions $\tilde{f}$ and $\tilde{k}$ are Lipschitz continuous
with linear growth:

$\exists C_{f},C_{k}>0$ s.t. $\forall\left\{ \left(t,x,i,j\right),\left(t',x',i',j'\right)\right\} \in\left\{ \mathbb{R}_{+}\times\mathbb{R}^{d}\times\left(\mathbb{I}_{q}\right)^{2}\right\} ^{2}$:
\begin{eqnarray*}
\left|\tilde{f}\left(t,x,i\right)-\tilde{f}\left(t',x',i'\right)\right| & \leq & C_{f}\left(\left|t-t'\right|+\left|x-x'\right|+\left|i-i'\right|\right)\\
\left|\tilde{f}\left(t,x,i\right)\right| & \leq & C_{f}\left(1+\left|x\right|\right)\\
\left|\tilde{k}\left(t,j-i\right)-\tilde{k}\left(t',j'-i'\right)\right| & \leq & C_{k}\left(\left|t-t'\right|+\left|\left(j-i\right)-\left(j'-i'\right)\right|\right)
\end{eqnarray*}

Moreover, we assume in the following that $\rho>\rho_{1}$ where $\rho_{1}$
is defined in equation \eqref{eq:momentBound}.
\end{assumption}
\smallskip{}

\begin{assumption}
\label{asm:costs}{[}Fixed costs{]} The cost function $k:\mathbb{R}_{+}\times\mathbb{R}^{d'}\rightarrow\mathbb{R}_{+}$
is such that:
\begin{itemize}
\item $\forall t\in\mathbb{R}_{+}$, $k\left(t,0\right)=0$.
\item \textup{$\exists\kappa>0$ s.t. $\forall t\in\mathbb{R}_{+}$, $\forall\left(i,j\right)\in\left(\mathbb{I}_{q}\right)^{2}$},\textup{
}$\left\{ i\neq j\right\} \Rightarrow\left\{ \tilde{k}\!\left(t,j-i\right)\geq\kappa\right\} $.
\item (triangular inequality) $\forall t\in\mathbb{R}_{+}$, $\forall\left(i,j,k\right)\in\left(\mathbb{I}_{q}\right)^{3}$
with $i\neq j$ and $j\neq k$:\textup{\vspace{-0.3em}
}
\[
k\!\left(t,k-i\right)<k\!\left(t,j-i\right)+k\!\left(t,k-j\right)\,.
\]

\end{itemize}
\end{assumption}
\smallskip{}

\begin{rem}
The economic interpretations of Assumption \ref{asm:costs} are the
following:
\begin{enumerate}
\item There is no cost for not switching, but any switch incurs at least
a positive fixed cost.
\item At any given date, it is always cheaper to switch directly from $i$
to $k$ than to switch first from $i$ to $j$ and then from $j$
to $k$.
\end{enumerate}
\end{rem}
\smallskip{}

\begin{rem}
Under those standard assumptions, the value function $v$ is well-defined
and finite. Indeed, using equation \eqref{eq:momentBound}, $\forall\left(t_{0},t,x,i\right)\in\mathbb{R}_{+}\times\mathbb{R}_{+}\times\mathbb{R}^{d}\times\mathbb{R}^{d'}$
with $t_{0}\leq t$ and $\forall\alpha\in\mathcal{A}_{t_{0},i}$:
\begin{eqnarray}
\mathbb{E}\left[\int_{t}^{\infty}\left|f\left(s,X_{s}^{t_{0},x},I_{s}^{\alpha}\right)\right|ds\right] & \leq & C_{f}\int_{t}^{\infty}e^{-\rho s}\left(1+\mathbb{E}\left[\left|X_{s}^{t_{0},x}\right|\right]\right)ds\nonumber \\
 & \leq & C_{f}\left(e^{-\rho t}+\left(1+\left|x\right|\right)\int_{t}^{\infty}e^{-\rho s}e^{\rho_{1}\left(s-t_{0}\right)}ds\right)\nonumber \\
 & \leq & C_{f}\left(1+\left|x\right|\right)e^{-\bar{\rho}t-\rho_{1}t_{0}}\label{eq:ftbound}
\end{eqnarray}
where $\bar{\rho}:=\rho-\rho_{1}>0$ (Assumption \ref{asm:Lipschitz-f-g}).
In particular, the costs being positive (Assumption \ref{asm:costs}),
and recalling \eqref{eq:v}, it holds that:
\begin{equation}
v\left(t,x,i\right)\leq C_{f}\left(1+\left|x\right|\right)e^{-\rho t}\label{eq:vBound}
\end{equation}

\end{rem}

\subsection{Outline of the solution\label{sec:Outline}}

From a theoretical point of view, the value functions $v_{i}:=v\left(.,.,i\right)$,
$i\in\mathbb{I}_{q}$ from equation \eqref{eq:v} are known to satisfy
(under suitable conditions on $f_{i}\left(.,.\right):=f\left(.,.,i\right)$
and $k$, see for instance \cite{Seydel09-2} in a much more general
setting) the following Hamilton-Jacobi-Bellman Quasi-Variational Inequalities
(HJBQVI): $\forall\left(t,x,i\right)\in\mathbb{R}_{+}\times\mathbb{R}^{d}\times\mathbb{I}_{q}$
\begin{equation}
\min\left\{ -\frac{\partial v_{i}}{\partial t}\left(t,x\right)-\mathcal{L}v_{i}\left(t,x\right)-f_{i}\left(t,x\right)\,,\, v_{i}\left(t,x\right)-\max_{j\in\mathbb{I}_{P},\, j\neq i}\left(v_{j}\left(t,x\right)-k\left(t,j-i\right)\right)\right\} =0\label{eq:HJBQVI}
\end{equation}
together with suitable limit condition.

Alternatively, the process $v\left(t,X_{t},i\right)$, $t\geq0$ can
be characterized as the solution of a particular Reflected Backward
Stochastic Differential Equation (\cite{Hamadene99,ElAsri10}).

Moreover, the value function \eqref{eq:v} satisfies the well-known
dynamic programming principle, i.e., for any stopping time $\tau\geq t$:
\begin{equation}
v\left(t,x,i\right)=\sup_{\alpha\in\mathcal{A}_{t,i}}\mathbb{E}\left[\int_{t}^{\tau}f\left(s,X_{s}^{t,x},I_{s}^{\alpha}\right)ds-\sum_{t\leq\tau_{n}\leq\tau}k\left(\tau_{n},\zeta_{n}\right)+v\left(\tau,X_{\tau}^{t,x},I_{\tau}^{\alpha}\right)\right]\,.\label{eq:DPP}
\end{equation}
From a practical point of view, apart from a few simple examples in
low-dimension, finding directly the solution of the HJBQVI \eqref{eq:HJBQVI}
is usually infeasible, and the numerical PDE tools become cumbersome
and inefficient in the multi-dimensional setting. Instead, probabilistic
methods based on \eqref{eq:DPP}, in the spirit of \cite{Carmona08},
are usually more practical and versatile.

Indeed, as the diffusion $X$ is not controlled, this optimal switching
problem can be seen as an extended American option problem. This suggests
that, up to some adjustments, the probabilistic numerical tools developed
in this context (see \cite{Bouchard11} for instance) may be adapted
to solve \eqref{eq:v}.

To be more specific, consider a variant $\hat{v}$ of \eqref{eq:v}
such that the switching decisions can only take place on a finite
time grid $\Pi=\left\{ t_{0}=0<t_{1}<\ldots<t_{m}=T\right\} $ for
a fixed $T>0$. Then $\forall i\in\mathbb{I}_{q}$ , $\forall x\in\mathbb{R}^{d}$,
and $\forall t_{k}\in\Pi$, the dynamic programming principle \eqref{eq:DPP}
becomes:
\begin{align}
\hat{v}_{i}\left(t_{k},x\right) & =\max\left\{ E_{i}\left(t_{k},x\right),\max_{j\in\mathbb{I}_{q},\, j\neq i}\left\{ \hat{v}_{j}\left(t_{k},x\right)-k\left(t_{k},j-i\right)\right\} \right\} \label{eq:DPP-approx-1}
\end{align}
where:
\begin{align}
E_{i}\left(T,x\right) & :=\mathbb{E}\left[\int_{T}^{\infty}f_{i}\left(s,X_{s}^{T,x}\right)dt\right]\label{eq:DPP-Final}\\
E_{i}\left(t_{k},x\right) & :=\mathbb{E}\left[\int_{t_{k}}^{t_{k+1}}f_{i}\left(s,X_{s}^{t_{k},x}\right)dt+\hat{v}_{i}\left(t_{k+1},X_{t_{k+1}}^{t_{k},x}\right)\right]\,,\,\, k=m-1,\ldots,0\label{eq:DPP-CondExp}
\end{align}
and where the notation $X^{t,x}:=\left(X_{s}^{t,x}\right)_{s\geq t}$
refers to the process $X$ conditioned on the initial value $X_{t}=x$.

If, moreover, the cost function $k$ is such that at most one switch
can occur on a given date $t_{k}$ (triangular condition), then equation
\eqref{eq:DPP-approx-1} can be simplified into:
\begin{align}
\hat{v}_{i}\left(t_{k},x\right) & =\max_{j\in\mathbb{I}_{q}}\left\{ E_{j}\left(t_{k},x\right)-k\left(t_{k},j-i\right)\mathbf{1}_{\left\{ j\neq i\right\} }\right\} \label{eq:DPP-approx-2}
\end{align}
which is explicit in the sense that $\hat{v}_{.}\left(t_{k},.\right)$
directly depends on $\hat{v}_{.}\left(t_{k+1},.\right)$.

In practice, apart from the potential approximation of the stochastic
process $X$ and of the final values \eqref{eq:DPP-Final}, the difficulty
lies in the efficient computation of the conditional expectations
\eqref{eq:DPP-CondExp}.

In the American option literature, various approaches have been developed
to solve \eqref{eq:DPP-approx-2} efficiently. Notable examples are
the least-squares' approach (\cite{Longstaff01,Tsitsiklis01}), the
quantization approach and the Malliavin calculus based formulation
(see \cite{Bouchard11} for a thorough comparison and improvements
of these techniques). In the spirit of \cite{Carriere96}, one may
also consider non-parametric regression (see \cite{Kohler10} and
\cite{Todorovic07}) combined with speeding up techniques like Kd-trees
(\cite{Gray03,Lang05}) or the Fast Gauss Transform (\cite{Yang03,Morariu09,Raykar10,Spivak10,Sampath10})
in the case of kernel regression.

Here, we intend to solve \eqref{eq:v} on numerical applications which
bears the particularity of handling stochastic processes in high dimension
($\dim\left(X\right)=d\gg3$, with however $\dim\left(I\right)=d'\approx3$,
see Section \ref{sec:Application}). For such problems, the most adequate
technique so far seems to be the local regression method developed
in \cite{Bouchard11}. We are thus going to make use of this specific
method to solve \eqref{eq:DPP-approx-2} in practice.

In the following, we provide a detailed analysis of the above suggested
computational method.

\section{Numerical approximation and convergence analysis\label{sub:Convergence}}

This section is devoted to the precise description of the resolution
of \eqref{eq:v}, along the lines of the discussions from Subsection
\ref{sec:Outline}. Moreover, the convergence rate of the proposed
algorithm will be precisely assessed.

\subsection{Approximations}

Recall equation \eqref{eq:v} defining the value function $v\left(t,x,i\right)$
:
\begin{equation}
v\left(t,x,i\right)=\sup_{\alpha\in\mathcal{A}_{t,i}}\mathbb{E}\left[\int_{t}^{\infty}f\left(s,X_{s}^{t,x},I_{s}^{\alpha}\right)ds-\sum_{\tau_{n}\geq t}k\left(\tau_{n},\zeta_{n}\right)\right]\label{eq:v-1}
\end{equation}
We are going to consider the following sequence of approximations:
\begin{itemize}
\item \textit{{[}Finite time horizon{]}} The time horizon will be truncated
to a finite horizon $T$.
\item \textit{{[}Time discretization{]}} The continuous state process $X$
and investment process $I$ will be discretized with a time step $h$.
\item \textit{{[}Space localization{]}} The $\mathbb{R}^{d}$- valued process
$X$ will be projected into a bounded domain $\mathcal{D}_{\varepsilon}$,
parameterized by $\varepsilon$.
\item \textit{{[}Conditional expectation approximation{]}} The conditional
expectation involved in the dynamic programming equation will be replaced
by an empirical least-squares regression, computed on a bundle of
$M$ Monte Carlo trajectories, on a finite basis of local hypercubes
with edges of size $\delta$.
\end{itemize}
The rate of convergence of the algorithm will then be provided, as
a function of these five numerical parameters: $T$, $h$, $\varepsilon$,
$M$ and $\delta$.

\subsubsection{Finite time horizon}

The first step is to reduce the set of strategies to a finite horizon:
\begin{eqnarray}
v_{T}\left(t,x,i\right) & = & \sup_{\alpha\in\mathcal{A}_{t,i}^{T}}\mathbb{E}\left[\int_{t}^{T}f\left(s,X_{s}^{t,x},I_{s}^{\alpha}\right)ds-\sum_{t\leq\tau_{n}\leq T}k\left(\tau_{n},\zeta_{n}\right)+g_{f}\left(T,X_{T}^{t,x},I_{T}^{\alpha}\right)\right]\label{eq:v-2}\\
g_{f}\left(T,x,i\right) & := & \mathbb{E}\left[\int_{T}^{\infty}f\left(s,X_{s}^{T,x},i\right)ds\right]
\end{eqnarray}
where $0\leq t\leq T<+\infty$, and $\mathcal{A}_{t,i}^{T}\subset\mathcal{A}_{t,i}$
is the subset of strategies without switches strictly after time $T$.
Hence the final value $g_{f}$ corresponds to the remaining gain after
$T$.

Alternatively, one may choose, for convenience, another final value
$g$ instead of $g_{f}$, as long as it is Lipschitz-continuous and
satisfies a suitable condition (cf. equation \eqref{eq:gfInequality}).
The set of such functions will be denoted as $\Theta_{g_{f}}$. The
difference between the two value functions is quantified in Proposition
\ref{pro:ErrorControl-v1-v2}.

This freedom on the final values will be used in practice to avoid
a computation on an infinite interval $\left[T,\infty\right[$ as
in the definition of $g_{f}$.

From now on, we choose and fix one such $g\in\Theta_{g_{f}}$.

\subsubsection{Time discretization}

Then, we discretize the time segment $\left[0,T\right]$. Introduce
a time grid $\Pi=\left\{ t_{0}=0<t_{1}<\ldots<t_{N}=T\right\} $ with
constant mesh $h$. Consider the following approximation:
\begin{equation}
v_{\Pi}\left(t,x,i\right)=\sup_{\alpha\in\mathcal{A}_{t,i}^{\Pi}}\mathbb{E}\left[\int_{t}^{T}f\left(s,X_{s}^{t,x},I_{s}^{\alpha}\right)ds-\sum_{t\leq\tau_{n}\leq T}k\left(\tau_{n},\zeta_{n}\right)+g\left(T,X_{T}^{t,x},I_{T}^{\alpha}\right)\right]\label{eq:v-3}
\end{equation}
where $\mathcal{A}_{t,i}^{\Pi}\subset\mathcal{A}_{t,i}^{T}$ is the
subset of strategies such that switches can only occur at dates $\tau_{n}\in\Pi\cap\left[t,T\right]$.

Now, with a slight abuse of notation, we can safely switch from the
notation $\alpha=\left(\tau_{n},\zeta_{n}\right)_{n\geq0}$ to the
notation $\alpha=\left(\tau_{n},\iota_{n}\right)_{n\geq0}$ (remember
Subsection \ref{sub:Formulation}), replacing the quantity $\sum_{t\leq\tau_{n}\leq T}k\left(\tau_{n},\zeta_{n}\right)$
by $\sum_{t\leq\tau_{n}\leq T}k\left(\tau_{n},I_{\tau_{n}-h}^{\alpha},I_{\tau_{n}}^{\alpha}\right)$
or by $\sum_{t\leq\tau_{n}\leq T}k\left(\tau_{n},\iota_{n-1},\iota_{n}\right)$,
where $k\left(t,i,j\right)=k(t,j-i)$. The error between $v_{T}$
and $v_{\Pi}$ is quantified in Proposition \ref{pro:Time-Convergence}.

Next we also approximate the stochastic process $X$ by its Euler
scheme $\bar{X}=\left(\bar{X}_{s}\right)_{0\leq s\leq T}$, with dynamics:
\begin{eqnarray}
d\bar{X}_{s} & = & b\left(\pi\left(s\right),\bar{X}_{\pi\left(s\right)}\right)ds+\sigma\left(\pi\left(s\right),\bar{X}_{\pi\left(s\right)}\right)dW_{s}\,,\,\,0\leq s\leq T\label{eq:SDE-Euler}\\
\bar{X}_{0} & = & x_{0}\in\mathbb{R}^{d}\nonumber 
\end{eqnarray}
where $\forall s\in\left[0,T\right]$, $\pi\left(s\right):=\max\left\{ t\in\Pi;t\leq s\right\} $.
The new value function reads:
\begin{equation}
\bar{v}_{\Pi}\left(t,x,i\right)=\sup_{\alpha\in\mathcal{A}_{t,i}^{\Pi}}\mathbb{E}\left[\int_{t}^{T}f\left(\pi\left(s\right),\bar{X}_{s}^{t,x},I_{s}^{\alpha}\right)ds-\sum_{t\leq\tau_{n}\leq T}k\left(\tau_{n},\iota_{n-1},\iota_{n}\right)+g\left(T,\bar{X}_{T}^{t,x},I_{T}^{\alpha}\right)\right]\label{eq:v-4}
\end{equation}
The error between $v_{\Pi}$ and $\bar{v}_{\Pi}$ is computed in Proposition
\ref{pro:Euler-Convergence}.

\subsubsection{Space localization \label{sub:SpaceLocIntro}}

Fix $\varepsilon>0$. $\forall t\in\left[0,T\right]$, let $\mathcal{D}_{t}^{\varepsilon}$
be a bounded convex domain of $\mathbb{R}^{d}$. In particular there
exists $C\left(t,\varepsilon\right)>0$ such that $\forall x\in\mathcal{D}_{t}^{\varepsilon}$,
$\left|x\right|\leq C\left(t,\varepsilon\right)$. Let $\mathcal{P}_{t}^{\varepsilon}:\mathbb{R}^{d}\rightarrow\mathbb{R}^{d}$
denote the projection on $\mathcal{D}_{t}^{\varepsilon}$. This domain
is chosen such that $\forall s\in\left[0,t\right]$,
\begin{equation}
\mathbb{E}\left[\left|\bar{X}_{s}-\mathcal{P}_{t}^{\varepsilon}\left(\bar{X}_{s}\right)\right|\right]\leq\varepsilon\,.\label{eq:EpsilonDef}
\end{equation}
Denote this projection as $\bar{X}_{t}^{\varepsilon}$ :
\[
\bar{X}_{t}^{\varepsilon}:=\mathcal{P}_{t}^{\varepsilon}\left(\bar{X}_{t}\right)
\]
In other words, $\bar{X}_{t}^{\varepsilon}$ is equal to $\bar{X}_{t}$
most of the time (i.e. when $\bar{X}_{t}\in\mathcal{D}_{t}^{\varepsilon}$),
except when $\bar{X}_{t}$ is outside $\mathcal{D}_{t}^{\varepsilon}$,
in which case $\bar{X}_{t}^{\varepsilon}$ corresponds to the projection
of $\bar{X}_{t}$ onto $\mathcal{D}_{t}^{\varepsilon}$.

Define $\bar{v}_{\Pi}^{\varepsilon}$ as the value function $\bar{v}_{\Pi}$
from equation \eqref{eq:v-4} with $\bar{X}$ replaced by $\bar{X}^{\varepsilon}$.
The error between those two value functions is computed in Proposition
\ref{pro:SpaceLocalization}. 
\begin{example}
\label{ex:Brownian-Loc}To clarify this construction of space localization,
we explicit it on the very simple example of a $d$-dimensional standard
brownian motion $\left(W_{t}\right)_{t\in\left[0,T\right]}$. In this
case, $\bar{X}_{t}=X_{t}=W_{t}$. Choose $\mathcal{D}_{t}^{\varepsilon}$
to be a centered, symmetric hypercube: $\mathcal{D}_{t}^{\varepsilon}=\left[-C\left(t,\varepsilon\right),C\left(t,\varepsilon\right)\right]^{d}$
for some constant $C\left(t,\varepsilon\right)$. Hence, $\forall x\in\mathbb{R}^{d}$,
$\mathcal{P}_{t}^{\varepsilon}\left(x\right):=-C\left(t,\varepsilon\right)\wedge x\vee C\left(t,\varepsilon\right)$
component-wise. With this expressions, one can find a $C\left(t,\varepsilon\right)$
such that \eqref{eq:EpsilonDef} holds. Indeed, $\forall s\in\left[0,T\right]$:
\begin{equation}
\mathbb{E}\left[\left|W_{s}-\mathcal{P}_{t}^{\varepsilon}\left(W_{s}\right)\right|\right]\leq\mathbb{E}\left[\left|W_{t}-\mathcal{P}_{t}^{\varepsilon}\left(W_{t}\right)\right|\right]=\mathbb{E}\left[\left|W_{t}-C\left(t,\varepsilon\right)\right|\mathbf{1}_{\left\{ \left|W_{t}\right|>C\left(t,\varepsilon\right)\right\} }\right]=2^{d}\mathbb{E}\left[\left(W_{t}^{1}-C\left(t,\varepsilon\right)\right)^{+}\right]^{d}\label{eq:Bachelier}
\end{equation}
where $W^{1}$ is a one-dimensional Brownian motion. Hence, finding
a value for $C\left(t,\varepsilon\right)$ such that \eqref{eq:EpsilonDef}
holds boils down to inverting Bachelier's option pricing formula in
order to get the strike as a function of the price of the call option.
This is done in \cite{Bachelier00}, see \cite{Schachermayer08},
but under the form of a series expansion for small moneyness, which
is unsuitable for our purpose (because $C\left(t,\varepsilon\right)\rightarrow\infty$
when $\varepsilon\rightarrow0$). Thus, we are here only going to
look for a simply invertible upper bound for \eqref{eq:Bachelier}.
Denoting as $\mathcal{N}$ the cumulative distribution function of
a standard Gaussian random variable, and using the standard inequality
$1-\mathcal{N}\left(x\right)\geq\frac{1}{\sqrt{2\pi}}\frac{x}{x^{2}+1}e^{-\frac{x^{2}}{2}}$:
\begin{eqnarray*}
\mathbb{E}\left[\left(W_{t}^{1}-K\right)^{+}\right] & = & \int_{\frac{K}{\sqrt{t}}}^{+\infty}\left(x\sqrt{t}-K\right)\mathcal{N}'\left(x\right)dx=\frac{\sqrt{t}}{\sqrt{2\pi}}e^{-\frac{K^{2}}{2t}}-K\left(1-\mathcal{N}\left(\frac{K}{\sqrt{t}}\right)\right)\\
 & \leq & \frac{\sqrt{t}}{\sqrt{2\pi}}\left(1+\frac{K^{2}}{K^{2}+t}\right)e^{-\frac{K^{2}}{2t}}\leq\frac{2\sqrt{t}}{\sqrt{2\pi}}e^{-\frac{K^{2}}{2t}}
\end{eqnarray*}
Inverting this last upper bound, the inequality \eqref{eq:EpsilonDef}
is satisfied with $C\left(t,\varepsilon\right)=\sqrt{t\ln\left(\frac{8t}{\pi\varepsilon^{\frac{2}{d}}}\right)}$.
\end{example}

\subsubsection{Conditional expectation approximation\label{sub:CondExpIntro}}

From now on, in order to prevent the notation from becoming too cumbersome
and clumsy, we are going to drop the $\varepsilon$ index in the following,
i.e. $\bar{X}_{t}$ will stand for $\bar{X}_{t}^{\varepsilon}$, and
$\bar{v}_{\Pi}$ for $\bar{v}_{\Pi}^{\varepsilon}$.

For the fully discretized problem \eqref{eq:v-4}, the dynamic programming
principle \eqref{eq:DPP-approx-2} becomes:
\begin{align}
\bar{v}_{\Pi}\left(T,x,i\right) & =g\left(T,x,i\right)\nonumber \\
\bar{v}_{\Pi}\left(t_{n},x,i\right) & =\max_{j\in\mathbb{I}_{q}}\left\{ hf\left(t_{n},x,j\right)-k\left(t_{n},i,j\right)+\mathbb{E}\left[\bar{v}_{\Pi}\left(t_{n+1},\bar{X}_{t_{n+1}}^{t_{n},x},j\right)\right]\right\} \,,n=N-1,\ldots,0\label{eq:DDP-approx-3}
\end{align}

The last step is to approximate the conditional expectation appearing
in equation \eqref{eq:DDP-approx-3}. As discussed in Subsection \ref{sec:Outline},
we choose to approximate it using the following regression procedure.
Consider basis functions $\left(e_{k}\left(x\right)\right)_{1\leq k\leq K}$,
$K\in\mathbb{N}\cup\left\{ +\infty\right\} $, $x\in\mathbb{R}^{d}$.
For suitable functions $\varphi:\Pi\times\mathbb{R}^{d}\times\mathbb{I}_{q}\rightarrow\mathbb{R}$,
define:
\begin{equation}
\tilde{\lambda}=\tilde{\lambda}_{i}^{t_{n}}\left(\varphi\right):=\arg\min_{\lambda\in\mathbb{R}^{K}}\mathbb{E}\left[\left(\varphi\left(t_{n+1},\bar{X}_{t_{n+1}},i\right)-\sum_{k=1}^{K}\lambda_{k}e_{k}\left(\bar{X}_{t_{n}}\right)\right)^{2}\right]\label{eq:regr0}
\end{equation}
As truncating the approximated conditional expectations is a necessity
in theory as well as in practice (see \cite{Bouchard04,Gobet05,Tan11}),
suppose that there exist known bounds $\underline{\Gamma}^{t_{n},x}\left(\text{\ensuremath{\varphi}}\right)$
and $\overline{\Gamma}^{t_{n},x}\left(\varphi\right)$ on $\mathbb{E}\left[\varphi\left(t_{n+1},\bar{X}_{t_{n+1}}^{t_{n},x},i\right)\right]$
:
\begin{equation}
\underline{\Gamma}^{t_{n},x}\left(\text{\ensuremath{\varphi}}\right)\leq\mathbb{E}\left[\varphi\left(t_{n+1},\bar{X}_{t_{n+1}}^{t_{n},x},i\right)\right]\leq\overline{\Gamma}^{t_{n},x}\left(\varphi\right)\label{eq:condExpBounds}
\end{equation}
Then, $\forall i\in\mathbb{I}_{q}$ the quantity $\mathbb{E}\left[\varphi\left(t_{n+1},\bar{X}_{t_{n+1}}^{t_{n},x},i\right)\right]$
is approximated by:
\begin{equation}
\tilde{\mathbb{E}}\left[\varphi\left(t_{n+1},\bar{X}_{t_{n+1}}^{t_{n},x},i\right)\right]:=\underline{\Gamma}^{t_{n},x}\left(\text{\ensuremath{\varphi}}\right)\vee\sum_{k=1}^{K}\tilde{\lambda}_{k}e_{k}\left(x\right)\wedge\overline{\Gamma}^{t_{n},x}\left(\varphi\right)\label{eq:condExp0}
\end{equation}
which is used to define the next approximation $\tilde{v}_{\Pi}$
of the value function:
\begin{eqnarray}
\tilde{v}_{\Pi}\left(T,x,i\right) & = & g\left(T,x,i\right)\nonumber \\
\tilde{v}_{\Pi}\left(t_{n},x,i\right) & = & \max_{j\in\mathbb{I}_{q}}\left\{ hf\left(t_{n},x,j\right)-k\left(t_{n},i,j\right)+\tilde{\mathbb{E}}\left[\tilde{v}_{\Pi}\left(t_{n+1},\bar{X}_{t_{n+1}}^{t_{n},x},j\right)\right]\right\} \,,\,\, n=N-1,\ldots,0\label{eq:DDP-approx-4}
\end{eqnarray}

Interesting discussions on the choice of function basis can be found
in \cite{Bouchard11}. In particular they advocate bases of local
polynomials, which is numerically efficient and well-suited to tackle
large-dimensional problems (see Subsection \ref{sub:Complexity}).
However, for the sake of simplicity, we will restrict our study in
this section to a basis of indicator functions on local hypercubes
(as in \cite{Tan11} and \cite{Gobet05}) (which is the simplest example
of local polynomials) as defined below:

For every $t_{n}\in\Pi$, consider a partition of the domain $\mathcal{D}_{t_{n}}^{\varepsilon}$
into hypercubes $\left(B_{t_{n}}^{k}\right)_{k=1,\ldots,K_{\varepsilon}}$,
i.e., $\cup_{k=1,\ldots,K_{\varepsilon}}B_{t_{n}}^{k}=\mathcal{D}_{t_{n}}^{\varepsilon}$
and $B_{t_{n}}^{i}\cap B_{t_{n}}^{j}=\emptyset$ $\forall i\neq j$.
It may be deterministic, or $\mathcal{F}_{t}$-measurable. We only
assume that there exists $\left(\underline{\delta},\delta\right)\in\mathbb{R}_{+}^{2}$
with $\underline{\delta}\leq\delta$ such that the lengths of the
edges of the hypercubes, in each dimension, belong to $\left[\underline{\delta},\delta\right]$
(in particular, the volume of each hypercube $B_{t_{n}}^{k}$ belongs
to $\left[\underline{\delta}^{d},\delta^{d}\right]$). This liberty
over the definition of the partition enables to encompass the kind
of adaptative partition described in \cite{Bouchard11}. Then, the
basis functions considered here are defined by $e_{t_{n}}^{k}\left(x\right):=\mathbf{1}\left\{ x\in B_{t_{n}}^{k}\right\} $,
$x\in\mathbb{R}^{d}$, $1\leq k\leq K_{\varepsilon}$.

With this choice of function basis, the error between $\bar{v}_{\Pi}$
and $\tilde{v}_{\Pi}$ is computed in Proposition \ref{pro:RegConv}.

Finally, let $\left(\bar{X}_{t_{n}}^{m}\right)_{1\leq n\leq N}^{1\leq m\leq M}$
be a finite sample of size $M$ of paths of the process $\bar{X}$.
The final step is to replace the regression \eqref{eq:regr0} by a
regression on this sample:
\begin{equation}
\hat{\lambda}=\hat{\lambda}_{i}^{t_{n}}\left(\varphi\right):=\arg\min_{\lambda\in\mathbb{R}^{K}}\frac{1}{M}\sum_{m=1}^{M}\left[\left(\varphi\left(t_{n+1},\bar{X}_{t_{n+1}}^{m},i\right)-\sum_{k=1}^{K}\lambda_{k}e_{k}\left(\bar{X}_{t_{n}}^{m}\right)\right)^{2}\right]\,.\label{eq:regrM}
\end{equation}
Then $\forall i\in\mathbb{I}_{q}$ the quantity $\mathbb{E}\left[\varphi\left(t_{n+1},\bar{X}_{t_{n+1}}^{t_{n},x},i\right)\right]$
is approximated by:
\begin{equation}
\hat{\mathbb{E}}\left[\varphi\left(t_{n+1},\bar{X}_{t_{n+1}}^{t_{n},x},i\right)\right]:=\underline{\Gamma}^{t_{n},x}\left(\text{\ensuremath{\varphi}}\right)\vee\sum_{k=1}^{K}\hat{\lambda}_{k}e_{k}\left(x\right)\wedge\overline{\Gamma}^{t_{n},x}\left(\varphi\right)\label{eq:condExpM}
\end{equation}
leading to the final, computable approximation $\hat{v}_{\Pi}$ of
the value function:
\begin{eqnarray}
\hat{v}_{\Pi}\left(T,x,i\right) & = & g\left(T,x,i\right)\nonumber \\
\hat{v}_{\Pi}\left(t_{n},x,i\right) & = & \max_{j\in\mathbb{I}_{q}}\left\{ hf\left(t_{n},x,j\right)-k\left(t_{n},i,j\right)+\hat{\mathbb{E}}\left[\hat{v}_{\Pi}\left(t_{n+1},\bar{X}_{t_{n+1}}^{t_{n},x},j\right)\right]\right\} \,,\,\, n=N-1,\ldots,0\label{eq:DDP-approx-5}
\end{eqnarray}

The error between $\tilde{v}_{\Pi}$ and $\hat{v}_{\Pi}$ with the
same choice of function basis is given in Proposition \ref{pro:RegMConv}.
This proposition will make use of the following quantity:
\begin{eqnarray}
p\left(t_{n},\delta,\varepsilon\right) & := & \min_{t\in\Pi\cap\left[0,t_{n}\right]}\min_{B_{t}^{k}\subset\mathcal{D}_{t}^{\varepsilon}}\mathbb{P}\left(\bar{X}_{t}\in B_{t}^{k}\right)\label{eq:probaMin}
\end{eqnarray}
which is strictly positive, as the domains $\mathcal{D}_{t}^{\varepsilon}$,
$t\in\left[0,T\right]$ are bounded. More precisely, only lower bounds
of these quantities will be required.
\begin{example}
\label{ex:Brownian-Prob}Carrying on with Example \ref{ex:Brownian-Loc}
of a $d$-dimensional Brownian motion, we explicit a lower bound for
$p\left(t_{n},\delta,\varepsilon\right)$ in this simple case. First,
$\mathbb{P}\left(W_{t}\in B_{t_{n}}^{k}\right)=\int_{B_{t_{n}}^{k}}f_{W_{t}}\left(x\right)dx$
where $f_{W_{t}}$ is the density of $W_{t}$. As $\forall k$, $B_{t_{n}}^{k}\subset\mathcal{D}_{t_{n}}^{\varepsilon}$,
where in this example $\mathcal{D}_{t}^{\varepsilon}=\left[-C\left(t,\varepsilon\right),C\left(t,\varepsilon\right)\right]^{d}$
with $C\left(t,\varepsilon\right)=\sqrt{t\ln\left(\frac{8t}{\pi\varepsilon^{\frac{2}{d}}}\right)}$,
it holds that $\forall x\in\mathcal{D}_{t}^{\varepsilon}$, $f_{W_{t}}\left(x\right)\geq\left(f_{W_{t}^{1}}\left(C\left(t,\varepsilon\right)\right)\right)^{d}=\frac{\varepsilon}{\left(4t\right)^{d}}$.
Hence $\mathbb{P}\left(W_{t}\in B_{t}^{k}\right)\geq\frac{\varepsilon}{\left(4t\right)^{d}}\mathrm{Vol}\left(B_{t}^{k}\right)\geq\frac{\varepsilon}{\left(4t\right)^{d}}\underbar{\ensuremath{\delta}}^{d}$.
As a conclusion, $p\left(t_{n},\delta,\varepsilon\right)\geq\frac{\varepsilon}{\left(4t_{n}\right)^{d}}\underbar{\ensuremath{\delta}}^{d}$
. Remark however that this lower bound is very crude, and that it
can be very far below $p\left(t_{n},\delta,\varepsilon\right)$ for
large $\delta$.
\end{example}
Combining all these results, we obtain a rate of convergence of $\hat{v}_{\Pi}$
towards $v$:\smallskip{}

\begin{thm}
\label{Th:CONVERGENCERATE}$\forall p\geq1$ , $\exists C_{p}>0$
such that:
\begin{eqnarray*}
 &  & \left\Vert \max_{i\in\mathbb{I}_{q}}\left|v\left(t_{0},x_{0},i\right)-\hat{v}_{\Pi}\left(t_{0},x_{0},i\right)\right|\right\Vert _{L_{p}}\\
 & \leq & C_{p}\left\{ \left(1+\left|x_{0}\right|\right)e^{-\bar{\rho}T}+\left(1+\left|x_{0}\right|\right)^{\frac{3}{2}}\sqrt{h}+\varepsilon+\frac{\delta}{h}+\frac{1+C\left(T,\varepsilon\right)}{h\sqrt{M}p\left(T,\delta,\varepsilon\right)^{1-\frac{1}{p\vee2}}}+\frac{1+C\left(T,\varepsilon\right)}{hMp\left(T,\delta,\varepsilon\right)}\right\} 
\end{eqnarray*}
In particular, $\hat{v}_{\Pi}\left(0,x_{0},i\right)\rightarrow_{L_{p}}v\left(0,x_{0},i\right)$
uniformly in $i\in\mathbb{I}_{q}$ when $T\rightarrow\infty$, $h\rightarrow0$,
$\varepsilon\rightarrow0$, $\delta\rightarrow0$ and $M\rightarrow\infty$
with $\frac{\delta}{h}\rightarrow0$, $\frac{1+C\left(T,\varepsilon\right)}{h\sqrt{M}p\left(T,\delta,\varepsilon\right)^{1-\frac{1}{p\vee2}}}\rightarrow0$
and $\frac{1+C\left(T,\varepsilon\right)}{hMp\left(T,\delta,\varepsilon\right)}\rightarrow0$.
\end{thm}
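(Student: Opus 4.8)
The proof of Theorem \ref{Th:CONVERGENCERATE} is a straightforward telescoping argument. The plan is to decompose the global error through the chain of intermediate value functions introduced in the successive approximation steps,
\[
v \;\longrightarrow\; v_T \;\longrightarrow\; v_\Pi \;\longrightarrow\; \bar v_\Pi \;\longrightarrow\; \bar v_\Pi^\varepsilon \;\longrightarrow\; \tilde v_\Pi \;\longrightarrow\; \hat v_\Pi,
\]
and to apply the triangle inequality for the $L_p$-norm:
\begin{align*}
\big\| \max_{i\in\mathbb{I}_q} | v(t_0,x_0,i) - \hat v_\Pi(t_0,x_0,i) | \big\|_{L_p}
\;\leq\;& \big\| \max_i | v - v_T | \big\|_{L_p} + \big\| \max_i | v_T - v_\Pi | \big\|_{L_p}\\
& {}+ \big\| \max_i | v_\Pi - \bar v_\Pi | \big\|_{L_p} + \big\| \max_i | \bar v_\Pi - \bar v_\Pi^\varepsilon | \big\|_{L_p}\\
& {}+ \big\| \max_i | \bar v_\Pi^\varepsilon - \tilde v_\Pi | \big\|_{L_p} + \big\| \max_i | \tilde v_\Pi - \hat v_\Pi | \big\|_{L_p},
\end{align*}
all evaluated at $(t_0,x_0)=(0,x_0)$. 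Each of the six terms is controlled by one of the Propositions announced in the text: the finite-horizon truncation error by Proposition \ref{pro:ErrorControl-v1-v2} (giving the $(1+|x_0|)e^{-\bar\rho T}$ term, since $\bar\rho=\rho-\rho_1$ and $t_0=0$), the time-discretization of the switching times by Proposition \ref{pro:Time-Convergence}, the Euler-scheme error by Proposition \ref{pro:Euler-Convergence} (together producing the $(1+|x_0|)^{3/2}\sqrt h$ term), the space-localization error by Proposition \ref{pro:SpaceLocalization} (the $\varepsilon$ term), the local-hypercube regression bias by Proposition \ref{pro:RegConv} (the $\delta/h$ term), and finally the Monte-Carlo regression error by Proposition \ref{pro:RegMConv} (the two terms involving $M$, $C(T,\varepsilon)$ and $p(T,\delta,\varepsilon)$). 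Summing and absorbing all constants into a single $C_p$ yields the stated bound.

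The only genuinely non-routine point is bookkeeping the evaluation time: several of the intermediate bounds are stated, or most naturally proved, with an explicit dependence on the current time $t_n$ (e.g. a factor $e^{-\rho t_n}$ or $e^{\rho_1 t_n}$, or the time-dependent localization radius $C(t_n,\varepsilon)$ and probability $p(t_n,\delta,\varepsilon)$), whereas the final statement is at $t_0=0$ and uses the terminal quantities $C(T,\varepsilon)$ and $p(T,\delta,\varepsilon)$. One must therefore check that, when the per-step errors are propagated backward through the $N=T/h$ dynamic-programming steps \eqref{eq:DDP-approx-5}, the accumulation of $N$ local contributions of size $O(h)$ does not blow up: this is exactly where the discounting Assumption \ref{asm:Lipschitz-f-g} ($\rho>\rho_1$, hence $\bar\rho>0$) is used, so that the geometric series of discount factors $\sum_n e^{-\bar\rho t_n} h$ stays bounded uniformly in $h$ and $T$, turning a naive $N\times O(h)=O(T)$ bound into an $O(1)$ one, and that the worst localization radius and the smallest cell-probability over $t\in\Pi\cap[0,T]$ are the ones at $t=T$ (monotonicity of $t\mapsto C(t,\varepsilon)$ and $t\mapsto p(t,\delta,\varepsilon)^{-1}$, as in Examples \ref{ex:Brownian-Loc} and \ref{ex:Brownian-Prob}). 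The $1/h$ factors in front of the regression terms likewise arise from this propagation: a regression error committed at each of the $N$ time steps is amplified by at most a constant through the Lipschitz max-operations, but since the error per step is measured against the integrand scaled by $h$, dividing out gives the $1/h$ prefactor.

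A subsidiary point worth stating carefully is the passage from the pointwise bounds of the Propositions to the $L_p$ statement and the handling of $\max_{i\in\mathbb{I}_q}$: since $\mathbb{I}_q$ is finite and $|\max_i a_i - \max_i b_i|\leq \max_i |a_i-b_i|$, the maximum commutes harmlessly with every estimate, and the only randomness in the final error comes from the Monte-Carlo sample $(\bar X^m_{t_n})$ entering $\hat v_\Pi$ through \eqref{eq:regrM}–\eqref{eq:DDP-approx-5}; hence only the last term $\|\max_i|\tilde v_\Pi-\hat v_\Pi|\|_{L_p}$ is truly an $L_p$-norm of a nondegenerate random variable, the other five being deterministic. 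The concluding convergence assertion is then immediate: under the stated joint limit $T\to\infty$, $h\to0$, $\varepsilon\to0$, $\delta\to0$, $M\to\infty$ with $\delta/h\to0$ and the two Monte-Carlo ratios tending to $0$, every term on the right-hand side vanishes, giving $\hat v_\Pi(0,x_0,i)\to_{L_p} v(0,x_0,i)$ uniformly in $i$. I expect no real obstacle beyond this careful tracking of time-dependent constants through the backward recursion; the heavy lifting is entirely contained in the six Propositions, which the present theorem merely assembles.
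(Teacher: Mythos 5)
Your proposal is correct and follows exactly the paper's own argument: the theorem is proved by combining Propositions \ref{pro:ErrorControl-v1-v2}, \ref{pro:Time-Convergence}, \ref{pro:Euler-Convergence}, \ref{pro:SpaceLocalization}, \ref{pro:RegConv} and \ref{pro:RegMConv} at $t=t_{0}$ via the triangle inequality over the chain of intermediate value functions, with all the time-propagation and discounting issues you discuss already absorbed into those propositions. Your additional remarks on the $e^{-\rho t_n}$ factors, the $1/h$ prefactor and the deterministic nature of all but the last term are accurate commentary but not a departure from the paper's route.
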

\smallskip{}

\begin{rem}
If the cost function $k$ (recall Assumption \ref{asm:Lipschitz-f-g})
were to depend on $x$, then, under a usual Lipschitz condition on
$k$ (similar to that of $f$), Theorem \ref{Th:CONVERGENCERATE}
would still hold, replacing only the term $\left(1+\left|x_{0}\right|\right)^{\frac{3}{2}}\sqrt{h}$
by $\left(1+\left|x_{0}\right|^{\frac{5}{2}}\right)\sqrt{h\log\left(\frac{2T}{h}\right)}$
(recalling Remark \ref{rk:discretization}).
\end{rem}

\smallskip{}

\begin{rem}
The adaptative local basis can be such that each hypercube contains
approximately the same number of Monte Carlo trajectories (see \cite{Bouchard11}).
This means that $\frac{1}{p\left(T,\delta,\varepsilon\right)}\sim b$
where $b$ is the number of functions in the regression basis. With
this remark in mind, the leading error term in Theorem \ref{Th:CONVERGENCERATE}
behaves like $\frac{\sqrt{b}}{h\sqrt{M}}$ for $p=2$. This is close
to the corresponding statistical error term in \cite{Lemor06} ($\sqrt{\frac{b\log\left(M\right)}{hM}}$)
in the context of BSDEs. The advantage of their approach is that they
can handle any (orthonormal) regression basis, while our approach
(in the context of optimal switching) provides a bound on the $L_{p}$
error for every $p\geq1$.
\end{rem}
\smallskip{}

\begin{example}
\label{ex:Brownian-Rate}In the case of a $d$-dimensional Brownian
motion, the rate of convergence of Theorem \ref{Th:CONVERGENCERATE}
can be explicited further, using the upper bound on $C\left(T,\varepsilon\right)$
from Example \ref{ex:Brownian-Loc} and the lower bound on $p\left(T,\delta,\varepsilon\right)$
from Example \ref{ex:Brownian-Prob}. Moreover, one can express the
rate of convergence as a function of only one parameter, choosing
the five numerical parameters $T$, $h$, $\varepsilon$, $\delta$
and $M$ accordingly. For instance, assuming $\underline{\delta}=\delta$,
and minimizing over $\delta$, $h$, $\varepsilon$ and $T$, one
can get a convergence rate upper bounded by $C_{p}\left(1+\left|x\right|\right)^{\frac{3}{2}}z$
by choosing $M\sim z^{-\frac{1}{2}\left[6\left(d+1\right)\right]^{2}}$.
This is admittedly highly demanding in terms of sample size $M$,
but remember that this expression suffers from the crude lower bound
on $p\left(T,\delta,\varepsilon\right)$ we chose previously.
\end{example}

\subsection{Convergence analysis}

From now on, we suppose that all the assumptions from Subsection \ref{sub:Assumptions}
are in force.

\subsubsection{Finite time horizon}
\begin{lem}
\label{lem:ErrorControl-v1-v2}There exists $C>0$ such that $\forall\left(t,x,i\right)\in\mathbb{R}_{+}\times\mathbb{R}^{d}\times\mathbb{R}^{d'}$:
\[
0\leq v\left(t,x,i\right)-v_{T}\left(t,x,i\right)\leq C\left(1+\left|x\right|\right)e^{-\bar{\rho}t\vee T-\rho_{1}t}\,.
\]
\end{lem}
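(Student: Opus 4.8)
The strategy is to bound the difference $v-v_T$ by splitting an admissible strategy for the infinite-horizon problem \eqref{eq:v} into its part before and after time $T$, and to estimate the tail contribution using the exponential decay of $f$ already established in \eqref{eq:ftbound}. First I would prove the lower bound $v(t,x,i)-v_T(t,x,i)\ge0$: any strategy $\alpha\in\mathcal{A}_{t,i}^T$ (no switches strictly after $T$) is in particular admissible for the infinite-horizon problem, and since $g_f(T,x,i)=\mathbb{E}\left[\int_T^\infty f(s,X_s^{T,x},i)\,ds\right]$ is exactly the remaining gain obtained by making no further switch after $T$, the objective of $v_T$ evaluated at $\alpha$ coincides with the objective of $v$ evaluated at the extension of $\alpha$ that freezes the regime after $T$. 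Taking suprema gives $v_T\le v$.

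For the upper bound, fix $\alpha\in\mathcal{A}_{t,i}$ achieving the supremum in \eqref{eq:v} up to arbitrary $\eta>0$, and compare it with the truncated strategy $\alpha^T\in\mathcal{A}_{t,i}^T$ obtained by suppressing all switches $\tau_n>T$. The difference between the two objectives then consists of: (i) the switching costs $\sum_{\tau_n>T}k(\tau_n,\zeta_n)$, which are nonnegative by Assumption \ref{asm:costs} and are therefore dropped in $v_T$'s favour, and (ii) the discrepancy between $\int_T^\infty f(s,X_s^{t,x},I_s^\alpha)\,ds$ and $g_f(T,X_T^{t,x},I_T^{\alpha^T})$. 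Since the regime $I^{\alpha^T}$ after $T$ is constant equal to $I_T^\alpha$ but $I^\alpha$ may still move, the gap is controlled by $2\,\mathbb{E}\left[\int_T^\infty |f(s,X_s^{t,x},I_s)|\,ds\right]$ uniformly over regimes, and here is where \eqref{eq:ftbound} enters: this tail is bounded by $C(1+|x|)e^{-\bar\rho T-\rho_1 t}$. Combining (i) and (ii), and letting $\eta\to0$, yields $v(t,x,i)-v_T(t,x,i)\le C(1+|x|)e^{-\bar\rho T-\rho_1 t}$.

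Finally I would observe that the bound \eqref{eq:vBound}, namely $v(t,x,i)\le C_f(1+|x|)e^{-\rho t}=C_f(1+|x|)e^{-\bar\rho t-\rho_1 t}$, trivially dominates $v(t,x,i)-v_T(t,x,i)$ as well (taking $v_T\ge0$, which follows from choosing the empty strategy together with the positivity implicit in the structure — or more carefully, from the same tail estimate applied at $t$); combining the two gives the stated $e^{-\bar\rho(t\vee T)-\rho_1 t}$ by taking the smaller of the $e^{-\bar\rho t}$ and $e^{-\bar\rho T}$ bounds. The only mildly delicate point is the uniform-in-regime control of the $f$-tail in step (ii): one must check that replacing $I_s^\alpha$ by the frozen $I_s^{\alpha^T}$ in the integrand only costs a factor $2$ in \eqref{eq:ftbound}, which works because the linear-growth bound on $\tilde f$ in Assumption \ref{asm:Lipschitz-f-g} is independent of the regime $i\in\mathbb{I}_q$ (a finite set), so the estimate \eqref{eq:ftbound} holds with a constant $C_f$ not depending on which regime path is used. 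This is the main — though minor — obstacle; everything else is a direct splitting argument.
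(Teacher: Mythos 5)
Your proof is correct and follows essentially the same route as the paper's: identify $v_T$ with the supremum of the infinite-horizon functional over strategies frozen after $T$, truncate an $\eta$-optimal strategy at $T$, drop the nonnegative post-$T$ switching costs (Assumption \ref{asm:costs}), and control the remaining $f$-tail uniformly in the regime via \eqref{eq:ftbound}. One small caution: $v_T\geq 0$ is not available since $f$ may be negative, so your parenthetical fallback — the tail estimate giving $v_T\left(t,x,i\right)\geq-C\left(1+\left|x\right|\right)e^{-\rho t}$, combined with \eqref{eq:vBound} — is the justification actually needed to cover the $t>T$ regime, whereas the paper avoids this case split altogether by writing the difference of the two payoffs as integrals over $\left[t\vee T,\infty\right)$ and applying \eqref{eq:ftbound} once.
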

\begin{proof}
First, we introduce the following notations:
\begin{eqnarray}
H\left(t,T,x,\alpha\right) & := & \int_{t}^{T}f\left(s,X_{s}^{t,x},I_{s}^{\alpha}\right)ds-\sum_{t<\tau_{n}\leq T}k\left(\tau_{n},\zeta_{n}\right)\label{eq:H}\\
J\left(t,T,x,\alpha\right) & := & \mathbb{E}\left[H\left(t,T,x,\alpha\right)\right]\label{eq:J}
\end{eqnarray}
for any admissible strategy $\alpha\in\mathcal{A}_{t,i}$. In particular:
\begin{eqnarray}
v\left(t,x,i\right)=\sup_{\alpha\in\mathcal{A}_{t,i}}J\left(t,+\infty,x,\alpha\right) & \,,\,\, & v_{T}\left(t,x,i\right)=\sup_{\alpha\in\mathcal{A}_{t,i}^{T}}J\left(t,+\infty,x,\alpha\right)\,.\label{eq:vT}
\end{eqnarray}
Fix $\left(t,x,i\right)\in\mathbb{R}_{+}\times\mathbb{R}^{d}\times\mathbb{R}^{d'}$.
Using equation \eqref{eq:vT}:
\[
v_{T}\left(t,x,i\right)=\sup_{\alpha\in\mathcal{A}_{t,i}^{T}}J\left(t,\infty,x,\alpha\right)\leq\sup_{\alpha\in\mathcal{A}_{t,i}}J\left(t,\infty,x,\alpha\right)=v\left(t,x,i\right)
\]
which provides the first inequality. Consider now the second inequality.
Choose $\varepsilon>0$. From the definition of $v$ (equation \eqref{eq:v-1})
there exists a strategy $\alpha^{\varepsilon}\in\mathcal{A}_{t,i}$
such that:
\[
v\left(t,x,i\right)-\varepsilon\leq J\left(t,\infty,x,\alpha^{\varepsilon}\right)\leq v\left(t,x,i\right)
\]
Define the truncated strategy $\alpha_{T}^{\varepsilon}\in\mathcal{A}_{t,i}^{T}$
such that $\forall s\in\left[t,T\right]$, $I_{s}^{\alpha_{T}^{\varepsilon}}=I_{s}^{\alpha^{\varepsilon}}$
and $\forall s>T$, $I_{s}^{\alpha_{T}^{\varepsilon}}=I_{T}^{\alpha^{\varepsilon}}$.
In order not to mix up the variables $\tau_{n}$ and $\zeta_{n}$
from different strategies, we add the name of the strategy in index
when needed. Then:
\begin{eqnarray*}
 &  & H\left(t,\infty,x,\alpha^{\varepsilon}\right)-H\left(t,\infty,x,\alpha_{T}^{\varepsilon}\right)\\
 & = & \left\{ \int_{t}^{\infty}f\left(s,X_{s}^{t,x},I_{s}^{\alpha^{\varepsilon}}\right)ds-\sum_{\tau_{n}^{\alpha^{\epsilon}}\geq t}k\left(\tau_{n}^{\alpha^{\epsilon}},\zeta_{n}^{\alpha^{\epsilon}}\right)\right\} \\
 &  & -\left\{ \int_{t}^{\infty}f\left(s,X_{s}^{t,x},I_{s}^{\alpha_{T}^{\varepsilon}}\right)ds-\sum_{\tau_{n}^{\alpha_{T}^{\epsilon}}\geq t}k\left(\tau_{n}^{\alpha_{T}^{\epsilon}},\zeta_{n}^{\alpha_{T}^{\epsilon}}\right)\right\} \\
 & = & \left\{ \int_{t}^{\infty}f\left(s,X_{s}^{t,x},I_{s}^{\alpha^{\varepsilon}}\right)ds-\sum_{\tau_{n}^{\alpha^{\epsilon}}\geq t}k\left(\tau_{n}^{\alpha^{\epsilon}},\zeta_{n}^{\alpha^{\epsilon}}\right)\right\} \\
 &  & -\left\{ \int_{t}^{t\vee T}f\left(s,X_{s}^{t,x},I_{s}^{\alpha^{\varepsilon}}\right)ds+\int_{t\vee T}^{\infty}f\left(s,X_{s}^{t,x},I_{t\vee T}^{\alpha^{\varepsilon}}\right)ds-\sum_{t\vee T\geq\tau_{n}^{\alpha^{\epsilon}}\geq t}k\left(\tau_{n}^{\alpha^{\epsilon}},\zeta_{n}^{\alpha^{\epsilon}}\right)\right\} \\
 & = & \int_{t\vee T}^{\infty}f\left(s,X_{s}^{t,x},I_{s}^{\alpha^{\varepsilon}}\right)ds-\int_{t\vee T}^{\infty}f\left(s,X_{s}^{t,x},I_{t\vee T}^{\alpha^{\varepsilon}}\right)ds-\sum_{\tau_{n}^{\alpha^{\epsilon}}\geq t\vee T}k\left(\tau_{n}^{\alpha^{\epsilon}},\zeta_{n}^{\alpha^{\epsilon}}\right)\\
 & \leq & \int_{t\vee T}^{\infty}f\left(s,X_{s}^{t,x},I_{s}^{\alpha^{\varepsilon}}\right)ds-\int_{t\vee T}^{\infty}f\left(s,X_{s}^{t,x},I_{t\vee T}^{\alpha^{\varepsilon}}\right)ds
\end{eqnarray*}
as $k\left(s,0\right)=0$ and $k\geq0$ (Assumption \ref{asm:costs}).
Hence, using Jensen's inequality and equation \eqref{eq:ftbound},
$\exists C>0$ such that
\begin{eqnarray*}
\left|J\left(t,\infty,x,\alpha^{\varepsilon}\right)-J\left(t,\infty,x,\alpha_{T}^{\varepsilon}\right)\right| & \leq & \mathbb{E}\left[\left|H\left(t,\infty,x,\alpha^{\varepsilon}\right)-H\left(t,\infty,x,\alpha_{T}^{\varepsilon}\right)\right|\right]\\
 & \leq & \mathbb{E}\left[\int_{t\vee T}^{\infty}\left|f\left(s,X_{s}^{t,x},I_{s}^{\alpha^{\varepsilon}}\right)\right|ds\right]+\mathbb{E}\left[\int_{t\vee T}^{\infty}\left|f\left(s,X_{s}^{t,x},I_{t\vee T}^{\alpha^{\varepsilon}}\right)\right|ds\right]\\
 & \leq & C\left(1+\left|x\right|\right)e^{-\bar{\rho}t\vee T-\rho_{1}t}
\end{eqnarray*}
Finally, given that $\, v\left(t,x,i\right)\leq\varepsilon+J\left(t,\infty,x,\alpha^{\varepsilon}\right)\,$
and $\, v_{T}\left(t,x,i\right)\geq J\left(t,\infty,x,\alpha_{T}^{\varepsilon}\right)\,$,
the following holds:
\begin{eqnarray*}
v\left(t,x,i\right)-v_{T}\left(t,x,i\right) & \leq & \varepsilon+J\left(t,\infty,x,\alpha^{\varepsilon}\right)-J\left(t,\infty,x,\alpha_{T}^{\varepsilon}\right)\\
 & \leq & \varepsilon+C\left(1+\left|x\right|\right)e^{-\bar{\rho}t\vee T-\rho_{1}t}\,.
\end{eqnarray*}
Since this is true for any $\varepsilon>0$, and that $C$, $\rho$
and $\rho_{1}$ do not depend on $\varepsilon$, the proposition is
proved.
\end{proof}

\medskip{}

Now, we focus on the final boundary $g_{f}$. For the time being,
denote the value function \eqref{eq:v-2} as $v_{T}^{g_{f}}$ to emphasize
the dependence of $v$ on the terminal condition. As a consequence
of equation \eqref{eq:ftbound}, $\forall\left(x,i\right)\in\mathbb{R}^{d}\times\mathbb{I}_{q}$:
\begin{equation}
\left|g_{f}(T,x,i)\right|\leq C\left(1+\left|x\right|\right)e^{-\rho T}\label{eq:gfInequality}
\end{equation}
Hence, define the class $\Theta_{g_{f}}$ of Lipschitz functions from
$\mathbb{R}_{+}\times\mathbb{R}^{d}\times\mathbb{I}_{q}$ into $\mathbb{R}$
such that $\forall g\in\Theta_{g_{f}}$, $\forall\left(T,x,x',i\right)\in\mathbb{R}_{+}\times\mathbb{R}^{d}\times\mathbb{R}^{d}\times\mathbb{I}_{q}$:
\begin{eqnarray}
\left|g(T,x,i)-g(T,x',i)\right| & \leq & C_{g}e^{-\rho T}\left|x-x'\right|\label{eq:gLipschitz}\\
\left|g(T,x,i)\right| & \leq & C_{g}e^{-\rho T}\left(1+\left|x\right|\right)\label{eq:gInequality}
\end{eqnarray}
for some $C_{g}>0$. Obviously $g_{f}\in\Theta_{g_{f}}$. Now, for
any $g\in\Theta_{g_{f}}$, denote as $v_{T}^{g}$ the value function
defined as in equation \eqref{eq:v-2} with $g$ instead of $g_{f}$.
We are going to show that the precise approximation error due to the
choice of final value $g$ does not matter much as long as $g$ is
chosen in this class $\Theta_{g_{f}}$.

\medskip{}

\begin{lem}
\label{lem:ErrorControl-vgt-vg}There exists $C>0$ such that $\forall\left(t,x,i\right)\in\mathbb{R}_{+}\times\mathbb{R}^{d}\times\mathbb{I}_{q}$:
\[
\left|v_{T}^{g_{f}}\left(t,x,i\right)-v_{T}^{g}\left(t,x,i\right)\right|\leq C\left(1+\left|x\right|\right)e^{-\bar{\rho}t\vee T-\rho_{1}t}
\]
\end{lem}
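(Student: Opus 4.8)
The plan is to exploit that $v_T^{g_f}(t,x,i)$ and $v_T^g(t,x,i)$ are suprema of the \emph{same} cost functional over the \emph{same} admissible set $\mathcal{A}_{t,i}^T$, the two functionals differing only in the terminal term ($g_f$ versus $g$) evaluated at time $T$. So the first step is to bound the difference of suprema by the supremum of the difference: using $\left|\sup_\alpha A(\alpha)-\sup_\alpha B(\alpha)\right|\le\sup_\alpha\left|A(\alpha)-B(\alpha)\right|$ and then Jensen's inequality,
\[
\left|v_T^{g_f}(t,x,i)-v_T^g(t,x,i)\right|\le\sup_{\alpha\in\mathcal{A}_{t,i}^T}\mathbb{E}\left[\left|g_f\bigl(T,X_T^{t,x},I_T^\alpha\bigr)-g\bigl(T,X_T^{t,x},I_T^\alpha\bigr)\right|\right]\le\mathbb{E}\Bigl[\max_{j\in\mathbb{I}_q}\left|g_f\bigl(T,X_T^{t,x},j\bigr)-g\bigl(T,X_T^{t,x},j\bigr)\right|\Bigr],
\]
the last bound using that $I_T^\alpha$ takes values in the finite set $\mathbb{I}_q$; note that the right-hand side no longer depends on $\alpha$.

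Next I would establish a pointwise control of the terminal perturbation. Combining the a priori estimate \eqref{eq:gfInequality} on $g_f$ with the linear-growth bound \eqref{eq:gInequality} valid for every element of $\Theta_{g_f}$ (in particular for the fixed $g$), for all $y\in\mathbb{R}^d$ and $j\in\mathbb{I}_q$,
\[
\left|g_f(T,y,j)-g(T,y,j)\right|\le\left|g_f(T,y,j)\right|+\left|g(T,y,j)\right|\le C\left(1+\left|y\right|\right)e^{-\rho T},
\]
where $C$ may absorb the fixed constant $C_g$. Plugging $y=X_T^{t,x}$, taking expectations, and invoking the moment bound \eqref{eq:momentBound} with $p=1$ (legitimate since $T\ge t$), namely $\mathbb{E}\bigl[\left|X_T^{t,x}\right|\bigr]\le C_1\left(1+\left|x\right|\right)e^{\rho_1(T-t)}$, gives
\[
\left|v_T^{g_f}(t,x,i)-v_T^g(t,x,i)\right|\le C\,e^{-\rho T}\bigl(1+\mathbb{E}\left[\left|X_T^{t,x}\right|\right]\bigr)\le C\left(1+\left|x\right|\right)e^{-(\rho-\rho_1)T-\rho_1 t}=C\left(1+\left|x\right|\right)e^{-\bar{\rho}T-\rho_1 t},
\]
which is the claimed estimate when $t\le T$ (then $t\vee T=T$). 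For $t\ge T$ the situation is even simpler: the truncation is inactive, the two value functions reduce to $g_f$ resp.\ $g$ evaluated at time $t$, and the pointwise bound above (applied at time $t$ instead of $T$) yields $C(1+|x|)e^{-\rho t}=C(1+|x|)e^{-\bar{\rho}t-\rho_1 t}$, again matching the statement since $t\vee T=t$; alternatively one can simply invoke \eqref{eq:vBound}.

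The mathematical content here is light; the one point that genuinely needs attention is the \textbf{uniformity of $C$} in $t$, $T$ and $x$. This is exactly what the discount part of Assumption \ref{asm:Lipschitz-f-g}, i.e.\ $\rho>\rho_1$ (equivalently $\bar{\rho}>0$), secures: it makes the product of the discount factor $e^{-\rho T}$ and the exponentially-growing moment bound $e^{\rho_1(T-t)}$ collapse into the bounded quantity $e^{-\bar{\rho}T-\rho_1 t}$, so the error stays controlled and in fact tends to $0$ as $T\to\infty$. The residual dependence of $C$ on $g$ through $C_g$ is harmless, $g$ having been fixed once and for all in $\Theta_{g_f}$. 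The overall structure is analogous to, and slightly lighter than, the proof of Lemma \ref{lem:ErrorControl-v1-v2}, since here there is no need for the $\varepsilon$-optimal-strategy truncation argument.
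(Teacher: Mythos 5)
Your proof is correct and takes essentially the same route as the paper: both reduce the lemma to the terminal discrepancy $\mathbb{E}\left[\left|g_{f}-g\right|\left(T,X_{T}^{t,x},\cdot\right)\right]$, bounded via \eqref{eq:gfInequality}, \eqref{eq:gInequality}, the moment estimate \eqref{eq:momentBound} with $p=1$, and $\bar{\rho}=\rho-\rho_{1}>0$. The only differences are that you compare the two suprema through the elementary inequality $\left|\sup_{\alpha}A-\sup_{\alpha}B\right|\leq\sup_{\alpha}\left|A-B\right|$ instead of the paper's ($\varepsilon$-)optimal-strategy argument (a harmless simplification), and you treat the case $t\geq T$ explicitly, which is welcome; just note that your parenthetical appeal to \eqref{eq:vBound} there is slightly misplaced, since that bound concerns $v$ rather than $v_{T}^{g}$, though your primary argument for that case is fine.
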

\begin{proof}
Fix $\left(t,x,i\right)\in\mathbb{R}_{+}\times\mathbb{R}^{d}\times\mathbb{I}_{q}$.
To shorten the proof, we assume that $v_{T}^{g_{f}}$ (resp. $v_{T}^{g}$)
admits an optimal strategy $\alpha_{f}^{*}\in\mathcal{A}_{t,i}^{T}$
(resp. $\alpha^{*}\in\mathcal{A}_{t,i}^{T}$) (this assumption can
then be relaxed using $\varepsilon$-optimal strategies as in the
proof of Proposition \ref{lem:ErrorControl-v1-v2})%
\footnote{Note that under the assumptions from Subsection \ref{sub:Assumptions},
one may use Theorem 3.1 from \cite{Hu10} to get the existence of
a unique optimal strategy $\alpha^{*}$ for the value function \eqref{eq:v-2},
satisfying $\mathbb{E}\left[\left|\sum_{0\leq\tau_{n}^{\alpha^{*}}\leq T}k\left(\tau_{n}^{\alpha^{*}},\zeta_{n}^{\alpha^{*}}\right)\right|^{2}\right]<\infty$%
}. Therefore, recalling the notations $H$ (equation \eqref{eq:H})
and $J$ (equation \eqref{eq:J}) introduced in the proof of Lemma
\ref{lem:ErrorControl-v1-v2}:
\begin{eqnarray*}
v_{T}^{g_{f}}\left(t,x,i\right)-v_{T}^{g}\left(t,x,i\right) & = & J\left(t,T,x,\alpha_{f}^{*}\right)+\mathbb{E}\left[g_{f}\left(T,X_{T}^{t,x},I_{T}^{\alpha_{f}^{*}}\right)\right]-J\left(t,T,x,\alpha^{*}\right)-\mathbb{E}\left[g\left(T,X_{T}^{t,x},I_{T}^{\alpha^{*}}\right)\right]\\
 & = & J\left(t,T,x,\alpha_{f}^{*}\right)+\mathbb{E}\left[g\left(T,X_{T}^{t,x},I_{T}^{\alpha_{f}^{*}}\right)\right]-J\left(t,T,x,\alpha^{*}\right)-\mathbb{E}\left[g\left(T,X_{T}^{t,x},I_{T}^{\alpha^{*}}\right)\right]\\
 &  & +\mathbb{E}\left[g_{f}\left(T,X_{T}^{t,x},I_{T}^{\alpha_{f}^{*}}\right)-g\left(T,X_{T}^{t,x},I_{T}^{\alpha^{*}}\right)\right]\\
 & \leq & \mathbb{E}\left[g_{f}\left(T,X_{T}^{t,x},I_{T}^{\alpha_{f}^{*}}\right)-g\left(T,X_{T}^{t,x},I_{T}^{\alpha^{*}}\right)\right]\\
 & \leq & C\left(1+\mathbb{E}\left[\left|X_{T}^{t,x}\right|\right]\right)e^{-\rho T}\,\,\leq\,\,\, C\left(1+\left|x\right|\right)e^{-\bar{\rho}t\vee T-\rho_{1}t}
\end{eqnarray*}
Symmetrically, the same inequality holds for $v_{T}^{g}\left(t,x,i\right)-v_{T}^{g_{f}}\left(t,x,i\right)$,
ending the proof.
\end{proof}

\medskip{}

\begin{prop}
\label{pro:ErrorControl-v1-v2}There exists $C>0$ independent of
$T$ such that $\forall\left(t,x,i\right)\in\mathbb{R}_{+}\times\mathbb{R}^{d}\times\mathbb{I}_{q}$
and $\forall g\in\Theta_{g_{f}}$:
\[
\left|v\left(t,x,i\right)-v_{T}^{g}\left(t,x,i\right)\right|\leq C\left(1+\left|x\right|\right)e^{-\bar{\rho}t\vee T-\rho_{1}t}
\]
\end{prop}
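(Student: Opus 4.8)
The plan is to combine the two lemmas just proved via the triangle inequality. Proposition \ref{pro:ErrorControl-v1-v2} asserts a bound on $\left|v-v_T^g\right|$ for an arbitrary $g\in\Theta_{g_f}$, and we already control $\left|v-v_T^{g_f}\right|$ (this is Lemma \ref{lem:ErrorControl-v1-v2}, since $v_T^{g_f}$ is precisely the value function $v_T$ of equation \eqref{eq:v-2}) and $\left|v_T^{g_f}-v_T^g\right|$ (Lemma \ref{lem:ErrorControl-vgt-vg}).

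First I would write, for any fixed $\left(t,x,i\right)\in\mathbb{R}_{+}\times\mathbb{R}^{d}\times\mathbb{I}_{q}$ and any $g\in\Theta_{g_f}$,
\[
\left|v\left(t,x,i\right)-v_{T}^{g}\left(t,x,i\right)\right|\leq\left|v\left(t,x,i\right)-v_{T}^{g_{f}}\left(t,x,i\right)\right|+\left|v_{T}^{g_{f}}\left(t,x,i\right)-v_{T}^{g}\left(t,x,i\right)\right|.
\]
Then I would invoke Lemma \ref{lem:ErrorControl-v1-v2} to bound the first term by $C_1\left(1+\left|x\right|\right)e^{-\bar{\rho}t\vee T-\rho_{1}t}$, and Lemma \ref{lem:ErrorControl-vgt-vg} to bound the second term by $C_2\left(1+\left|x\right|\right)e^{-\bar{\rho}t\vee T-\rho_{1}t}$. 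Adding these and setting $C:=C_1+C_2$ gives the claimed estimate. Crucially, both constants $C_1$ and $C_2$ furnished by the lemmas are independent of $T$ (they depend only on the structural constants $C_f$, $C_g$, $\rho$, $\rho_1$ of the problem), so $C$ is independent of $T$ as required; moreover the bound is uniform over all $g\in\Theta_{g_f}$ since Lemma \ref{lem:ErrorControl-vgt-vg} holds for every such $g$ with a single constant tied to $C_g$ in the definition \eqref{eq:gLipschitz}--\eqref{eq:gInequality} of the class.

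There is essentially no obstacle here: the proposition is a routine corollary of the two preceding lemmas. The only point that warrants a word of care is to confirm that the two lemmas' constants genuinely do not depend on $T$ — which is clear from their proofs, where the $T$-dependence is entirely captured by the explicit factor $e^{-\bar{\rho}t\vee T-\rho_{1}t}$ and the estimates \eqref{eq:ftbound}, \eqref{eq:gfInequality}, \eqref{eq:gInequality} are all uniform in $T$ — and to note that $v_T$ of \eqref{eq:v-2} is by definition $v_T^{g_f}$, so Lemma \ref{lem:ErrorControl-v1-v2} indeed gives the first term. No $\varepsilon$-optimal-strategy argument is needed at this stage since it has already been absorbed into the proofs of the lemmas.
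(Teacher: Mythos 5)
Your proposal is correct and is exactly the paper's argument: the paper's proof of Proposition \ref{pro:ErrorControl-v1-v2} consists of combining Lemmas \ref{lem:ErrorControl-v1-v2} and \ref{lem:ErrorControl-vgt-vg} via the triangle inequality, identifying $v_T$ with $v_T^{g_f}$. Your additional remarks about the $T$-independence and uniformity in $g$ of the constants are accurate and merely make explicit what the paper leaves implicit.
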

\begin{proof}
Combine Lemmas \ref{lem:ErrorControl-v1-v2} and \ref{lem:ErrorControl-vgt-vg}.
\end{proof}

\medskip{}

From now on, we choose and keep one final value function $g\in\Theta_{g_{f}}$,
and remove the index $g$ from the notation of $v$ and its subsequent
approximations.

\subsubsection{Time Discretization}
\begin{prop}
\label{pro:Time-Convergence}There exists a positive constant $C$
such that for any $\left(t,x,i\right)\in\Pi\times\mathbb{R}^{d}\times\mathbb{I}_{q}$
:
\begin{equation}
\left|v_{T}\left(t,x,i\right)-v_{\Pi}\left(t,x,i\right)\right|\leq Ce^{-\rho t}\left(1+\left|x\right|^{\frac{3}{2}}\right)h^{\frac{1}{2}}\label{eq:DiscretizationCvg}
\end{equation}
\end{prop}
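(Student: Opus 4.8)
The plan is to compare the finite-horizon continuous-time value function $v_T$ with its time-discretized counterpart $v_\Pi$, exploiting the fact that $v_\Pi$ is just $v_T$ with the set of admissible strategies restricted to $\mathcal{A}_{t,i}^\Pi \subset \mathcal{A}_{t,i}^T$. Since restricting the strategy set can only decrease the supremum, one inequality — namely $v_\Pi \le v_T$ — is immediate, and the real work is to bound $v_T - v_\Pi$ from above. For this I would proceed as in the proof of Lemma \ref{lem:ErrorControl-v1-v2}: fix $(t,x,i)\in\Pi\times\mathbb{R}^d\times\mathbb{I}_q$, pick an $\eta$-optimal strategy $\alpha^\eta \in \mathcal{A}_{t,i}^T$ for $v_T$, and construct from it a ``rounded'' strategy $\alpha^\eta_\Pi \in \mathcal{A}_{t,i}^\Pi$ by postponing each switching time $\tau_n$ to the next grid point $\pi^+(\tau_n) := \min\{t_k \in \Pi : t_k \ge \tau_n\}$ (keeping the same switching destinations $\iota_n$, and discarding or coalescing switches that would collide on the same grid point, which is licit by the triangular cost condition in Assumption \ref{asm:costs}). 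One then estimates $J(t,\infty,x,\alpha^\eta) - J(t,\infty,x,\alpha^\eta_\Pi)$ and lets $\eta\to 0$.

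The error between the two strategies splits into two pieces. \emph{(i) The running-profit term.} Because $\alpha^\eta$ and $\alpha^\eta_\Pi$ induce the same regime $I^\alpha_s$ except on the union of the intervals $[\tau_n, \pi^+(\tau_n))$, each of length at most $h$, the difference $\int_t^\infty \big(f(s,X_s^{t,x},I_s^{\alpha^\eta}) - f(s,X_s^{t,x},I_s^{\alpha^\eta_\Pi})\big)\,ds$ is controlled using the Lipschitz property of $\tilde f$ in the regime variable $i$ (Assumption \ref{asm:Lipschitz-f-g}), the bound $|i-i'|\le C$ on the finite set $\mathbb{I}_q$, the exponential discount $e^{-\rho s}$, and the moment bound \eqref{eq:momentBound} on $X_s^{t,x}$. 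The subtle point is that one must control the \emph{number} of switches of the $\eta$-optimal strategy: using the footnote after Lemma \ref{lem:ErrorControl-vgt-vg} (Theorem 3.1 of \cite{Hu10}, applicable here), one has $\mathbb{E}\big[\sum_{t\le\tau_n\le T} \mathbf{1}\big]< \infty$, in fact square-integrably, so the total measure of the ``bad'' intervals is $O(h)$ times an integrable random number of switches. \emph{(ii) The cost term.} Shifting $\tau_n \mapsto \pi^+(\tau_n)$ changes $k(\tau_n,\zeta_n) = e^{-\rho\tau_n}\tilde k(\tau_n,\zeta_n)$ by at most $C_k\, h\, e^{-\rho\tau_n}$ per switch (Lipschitz-in-time of $\tilde k$), plus the contribution from merging colliding switches, which by the strict triangular inequality only \emph{lowers} the total cost. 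Summing over switches and using the integrable switch count and the discount again yields an $O(h)$ bound here too.

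The rate $h^{1/2}$ (rather than $h$) and the $(1+|x|^{3/2})$ prefactor must come from the estimate of the time-regularity of the running integral near the switching times — specifically from the standard diffusion estimate $\mathbb{E}\big[\sup_{|s-s'|\le h}|X_s^{t,x}-X_{s'}^{t,x}|\big] \le C(1+|x|)\sqrt{h}$, which is what one gets from the Lipschitz/linear-growth Assumption \ref{asm:SDE-ExistenceUnicity} together with Burkholder--Davis--Gundy; combined with the linear-growth bound on $\tilde f$ itself this produces one factor $(1+|x|)$ from the increment and another half-power from $\sqrt h$, giving overall $C e^{-\rho t}(1+|x|^{3/2})h^{1/2}$ after collecting all discounted contributions over $[t,T]$ (the geometric-type sum in $n$ converges because $\rho>0$, and $e^{-\rho t}$ factors out since all later terms are weighted by $e^{-\rho\tau_n}\le e^{-\rho t}$). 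The main obstacle I anticipate is the bookkeeping of colliding switches and the uniform (in the choice of $\eta$-optimal strategy) control of the expected number of switches — handling this cleanly is precisely why the authors invoke the existence-and-integrability result from \cite{Hu10} in the footnote, and the symmetric bound for $v_\Pi - v_T$ then follows by reversing the roles, with the extra care that an optimal strategy for $v_\Pi$ already lies in $\mathcal{A}_{t,i}^T$ so no rounding is needed in that direction, only the trivial inequality.
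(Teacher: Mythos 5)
Your route is genuinely different from the paper's: the paper proves Proposition \ref{pro:Time-Convergence} by directly invoking Theorem 3.1 of \cite{Gassiat11} (a reflected-BSDE/quantization analysis), only observing that the switching costs do not depend on $x$ and that the discount factor $e^{-\rho t}$ can be factored out so that $C$ is independent of $T$; there is no strategy-rounding argument in the paper at all. Your rounding plan (postpone each $\tau_n$ to the next grid point, coalesce colliding switches via the strict triangular inequality, use $v_\Pi\le v_T$ for the trivial direction) is a sound skeleton, but as written it has a genuine gap at the one quantitative step everything hinges on: the control of the number of switches of the $\eta$-optimal strategy, \emph{uniformly over the choice of $\eta$-optimal strategy and with the correct dependence on $(t,x)$}. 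Appealing to \cite{Hu10} (which the paper uses in Lemma \ref{lem:ErrorControl-vgt-vg} only to grant existence of an optimizer) gives at best qualitative integrability of the switching costs of one optimal strategy; it does not give the bound you actually need, namely that the error per switch, summed, is $O(h)\cdot(1+|x|)e^{-\rho t}$ uniformly in $(t,x,i)$, which is required since the constant $C$ in \eqref{eq:DiscretizationCvg} must be uniform. The missing (and elementary) ingredient is the budget argument based on Assumption \ref{asm:costs}: since $\tilde k\ge\kappa>0$ on actual switches, any strategy whose value is within $1$ of $v_T$ satisfies $\kappa\,\mathbb{E}\bigl[\sum_{t\le\tau_n\le T}e^{-\rho\tau_n}\bigr]\le\mathbb{E}\bigl[\sum_n k(\tau_n,\zeta_n)\bigr]\le C(1+|x|)e^{-\rho t}$ by \eqref{eq:ftbound} and \eqref{eq:gInequality}; this discounted switch count is exactly the weight against which your per-switch errors (regime mismatch on $[\tau_n,\pi^+(\tau_n))$, bounded by $C h e^{-\rho\tau_n}$ since $\mathbb{I}_q$ is finite, and cost shift, bounded by $C_k h e^{-\rho\tau_n}$ by Lipschitz-in-time of $\tilde k$ and monotonicity of the discount) must be summed.

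Relatedly, your diagnosis of where the rate $h^{1/2}$ and the prefactor $(1+|x|^{3/2})$ come from is off target, which signals that the estimate was not actually carried out. In this comparison the diffusion is uncontrolled and the path $X^{t,x}$ is the \emph{same} under $\alpha^{\eta}$ and its rounded version, so no modulus-of-continuity estimate $\mathbb{E}[\sup_{|s-s'|\le h}|X_s-X_{s'}|]\le C(1+|x|)\sqrt h$ enters: the running-profit mismatch involves only the regime argument of $\tilde f$ (Lipschitz in $i$, uniformly in $x$), and the terminal terms coincide because the rounded strategy has the same regime at $T$. Once the switch-count bound above is in place, your argument actually delivers $|v_T-v_\Pi|\le C(1+|x|)e^{-\rho t}h$, which is stronger than \eqref{eq:DiscretizationCvg} and perfectly acceptable; the $h^{1/2}$ and the exponent $3/2$ (and the $\log$ factor of Remark \ref{rk:discretization} for state-dependent costs) are features of the route taken in \cite{Gassiat11}, not of the rounding route. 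So: different approach, repairable, but the proposal as it stands leaves its central quantitative step resting on an inapplicable reference and misattributes the source of the stated rate.
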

\begin{proof}
Under the assumptions from Subsection \ref{sub:Assumptions}, one
can apply Theorem 3.1 in \cite{Gassiat11} to prove \eqref{eq:DiscretizationCvg},
noticing that the cost function $k$ does not depend on the state
variable $x$. 

Use the discounting factor in the definition of $f$ to factor the
$e^{-\rho t}$ term and to get that $C$ does not depend on $T$.\end{proof}
\begin{rem}
Another alternative to get this rate of $h^{\frac{1}{2}}$ is to work
with the reflected BSDE representation of $v_{\Pi}$, as in \cite{Carmona08}
(adapting \cite{Bouchard04}) or \cite{Chassagneux11}.
\end{rem}

\begin{rem}
\label{rk:discretization}Were the cost function $k$ to depend on
the state variable, the upper bound in Proposition \ref{pro:Time-Convergence}
would only be $Ce^{-\rho t}\left(1+\left|x\right|^{\frac{5}{2}}\right)\left(h\log\left(\frac{2T}{h}\right)\right)^{\frac{1}{2}}$,
as stated in \cite{Gassiat11} (making use of results from \cite{Fischer09}). 
\end{rem}

\medskip{}

\begin{prop}
\label{pro:Euler-Convergence}There exists $C>0$ such that for any
$\left(t,x,i\right)\in\Pi\times\mathbb{R}^{d}\times\mathbb{I}_{q}$
:
\[
\left|v_{\Pi}\left(t,x,i\right)-\bar{v}_{\Pi}\left(t,x,i\right)\right|\leq Ce^{-\rho t}h^{\frac{1}{2}}
\]
\end{prop}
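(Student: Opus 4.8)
The plan is to estimate the difference $v_\Pi-\bar v_\Pi$ by controlling, uniformly over admissible strategies $\alpha\in\mathcal A_{t,i}^\Pi$, the difference between the two payoff functionals in \eqref{eq:v-3} and \eqref{eq:v-4}. Since the switching costs $k(\tau_n,\iota_{n-1},\iota_n)$ are identical in both functionals (they depend only on time and on the values of $I^\alpha$, not on $X$), the whole discrepancy comes from the running reward and the terminal term, and it reads
\[
\Bigl|\mathbb{E}\Bigl[\int_t^T\bigl(f(s,X_s^{t,x},I_s^\alpha)-f(\pi(s),\bar X_s^{t,x},I_s^\alpha)\bigr)ds\Bigr]\Bigr|
+\Bigl|\mathbb{E}\bigl[g(T,X_T^{t,x},I_T^\alpha)-g(T,\bar X_T^{t,x},I_T^\alpha)\bigr]\Bigr|.
\]
Because $v_\Pi=\sup_\alpha$ and $\bar v_\Pi=\sup_\alpha$ of the respective functionals over the same strategy set $\mathcal A_{t,i}^\Pi$, a uniform bound on this discrepancy bounds $|v_\Pi-\bar v_\Pi|$; one handles the $\varepsilon$-optimal strategies as in Lemma \ref{lem:ErrorControl-v1-v2} if suprema are not attained. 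I would then split the running reward error into a time-discretization part $f(s,\cdot)\to f(\pi(s),\cdot)$ and a state-discretization part $X\to\bar X$.

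For the first part I would use the discounting structure from Assumption \ref{asm:Lipschitz-f-g}, $f(s,x,i)=e^{-\rho s}\tilde f(s,x,i)$, together with the Lipschitz continuity of $\tilde f$ in $t$ and $x$. The term $|f(s,x,i)-f(\pi(s),x,i)|$ is controlled by $e^{-\rho\pi(s)}|\tilde f(s,x,i)-\tilde f(\pi(s),x,i)|+\tilde f(\pi(s),x,i)|e^{-\rho s}-e^{-\rho\pi(s)}|$, which is $\le Ce^{-\rho\pi(s)}(1+|x|)\,h$ since $|s-\pi(s)|\le h$. For the state part, $|f(\pi(s),X_s,i)-f(\pi(s),\bar X_s,i)|\le C_f e^{-\rho\pi(s)}|X_s-\bar X_s|$, and here I invoke the standard strong-error estimate for the Euler scheme: under Assumption \ref{asm:SDE-ExistenceUnicity}, $\mathbb{E}[|X_s^{t,x}-\bar X_s^{t,x}|]\le C\sqrt h\,(1+|x|)e^{C(s-t)}$ — or, more carefully, one uses a moment-type bound giving $\mathbb{E}[\sup_{t\le s\le T}|X_s^{t,x}-\bar X_s^{t,x}|]\le C\sqrt h$ with the $x$-dependence absorbed into the constant after using the discount factor $e^{-\rho s}$ against the exponential growth $e^{\rho_1(s-t)}$ from \eqref{eq:momentBound}, exactly as in Proposition \ref{pro:Time-Convergence}. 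Integrating $\int_t^T e^{-\rho\pi(s)}ds\le Ce^{-\rho t}$ (using $\rho>0$, hence uniformly in $T$) collapses the running-reward contribution to $Ce^{-\rho t}h^{1/2}$. The terminal term is handled identically using \eqref{eq:gLipschitz}: $|g(T,X_T,i)-g(T,\bar X_T,i)|\le C_g e^{-\rho T}|X_T-\bar X_T|$, and $e^{-\rho T}\mathbb{E}[|X_T^{t,x}-\bar X_T^{t,x}|]\le Ce^{-\rho T}\sqrt h(1+|x|)e^{C(T-t)}$; after pairing with the discount one again gets a term dominated by $Ce^{-\rho t}h^{1/2}$, and since $t\le T$ the final bound is as stated. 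Taking the supremum over $\alpha$ and then over the sign gives $|v_\Pi-\bar v_\Pi|\le Ce^{-\rho t}h^{1/2}$.

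The main obstacle is the bookkeeping of the $x$-dependence of the constants: the naive Euler strong error carries a factor $(1+|x|)e^{C(T-t)}$, which would blow up in $T$, so one must be careful to exploit the discount $\rho>\rho_1$ (Assumption \ref{asm:Lipschitz-f-g}) to absorb the exponential-in-$T$ and the polynomial-in-$x$ growth into a clean $Ce^{-\rho t}h^{1/2}$ that is genuinely independent of $T$ and of $x_0$ — this is why the statement of the proposition has no $(1+|x|)$ prefactor, in contrast to Proposition \ref{pro:Time-Convergence}; it would be worth double-checking that the Lipschitz-in-$x$ constant of $\tilde f$ combined with the non-polynomial Euler error is indeed uniformly bounded here, or else the $(1+|x|)$ should reappear and could be silently absorbed into the $\sqrt h$ term of Theorem \ref{Th:CONVERGENCERATE}. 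A secondary, purely technical point is that $I_s^\alpha$ is piecewise constant and $\mathcal F$-adapted but does not interact with $X$ versus $\bar X$ — one simply freezes the trajectory of $I^\alpha$ and applies the above bounds pathwise before taking expectations, which is legitimate because the strategy set $\mathcal A_{t,i}^\Pi$ is the same for both value functions and the switching cost terms cancel exactly.
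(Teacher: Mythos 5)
Your proposal follows essentially the same route as the paper's proof: fix a common (optimal or $\varepsilon$-optimal) strategy in $\mathcal{A}_{t,i}^{\Pi}$ so that the switching costs cancel, bound the difference of the two gain functionals via the Lipschitz properties of $\tilde{f}$ and $g$ together with the strong $L^1$ error of the Euler scheme, and use the discount factor to extract $e^{-\rho t}$. The caveat you raise about the $(1+|x|)e^{C(T-t)}$ factor in the Euler strong error is legitimate, but the paper's own proof glosses over it in exactly the same way (it simply writes $\mathbb{E}\bigl[\sup_{t\leq s\leq T}\bigl|X_{s}^{t,x}-\bar{X}_{s}^{t,x}\bigr|\bigr]\leq C h^{\frac{1}{2}}$ with a generic constant), so your argument is faithful to the published one.
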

\begin{proof}
$T$ and $g$ being fixed, we can define, in the spirit of equations
\eqref{eq:H} and \eqref{eq:J}, the following quantities:
\begin{eqnarray}
H\left(t,x,\alpha\right) & := & \int_{t}^{T}f\left(s,X_{s}^{t,x},I_{s}^{\alpha}\right)ds-\sum_{t\leq\tau_{n}\leq T}k\left(\tau_{n},\iota_{n-1},\iota_{n}\right)+g\left(t\vee T,X_{t\vee T}^{t,x},I_{t\vee T}^{\alpha}\right)\label{eq:newH}\\
J\left(t,x,\alpha\right) & := & \mathbb{E}\left[H\left(t,x,\alpha\right)\right]\label{eq:newJ}\\
\bar{H}\left(t,x,\alpha\right) & := & \int_{t}^{T}f\left(\pi\left(s\right),\bar{X}_{s}^{t,x},I_{s}^{\alpha}\right)ds-\sum_{t\leq\tau_{n}\leq T}k\left(\tau_{n},\iota_{n-1},\iota_{n}\right)+g\left(t\vee T,\bar{X}_{t\vee T}^{t,x},I_{t\vee T}^{\alpha}\right)\label{eq:Hbar}\\
\bar{J}\left(t,x,\alpha\right) & := & \mathbb{E}\left[\bar{H}\left(t,x,\alpha\right)\right]\label{eq:Jbar}
\end{eqnarray}
for any admissible strategy $\alpha\in\mathcal{A}_{t,i}^{\Pi}$. For
these discretized problems, the existence of optimal controls $\alpha^{*}$
and $\bar{\alpha}^{*}$ is granted. Hence:
\begin{eqnarray*}
v_{\Pi}\left(t,x,i\right)-\bar{v}_{\Pi}\left(t,x,i\right) & = & J\left(t,x,\alpha^{*}\right)-\bar{J}\left(t,x,\bar{\alpha}^{*}\right)\\
 & = & J\left(t,x,\alpha^{*}\right)-\bar{J}\left(t,x,\alpha^{*}\right)+\left\{ \bar{J}\left(t,x,\alpha^{*}\right)-\bar{J}\left(t,x,\bar{\alpha}^{*}\right)\right\} \\
 & \leq & J\left(t,x,\alpha^{*}\right)-\bar{J}\left(t,x,\alpha^{*}\right)\\
 & = & \int_{t}^{T}e^{-\rho s}\mathbb{E}\left[\tilde{f}\left(s,X_{s}^{t,x},I_{s}^{\alpha^{*}}\right)-\tilde{f}\left(\pi\left(s\right),\bar{X}_{s}^{t,x},I_{s}^{\alpha^{*}}\right)\right]ds\\
 &  & +\mathbb{E}\left[g\left(T,X_{T}^{t,x},I_{T}^{\alpha^{*}}\right)-g\left(T,\bar{X}_{T}^{t,x},I_{T}^{\alpha^{*}}\right)\right]\\
 & \leq & C_{f}\int_{t}^{T}e^{-\rho s}\mathbb{E}\left[\left|X_{s}^{t,x}-\bar{X}_{s}^{t,x}\right|\right]ds+C_{g}e^{-\rho T}\mathbb{E}\left[\left|X_{T}^{t,x}-\bar{X}_{T}^{t,x}\right|\right]\\
 & \leq & Ce^{-\rho t}\mathbb{E}\left[\sup_{t\leq s\leq T}\left|X_{s}^{t,x}-\bar{X}_{s}^{t,x}\right|\right]\leq Ce^{-\rho t}h^{\frac{1}{2}}
\end{eqnarray*}
using the strong convergence speed of the Euler scheme on $\left[t,T\right]$.
Symmetrically, the same inequality holds for $\bar{v}_{\Pi}\left(t,x,i\right)-v_{\Pi}\left(t,x,i\right)$,
ending the proof.
\end{proof}

\subsubsection{Space localization}

Recall from Subsection \ref{sub:SpaceLocIntro} the definition of
the bounded domain $\mathcal{D}_{t}^{\varepsilon}$, $t\in\left[0,T\right]$.
\begin{prop}
\label{pro:SpaceLocalization}$\forall\varepsilon>0$, there exists
$C>0$ such that for any $\left(x,i\right)\in\mathbb{R}^{d}\times\mathbb{I}_{q}$
:
\[
\left|\bar{v}_{\Pi}\left(0,x,i\right)-\bar{v}_{\Pi}^{\varepsilon}\left(0,x,i\right)\right|\leq C\varepsilon
\]
\end{prop}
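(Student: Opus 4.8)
The plan is to reproduce, almost verbatim, the comparison-of-value-functions argument already used for Lemma~\ref{lem:ErrorControl-vgt-vg} and Proposition~\ref{pro:Euler-Convergence}: bound the gap between the two suprema by the gap between the two objective functionals evaluated along one and the same (near-)optimal strategy, then control that gap through the Lipschitz continuity of the data and the defining property~\eqref{eq:EpsilonDef} of the localization domains. Concretely, for $\alpha\in\mathcal{A}_{0,i}^{\Pi}$ I keep the quantities $\bar H(0,x,\alpha)$, $\bar J(0,x,\alpha)$ introduced in \eqref{eq:Hbar}--\eqref{eq:Jbar} (with $t=0$) and introduce their projected counterparts $\bar H^{\varepsilon}$, $\bar J^{\varepsilon}$ obtained by replacing every occurrence of $\bar X$ by $\bar X^{\varepsilon}$. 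Then $\bar v_{\Pi}(0,x,i)=\sup_{\alpha}\bar J(0,x,\alpha)$ and $\bar v_{\Pi}^{\varepsilon}(0,x,i)=\sup_{\alpha}\bar J^{\varepsilon}(0,x,\alpha)$, the supremum being taken over the \emph{same} admissible set $\mathcal{A}_{0,i}^{\Pi}$.

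Picking a strategy $\alpha^{*}$ that is optimal (or, to avoid an existence claim, merely $\eta$-optimal) for $\bar v_{\Pi}$, one gets
\[
\bar v_{\Pi}(0,x,i)-\bar v_{\Pi}^{\varepsilon}(0,x,i)\;\le\;\bar J(0,x,\alpha^{*})-\bar J^{\varepsilon}(0,x,\alpha^{*}).
\]
The key observation is that the switching-cost term $\sum_{0\le\tau_n\le T}k(\tau_n,\iota_{n-1},\iota_n)$ depends neither on the state process (by Assumption~\ref{asm:Lipschitz-f-g}, $k$ has no $x$-argument) nor on which process one evaluates it along, so it cancels pathwise between $\bar H$ and $\bar H^{\varepsilon}$ since $\alpha^{*}$ is common. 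What remains is
\begin{align*}
\bar J(0,x,\alpha^{*})-\bar J^{\varepsilon}(0,x,\alpha^{*})
&=\int_{0}^{T}e^{-\rho\pi(s)}\,\mathbb{E}\!\left[\tilde f\!\left(\pi(s),\bar X_{s}^{0,x},I_{s}^{\alpha^{*}}\right)-\tilde f\!\left(\pi(s),\bar X_{s}^{\varepsilon,0,x},I_{s}^{\alpha^{*}}\right)\right]ds\\
&\quad+\mathbb{E}\!\left[g\!\left(T,\bar X_{T}^{0,x},I_{T}^{\alpha^{*}}\right)-g\!\left(T,\bar X_{T}^{\varepsilon,0,x},I_{T}^{\alpha^{*}}\right)\right].
\end{align*}
Invoking the Lipschitz property of $\tilde f$ (Assumption~\ref{asm:Lipschitz-f-g}) and of $g$ (inequality~\eqref{eq:gLipschitz}), then the bound $\mathbb{E}[|\bar X_{s}-\bar X_{s}^{\varepsilon}|]=\mathbb{E}[|\bar X_{s}-\mathcal{P}_{s}^{\varepsilon}(\bar X_{s})|]\le\varepsilon$ from~\eqref{eq:EpsilonDef}, this is at most $C_f\,\varepsilon\int_{0}^{T}e^{-\rho\pi(s)}ds+C_g\,e^{-\rho T}\varepsilon\le C\varepsilon$, the discount factor $e^{-\rho\pi(s)}$ keeping the time integral bounded uniformly in $T$ (just as in the proof of Proposition~\ref{pro:Time-Convergence}). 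Exchanging the roles of $\bar X$ and $\bar X^{\varepsilon}$ gives the reverse inequality, hence the claim.

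The only points that require any care — and thus the sole genuinely delicate part of an otherwise routine estimate — are the following. First, that a single control may legitimately be fed to both $\bar X$ and $\bar X^{\varepsilon}$: this is automatic here, since the admissible strategies in $\mathcal{A}_{0,i}^{\Pi}$ are $\mathbb{F}$-adapted objects, not feedback laws, so the admissible set is literally identical for the two problems. Second, because $\bar X^{\varepsilon}$ is no longer a Markov/Euler process, the existence of an optimal strategy for $\bar v_{\Pi}^{\varepsilon}$ is not granted a priori — but it is never used in the direction treated above, and for the symmetric direction one either appeals to the finite-grid backward-induction and measurable-selection construction or simply works with $\eta$-optimal strategies and lets $\eta\downarrow0$. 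Third, one must keep track of the discount factor $e^{-\rho\pi(s)}$ throughout so that the final constant $C$ does not deteriorate as the horizon $T$ grows.
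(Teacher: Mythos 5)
Your proposal is correct and follows essentially the same route as the paper: compare $\bar J$ and $\bar J^{\varepsilon}$ strategy-by-strategy (the switching costs cancel since $k$ is state-independent), bound the $f$- and $g$-terms via their Lipschitz constants and the defining property \eqref{eq:EpsilonDef}, and pass to the suprema. The only cosmetic difference is that the paper bounds $\bar J-\bar J^{\varepsilon}$ uniformly over all $\alpha$ and deduces the bound on the value functions directly, whereas you route it through an ($\eta$-)optimal strategy — an equivalent formulation of the same estimate.
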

\begin{proof}
Recall the definitions of $\bar{H}\left(t,x,\alpha\right)$ (equation
\eqref{eq:Hbar}) and $\bar{J}\left(t,x,\alpha\right)$ (equation
\eqref{eq:Jbar}), and define the quantities $\bar{H}^{\varepsilon}\left(t,x,\alpha\right)$
and $\bar{J}^{\varepsilon}\left(t,x,\alpha\right)$ like $\bar{H}\left(t,x,\alpha\right)$
and $\bar{J}\left(t,x,\alpha\right)$ but with $\bar{X}$ replaced
by $\bar{X}^{\varepsilon}$. Then, for every $\left(t,x,i\right)\in\Pi\times\mathbb{R}^{d}\times\mathbb{I}_{q}$
and $\alpha\in\mathcal{A}_{t,i}^{\Pi}$:
\begin{eqnarray*}
\bar{J}\left(t,x,\alpha\right) & = & \bar{J}^{\varepsilon}\left(t,x,\alpha\right)+\int_{t}^{T}\mathbb{E}\left[f\left(\pi\left(s\right),\bar{X}_{s}^{t,x},I_{s}^{\alpha}\right)-f\left(\pi\left(s\right),\bar{X}_{s}^{\varepsilon,t,x},I_{s}^{\alpha}\right)\right]ds\\
 &  & +\,\mathbb{E}\left[g\left(T,\bar{X}_{T}^{t,x},I_{T}^{\alpha}\right)-g\left(T,\bar{X}_{T}^{\varepsilon,t,x},I_{T}^{\alpha}\right)\right]
\end{eqnarray*}
But:
\begin{eqnarray*}
 &  & \left|\int_{t}^{T}\mathbb{E}\left[f\left(\pi\left(s\right),\bar{X}_{s}^{t,x},I_{s}^{\alpha}\right)-f\left(\pi\left(s\right),\bar{X}_{s}^{\varepsilon,t,x},I_{s}^{\alpha}\right)\right]ds+\mathbb{E}\left[g\left(T,\bar{X}_{T}^{t,x},I_{T}^{\alpha}\right)-g\left(T,\bar{X}_{T}^{\varepsilon,t,x},I_{T}^{\alpha}\right)\right]\right|\\
 &  & \leq C_{f}\int_{t}^{T}e^{-\rho s}\mathbb{E}\left[\left|\bar{X}_{s}^{t,x}-\bar{X}_{s}^{\varepsilon,t,x}\right|\right]ds+C_{g}e^{-\rho T}\mathbb{E}\left[\left|\bar{X}_{T}^{t,x}-\bar{X}_{T}^{\varepsilon,t,x}\right|\right]
\end{eqnarray*}
It follows that:
\[
\left|\bar{v}_{\Pi}\left(t,x,i\right)-\bar{v}_{\Pi}^{\varepsilon}\left(t,x,i\right)\right|\leq C_{f}\int_{t}^{T}e^{-\rho s}\mathbb{E}\left[\left|\bar{X}_{s}^{t,x}-\bar{X}_{s}^{\varepsilon,t,x}\right|\right]ds+C_{g}e^{-\rho T}\mathbb{E}\left[\left|\bar{X}_{T}^{t,x}-\bar{X}_{T}^{\varepsilon,t,x}\right|\right]
\]
In particular, at $t=0$, using equation \eqref{eq:EpsilonDef}, $\exists C>0$
such that:
\[
\left|\bar{v}_{\Pi}\left(0,x,i\right)-\bar{v}_{\Pi}^{\varepsilon}\left(0,x,i\right)\right|\leq C\varepsilon
\]

\end{proof}

\subsubsection{Conditional expectation approximation}

From now on the domains $\mathcal{D}_{t}^{\varepsilon}$, $t\in\left[0,T\right]$
are fixed once and for all, and, with a slight abuse of notation,
we will drop $\varepsilon$ from the subsequent notations.

We start with preliminary remarks. Recalling Subsection \ref{sub:CondExpIntro},
with this choice of basis, $\tilde{\lambda}_{i}^{t_{n}}\left(\varphi\right)$
(equation \eqref{eq:regr0}) and $\hat{\lambda}_{i}^{t_{n}}\left(\varphi\right)$
(equation \eqref{eq:regrM}) become:
\begin{align*}
\tilde{\lambda}_{i}^{t_{n}}\left(\varphi\right) & =\frac{\mathbb{E}\left[\varphi\left(t_{n+1},\bar{X}_{t_{n+1}},i\right)\mathbf{1}\left\{ \bar{X}_{t_{n}}\in B_{t_{n}}^{k}\right\} \right]}{\mathbb{P}\left(\bar{X}_{t_{n}}\in B_{t_{n}}^{k}\right)}=\mathbb{E}\left[\left.\varphi\left(t_{n+1},\bar{X}_{t_{n+1}},i\right)\right|\bar{X}_{t_{n}}\in B_{t_{n}}^{k}\right]\,,\,\,1\leq k\leq K_{\varepsilon}\\
\hat{\lambda}_{i}^{t_{n}}\left(\varphi\right) & =\frac{\frac{1}{M}\sum_{m=1}^{M}\varphi\left(t_{n+1},\bar{X}_{t_{n+1}}^{m},i\right)\mathbf{1}\left\{ \bar{X}_{t_{n}}^{m}\in B_{t_{n}}^{k}\right\} }{\frac{1}{M}\sum_{m=1}^{M}\mathbf{1}\left\{ \bar{X}_{t_{n}}^{m}\in B_{t_{n}}^{k}\right\} }\,,\,\,1\leq k\leq K_{\varepsilon}
\end{align*}

Extending these equations, define
\begin{align}
\tilde{\lambda}_{i}^{t_{n},x}\left(\varphi\right) & :=\frac{\mathbb{E}\left[\varphi\left(t_{n+1},\bar{X}_{t_{n+1}},i\right)\mathbf{1}\left\{ \bar{X}_{t_{n}}\in B_{t_{n}}\left(x\right)\right\} \right]}{\mathbb{P}\left(\bar{X}_{t_{n}}\in B_{t_{n}}\left(x\right)\right)}=\mathbb{E}\left[\left.\varphi\!\left(t_{n+1},\bar{X}_{t_{n+1}},i\right)\right|\bar{X}_{t_{n}}\in B_{t_{n}}\left(x\right)\right]\label{eq:lambda(x)tilde}\\
\hat{\lambda}_{i}^{t_{n},x}\left(\varphi\right) & :=\frac{\frac{1}{M}\sum_{m=1}^{M}\varphi\left(t_{n+1},\bar{X}_{t_{n+1}}^{m},i\right)\mathbf{1}\left\{ \bar{X}_{t_{n}}^{m}\in B_{t_{n}}\left(x\right)\right\} }{\frac{1}{M}\sum_{m=1}^{M}\mathbf{1}\left\{ \bar{X}_{t_{n}}^{m}\in B_{t_{n}}\left(x\right)\right\} }=\frac{1}{M_{t_{n}}^{x}}\sum_{m\in\mathcal{M}_{t_{n}}^{x}}\!\!\varphi\!\left(t_{n+1},\bar{X}_{t_{n+1}}^{m},i\right)\label{eq:lambda(x)hat}
\end{align}

for every $\left(t_{n},x,i\right)\in\Pi\tilde{\times}\mathcal{D}_{\Pi}^{\varepsilon}\times\mathbb{I}_{q}$,
where $\forall x\in\mathcal{D}_{t_{n}}^{\varepsilon}$, $B_{t_{n}}\left(x\right)$
is the unique hypercube in the partition which contains $x$ at time
$t_{n}$, $\mathcal{M}_{t_{n}}^{x}:=\left\{ m\in\left[1,M\right],\,\bar{X}_{t_{n}}^{m}\in B_{t_{n}}\left(x\right)\right\} $
and $M_{t_{n}}^{x}:=\#\mathcal{M}_{t_{n}}^{x}$.

Finally, recalling the approximated conditional expectations \eqref{eq:condExp0}
and \eqref{eq:condExpM},

define for any $\left(t_{n},x,j\right)\in\Pi\tilde{\times}\mathcal{D}_{\Pi}^{\varepsilon}\times\mathbb{I}_{q}$
and any measurable function $\varphi:\Pi\times\mathbb{R}^{d}\times\mathbb{I}_{q}\rightarrow\mathbb{R}$,
the following quantities:
\begin{eqnarray}
\Phi_{j}^{t_{n},x}\left(\varphi\right) & := & \mathbb{E}\left[\varphi\left(t_{n+1},\bar{X}_{t_{n+1}}^{t_{n},x},j\right)\right]\label{eq:phi(x)}\\
\tilde{\Phi}_{j}^{t_{n},x}\left(\varphi\right) & := & \tilde{\mathbb{E}}\left[\varphi\left(t_{n+1},\bar{X}_{t_{n+1}}^{t_{n},x},j\right)\right]=\underline{\Gamma}^{t_{n},x}\left(\text{\ensuremath{\varphi}}\right)\vee\tilde{\lambda}_{j}^{t_{n},x}\left(\varphi\right)\wedge\overline{\Gamma}^{t_{n},x}\left(\varphi\right)\label{eq:phi(x)tilde}\\
\hat{\Phi}_{j}^{t_{n},x}\left(\varphi\right) & := & \hat{\mathbb{E}}\left[\varphi\left(t_{n+1},\bar{X}_{t_{n+1}}^{t_{n},x},j\right)\right]=\underline{\Gamma}^{t_{n},x}\left(\text{\ensuremath{\varphi}}\right)\vee\hat{\lambda}_{j}^{t_{n},x}\left(\varphi\right)\wedge\overline{\Gamma}^{t_{n},x}\left(\varphi\right)\label{eq:phi(x)hat}
\end{eqnarray}
where (recalling equation \ref{eq:condExpBounds}) $\underline{\Gamma}^{t_{n},x}\left(\text{\ensuremath{\varphi}}\right)$
and $\overline{\Gamma}^{t_{n},x}\left(\varphi\right)$ are lower and
upper bounds on $\Phi_{j}^{t_{n},x}\left(\varphi\right)$:
\[
\underline{\Gamma}^{t_{n},x}\left(\text{\ensuremath{\varphi}}\right)\leq\Phi_{j}^{t_{n},x}\left(\varphi\right)\leq\overline{\Gamma}^{t_{n},x}\left(\varphi\right)
\]

\begin{rem}
\label{rk:Recurrences}These definitions are useful to express the
dynamic programming equations \eqref{eq:DDP-approx-3}, \eqref{eq:DDP-approx-4}
and \eqref{eq:DDP-approx-5}. Indeed, these equations become:
\begin{eqnarray*}
\bar{v}_{\Pi}\left(T,x,i\right) & = & g\left(T,x,i\right)\\
\bar{v}_{\Pi}\left(t_{n},x,i\right) & = & \max_{j\in\mathbb{I}_{q}}\left\{ hf\left(t_{n},x,j\right)-k\left(t_{n},i,j\right)+\Phi_{j}^{t_{n},x}\left(\bar{v}_{\Pi}\right)\right\} \,,\,\, n=N-1,\ldots,0\\
\\
\tilde{v}_{\Pi}\left(T,x,i\right) & = & g\left(T,x,i\right)\\
\tilde{v}_{\Pi}\left(t_{n},x,i\right) & = & \max_{j\in\mathbb{I}_{q}}\left\{ hf\left(t_{n},x,j\right)-k\left(t_{n},i,j\right)+\tilde{\Phi}_{j}^{t_{n},x}\left(\tilde{v}_{\Pi}\right)\right\} \,,\,\, n=N-1,\ldots,0\\
\\
\hat{v}_{\Pi}\left(T,x,i\right) & = & g\left(T,x,i\right)\\
\hat{v}_{\Pi}\left(t_{n},x,i\right) & = & \max_{j\in\mathbb{I}_{q}}\left\{ hf\left(t_{n},x,j\right)-k\left(t_{n},i,j\right)+\hat{\Phi}_{j}^{t_{n},x}\left(\hat{v}_{\Pi}\right)\right\} \,,\,\, n=N-1,\ldots,0
\end{eqnarray*}

\end{rem}

\begin{rem}
\label{rk:RegrBound}For $\varphi=\bar{v}_{\Pi}$, we can easily explicit
bounding functions $\underline{\Gamma}^{t_{n},x}\left(\bar{v}_{\Pi}\right)$
and $\overline{\Gamma}^{t_{n},x}\left(\bar{v}_{\Pi}\right)$ of $\Phi_{j}^{t_{n},x}\left(\bar{v}_{\Pi}\right)$.
Indeed, using the growth conditions on $f$ and $g$, the nonnegativity
of $k$ and the definition of $C\left(T,\varepsilon\right)$ (see
Paragraph \ref{sub:SpaceLocIntro}), there exists $C>0$ such that
$\forall\left(t_{n},x,j\right)\in\Pi\tilde{\times}\mathcal{D}_{\Pi}^{\varepsilon}\times\mathbb{I}_{q}$:
\begin{eqnarray}
\left|\bar{v}_{\Pi}\left(t_{n},x,j\right)\right| & \leq & Ce^{-\rho t_{n}}\left(1+C\left(T,\varepsilon\right)\right)\label{eq:vBarPiBound}\\
\left|\Phi_{j}^{t_{n},x}\left(\bar{v}_{\Pi}\right)\right| & \leq & \Gamma^{t_{n}}\left(\bar{v}_{\Pi}\right):=Ce^{-\rho t_{n}}\left(1+C\left(T,\varepsilon\right)\right)\label{eq:regrBound}
\end{eqnarray}
Moreover, the same is true for $\varphi=\tilde{v}_{\Pi}$: there exists
$C>0$ such that $\forall\left(t_{n},x,j\right)\in\Pi\tilde{\times}\mathcal{D}_{\Pi}^{\varepsilon}\times\mathbb{I}_{q}$:
\begin{eqnarray}
\left|\tilde{v}_{\Pi}\left(t_{n},x,j\right)\right| & \leq & Ce^{-\rho t_{n}}\left(1+C\left(T,\varepsilon\right)\right)\label{eq:vTildePiBound}\\
\left|\tilde{\Phi}_{j}^{t_{n},x}\left(\tilde{v}_{\Pi}\right)\right| & \leq & \Gamma^{t_{n}}\left(\tilde{v}_{\Pi}\right):=Ce^{-\rho t_{n}}\left(1+C\left(T,\varepsilon\right)\right)\label{eq:regrTildeBound}
\end{eqnarray}
Finally, we impose the same bound for the definition of $\hat{v}_{\Pi}$,
i.e. $\Gamma^{t_{n}}\left(\hat{v}_{\Pi}\right):=\Gamma^{t_{n}}\left(\bar{v}_{\Pi}\right)$.
\end{rem}

Now we can start the assessment of the regression error.
\begin{lem}
\label{lem:LipschitzEuler}Consider a measurable function $\varphi:\Pi\times\mathbb{R}^{d}\times\mathbb{I}_{q}\rightarrow\mathbb{R}$.
Suppose that, for a fixed $t_{n+1}\in\Pi$, it is Lipschitz with constant
$C_{n+1}$, uniformly in $j$: $\forall\left(x_{1},x_{2},j\right)\in\mathbb{R}^{d}\times\mathbb{R}^{d}\times\mathbb{I}_{q}$

\begin{equation}
\left|\varphi\left(t_{n+1},x_{1},j\right)-\varphi\left(t_{n+1},x_{2},j\right)\right|\leq C_{n+1}\left|x_{1}-x_{2}\right|\label{eq:VarphiLipschitz}
\end{equation}

Then $\Phi_{j}^{t_{n},x}\left(\varphi\right)$ is Lipschitz with constant
$C_{n+1}\left(1+Lh\right)$, uniformly in $j$, where $L:=C_{b}+\frac{C_{\sigma}^{2}}{2}>0$.\end{lem}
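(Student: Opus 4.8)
The plan is to estimate directly, for two points $x_1,x_2\in\mathbb{R}^d$ and a fixed switching state $j$, the difference
\[
\left|\Phi_j^{t_n,x_1}(\varphi)-\Phi_j^{t_n,x_2}(\varphi)\right|
=\left|\mathbb{E}\!\left[\varphi\!\left(t_{n+1},\bar X_{t_{n+1}}^{t_n,x_1},j\right)\right]-\mathbb{E}\!\left[\varphi\!\left(t_{n+1},\bar X_{t_{n+1}}^{t_n,x_2},j\right)\right]\right|.
\]
Using the Lipschitz hypothesis \eqref{eq:VarphiLipschitz} in $x$ and passing the absolute value inside the expectation, this is bounded by
\[
C_{n+1}\,\mathbb{E}\!\left[\left|\bar X_{t_{n+1}}^{t_n,x_1}-\bar X_{t_{n+1}}^{t_n,x_2}\right|\right].
\]
So the whole lemma reduces to the statement that, over one Euler step of length $h$, the map $x\mapsto\bar X_{t_{n+1}}^{t_n,x}$ has Lipschitz constant at most $1+Lh$ in $L_1$ (indeed in $L_2$, which then gives $L_1$ by Jensen).

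For that one-step estimate I would write the Euler increment explicitly: since the scheme is frozen at the left endpoint $t_n$,
\[
\bar X_{t_{n+1}}^{t_n,x}=x+b(t_n,x)\,h+\sigma(t_n,x)\,\left(W_{t_{n+1}}-W_{t_n}\right).
\]
Hence, writing $\Delta W:=W_{t_{n+1}}-W_{t_n}$,
\[
\bar X_{t_{n+1}}^{t_n,x_1}-\bar X_{t_{n+1}}^{t_n,x_2}
=(x_1-x_2)+\left(b(t_n,x_1)-b(t_n,x_2)\right)h+\left(\sigma(t_n,x_1)-\sigma(t_n,x_2)\right)\Delta W.
\]
Taking the squared Euclidean norm and then expectations, the cross term involving $\Delta W$ vanishes because $\Delta W$ is independent of the (deterministic) coefficients and has mean zero, so
\[
\mathbb{E}\!\left[\left|\bar X_{t_{n+1}}^{t_n,x_1}-\bar X_{t_{n+1}}^{t_n,x_2}\right|^2\right]
=\left|(x_1-x_2)+\left(b(t_n,x_1)-b(t_n,x_2)\right)h\right|^2
+h\left\|\sigma(t_n,x_1)-\sigma(t_n,x_2)\right\|^2,
\]
using $\mathbb{E}[\Delta W(\Delta W)^\top]=hI_d$. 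Now apply Assumption \ref{asm:SDE-ExistenceUnicity}: the first term is at most $(1+C_bh)^2|x_1-x_2|^2$ and the second is at most $C_\sigma^2 h\,|x_1-x_2|^2$, so the right-hand side is bounded by $\left((1+C_bh)^2+C_\sigma^2 h\right)|x_1-x_2|^2$. A short computation gives $(1+C_bh)^2+C_\sigma^2 h=1+(2C_b+C_\sigma^2)h+C_b^2h^2\le\left(1+(C_b+\tfrac{C_\sigma^2}{2})h\right)^2=(1+Lh)^2$ for the stated $L=C_b+\tfrac{C_\sigma^2}{2}$ (the inequality holds because expanding $(1+Lh)^2$ produces exactly $1+(2C_b+C_\sigma^2)h$ plus the nonnegative extra term $L^2h^2\ge C_b^2h^2$). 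Taking square roots and then using $\mathbb{E}|\cdot|\le(\mathbb{E}|\cdot|^2)^{1/2}$ yields $\mathbb{E}\!\left[\left|\bar X_{t_{n+1}}^{t_n,x_1}-\bar X_{t_{n+1}}^{t_n,x_2}\right|\right]\le(1+Lh)|x_1-x_2|$, and combining with the first display finishes the proof, uniformly in $j$ since $C_{n+1}$ was uniform in $j$.

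The only mild subtlety — hardly an obstacle — is the cancellation of the martingale cross term, which requires that the Euler coefficients be evaluated at the deterministic starting point $x$ (true here since $t_n$ is a grid point, so $\pi(t_n)=t_n$ and $\bar X_{\pi(t_n)}=x$), so that $\Delta W$ is genuinely independent of them; and choosing to work in $L_2$ rather than $L_1$ so that this orthogonality can be exploited cleanly. Everything else is a direct application of the linear-growth/Lipschitz bounds on $b$ and $\sigma$ together with the elementary algebraic inequality above.
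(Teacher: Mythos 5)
Your proposal is correct and follows essentially the same route as the paper: both reduce to the one-step Euler increment, expand the square so the Gaussian cross term vanishes, apply the Lipschitz bounds on $b$ and $\sigma$, and use the elementary inequality $1+(2C_b+C_\sigma^2)h+C_b^2h^2\le(1+Lh)^2$ with $L=C_b+\tfrac{C_\sigma^2}{2}$ before returning to $L_1$ via Cauchy--Schwarz. The only difference is cosmetic (you apply the Lipschitz bound on $\varphi$ before passing to $L_2$, and you spell out the multidimensional covariance $\mathbb{E}\left[\Delta W\,\Delta W^{\top}\right]=hI_d$, which the paper leaves implicit).
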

\begin{proof}
Choose $\left(t_{n},j,x_{1},x_{2}\right)\in\Pi\times\mathbb{I}_{q}\times\mathbb{R}^{d}\times\mathbb{R}^{d}$.
Then:
\begin{eqnarray*}
\left|\Phi_{j}^{t_{n},x_{1}}\left(\varphi\right)-\Phi_{j}^{t_{n},x_{2}}\left(\varphi\right)\right| & = & \left|\mathbb{E}\left[\varphi\left(t_{n+1},\bar{X}_{t_{n+1}}^{t_{n},x_{1}},j\right)-\varphi\left(t_{n+1},\bar{X}_{t_{n+1}}^{t_{n},x_{2}},j\right)\right]\right|\\
 & \leq & \left\Vert \varphi\left(t_{n+1},\bar{X}_{t_{n+1}}^{t_{n},x_{1}},j\right)-\varphi\left(t_{n+1},\bar{X}_{t_{n+1}}^{t_{n},x_{2}},j\right)\right\Vert _{L_{1}}\\
 & \leq & \left\Vert \varphi\left(t_{n+1},\bar{X}_{t_{n+1}}^{t_{n},x_{1}},j\right)-\varphi\left(t_{n+1},\bar{X}_{t_{n+1}}^{t_{n},x_{2}},j\right)\right\Vert _{L_{2}}
\end{eqnarray*}
Now, using equations \eqref{eq:VarphiLipschitz} and \eqref{eq:SDE-Euler},
and $G$ denoting a $d$-dimensional standard Gaussian random variable,
we have
\begin{eqnarray*}
 &  & \mathbb{E}\left[\left(\varphi\left(t_{n+1},\bar{X}_{t_{n+1}}^{t_{n},x_{1}},j\right)-\varphi\left(t_{n+1},\bar{X}_{t_{n+1}}^{t_{n},x_{2}},j\right)\right)^{2}\right]\\
 & \leq & C_{n+1}^{2}\mathbb{E}\left[\left(\bar{X}_{t_{n+1}}^{t_{n},x_{1}}-\bar{X}_{t_{n+1}}^{t_{n},x_{2}}\right)^{2}\right]\\
 & \leq & C_{n+1}^{2}\mathbb{E}\left[\left(x_{1}-x_{2}+h\left(b\left(t_{n},x_{1}\right)-b\left(t_{n},x_{2}\right)\right)+\sqrt{h}\left(\sigma\left(t_{n},x_{1}\right)-\sigma\left(t_{n},x_{2}\right)\right)G\right)^{2}\right]\\
 & = & C_{n+1}^{2}\left\{ \left(x_{1}-x_{2}+h\left(b\left(t_{n},x_{1}\right)-b\left(t_{n},x_{2}\right)\right)\right)^{2}+h\mathbb{E}\left[\left(\left(\sigma\left(t_{n},x_{1}\right)-\sigma\left(t_{n},x_{2}\right)\right)G\right)^{2}\right]\right\} \\
 & \leq & C_{n+1}^{2}\left(x_{1}-x_{2}\right)^{2}\left\{ 1+\left(2C_{b}+C_{\sigma}^{2}\right)h+C_{b}^{2}h^{2}\right\} \\
 & \leq & C_{n+1}^{2}\left(x_{1}-x_{2}\right)^{2}\left(C_{b}+\frac{C_{\sigma}^{2}}{2}\right)^{2}\left(\frac{1}{C_{b}+\frac{C_{\sigma}^{2}}{2}}+h\right)^{2}\,.
\end{eqnarray*}
Thus:
\[
\left|\Phi_{j}^{t_{n},x_{1}}\left(\varphi\right)-\Phi_{j}^{t_{n},x_{2}}\left(\varphi\right)\right|\leq C_{n+1}\left(1+\left(C_{b}+\frac{C_{\sigma}^{2}}{2}\right)h\right)\left|x_{1}-x_{2}\right|
\]
\end{proof}
\begin{lem}
\label{lem:RegApprox}Consider again a function $\varphi:\Pi\times\mathbb{R}^{d}\times\mathbb{I}_{q}\rightarrow\mathbb{R}$
such that \eqref{eq:VarphiLipschitz} holds for a given $t_{n+1}\in\Pi$.
Then, $\forall\left(x,j\right)\in\mathcal{D}_{t_{n}}^{\varepsilon}\times\mathbb{I}_{q}$:
\[
\left|\tilde{\lambda}_{j}^{t_{n},x}\left(\varphi\right)-\Phi_{j}^{t_{n},x}\left(\varphi\right)\right|\leq C_{n+1}\delta\left(1+Lh\right)\,.
\]
In particular:
\begin{equation}
\left|\tilde{\Phi}_{j}^{t_{n},x}\left(\varphi\right)-\Phi_{j}^{t_{n},x}\left(\varphi\right)\right|\leq C_{n+1}\delta\left(1+Lh\right)\label{eq:RegApprox}
\end{equation}
\end{lem}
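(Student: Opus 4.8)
\emph{Proof sketch.} The plan is to reduce the first bound to Lemma~\ref{lem:LipschitzEuler} by means of the Markov property of the Euler scheme, and then to deduce the ``in particular'' statement for free from the fact that truncation to an interval containing the target is a contraction. Recall from \eqref{eq:lambda(x)tilde} that $\tilde\lambda_j^{t_n,x}(\varphi)=\mathbb{E}[\varphi(t_{n+1},\bar X_{t_{n+1}},j)\mid \bar X_{t_n}\in B_{t_n}(x)]$ and from \eqref{eq:phi(x)} that $\Phi_j^{t_n,x}(\varphi)=\mathbb{E}[\varphi(t_{n+1},\bar X_{t_{n+1}}^{t_n,x},j)]$, so both quantities are built from the same one-step transition of $\bar X$. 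Since $\bar X_{t_{n+1}}$ is a deterministic function of $\bar X_{t_n}$ and of the Gaussian increment $W_{t_{n+1}}-W_{t_n}$, which is independent of $\mathcal F_{t_n}$, one has $\mathbb{E}[\varphi(t_{n+1},\bar X_{t_{n+1}},j)\mid \bar X_{t_n}]=\Phi_j^{t_n,\bar X_{t_n}}(\varphi)$; as $\{\bar X_{t_n}\in B_{t_n}(x)\}$ is $\sigma(\bar X_{t_n})$-measurable, the tower property yields
\[
\tilde\lambda_j^{t_n,x}(\varphi)=\mathbb{E}\bigl[\,\Phi_j^{t_n,\bar X_{t_n}}(\varphi)\ \big|\ \bar X_{t_n}\in B_{t_n}(x)\,\bigr],
\]
i.e.\ $\tilde\lambda_j^{t_n,x}(\varphi)$ is an average of the function $y\mapsto\Phi_j^{t_n,y}(\varphi)$ over the cell $B_{t_n}(x)$.

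The core estimate then follows quickly. Both $x$ and, on the conditioning event, $\bar X_{t_n}$ lie in the hypercube $B_{t_n}(x)$, all of whose edges have length at most $\delta$, so $|\bar X_{t_n}-x|$ is controlled by the diameter of that cell, which is of order $\delta$. Feeding this into Lemma~\ref{lem:LipschitzEuler} --- which gives that $y\mapsto\Phi_j^{t_n,y}(\varphi)$ is Lipschitz with constant $C_{n+1}(1+Lh)$ --- together with Jensen's inequality inside the conditional expectation, one gets
\[
\bigl|\tilde\lambda_j^{t_n,x}(\varphi)-\Phi_j^{t_n,x}(\varphi)\bigr|\le\mathbb{E}\bigl[\,\bigl|\Phi_j^{t_n,\bar X_{t_n}}(\varphi)-\Phi_j^{t_n,x}(\varphi)\bigr|\ \big|\ \bar X_{t_n}\in B_{t_n}(x)\,\bigr]\le C_{n+1}(1+Lh)\,\delta .
\]
For \eqref{eq:RegApprox}, recall from \eqref{eq:phi(x)tilde} that $\tilde\Phi_j^{t_n,x}(\varphi)$ is the truncation of $\tilde\lambda_j^{t_n,x}(\varphi)$ to $[\underline\Gamma^{t_n,x}(\varphi),\overline\Gamma^{t_n,x}(\varphi)]$, an interval which by \eqref{eq:condExpBounds} contains $\Phi_j^{t_n,x}(\varphi)$; as the projection onto a closed interval is $1$-Lipschitz and fixes every point of that interval, $|\tilde\Phi_j^{t_n,x}(\varphi)-\Phi_j^{t_n,x}(\varphi)|\le|\tilde\lambda_j^{t_n,x}(\varphi)-\Phi_j^{t_n,x}(\varphi)|$, which yields \eqref{eq:RegApprox}.

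I do not expect a genuine obstacle in this lemma. The two points deserving a little care are the correct use of the Markov property of the Euler scheme, so as to rewrite $\tilde\lambda$ as a cell-average of $\Phi$, and the harmless bookkeeping of the dimensional constant attached to the Euclidean diameter of a cell of edge length $\delta$ (here absorbed into the convention on $\delta$); everything else is Jensen's inequality combined with the already established Lemma~\ref{lem:LipschitzEuler}.
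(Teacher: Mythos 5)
Your proof is correct and follows essentially the same route as the paper: both arguments reduce the bound to the Lipschitz continuity of $y\mapsto\Phi_{j}^{t_{n},y}\left(\varphi\right)$ from Lemma \ref{lem:LipschitzEuler} applied over the cell $B_{t_{n}}\left(x\right)$ of size $\delta$ (the paper sandwiches both $\Phi_{j}^{t_{n},x}\left(\varphi\right)$ and $\tilde{\lambda}_{j}^{t_{n},x}\left(\varphi\right)$ between the min and max of $\Phi_{j}^{t_{n},\tilde{x}}\left(\varphi\right)$ over the cell, while you express $\tilde{\lambda}$ as a cell-average via the Markov property and apply Jensen — a cosmetic difference), and both dispose of the truncated quantity $\tilde{\Phi}$ by the fact that projection onto $\left[\underline{\Gamma},\overline{\Gamma}\right]$, an interval containing $\Phi_{j}^{t_{n},x}\left(\varphi\right)$, cannot increase the error.
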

\begin{proof}
Recalling the definitions of $B_{t_{n}}\left(x\right)$, of $\tilde{\lambda}_{j}^{t_{n},x}\left(\varphi\right)$
(equation \eqref{eq:lambda(x)tilde}) and of $\Phi_{j}^{t_{n},x}\left(\varphi\right)$
(equation \eqref{eq:phi(x)}), simply remark that:
\begin{eqnarray*}
\min_{\tilde{x}\in B_{t_{n}}\left(x\right)}\Phi_{j}^{t_{n},\tilde{x}}\left(\varphi\right) & \leq\Phi_{j}^{t_{n},x}\left(\varphi\right) & \leq\max_{\tilde{x}\in B_{t_{n}}\left(x\right)}\Phi_{j}^{t_{n},\tilde{x}}\left(\varphi\right)\\
\min_{\tilde{x}\in B_{t_{n}}\left(x\right)}\Phi_{j}^{t_{n},\tilde{x}}\left(\varphi\right) & \leq\tilde{\lambda}_{j}^{t_{n},x}\left(\varphi\right) & \leq\max_{\tilde{x}\in B_{t_{n}}\left(x\right)}\Phi_{j}^{t_{n},\tilde{x}}\left(\varphi\right)\,.
\end{eqnarray*}
Now, using Lemma \ref{lem:LipschitzEuler}:
\begin{eqnarray*}
\left|\tilde{\lambda}_{j}^{t_{n},x}\left(\varphi\right)-\Phi_{j}^{t_{n},x}\left(\varphi\right)\right| & \leq & \max_{\tilde{x}\in B_{t_{n}}\left(x\right)}\Phi_{j}^{t_{n},\tilde{x}}\left(\varphi\right)-\min_{\tilde{x}\in B_{t_{n}}\left(x\right)}\Phi_{j}^{t_{n},\tilde{x}}\left(\varphi\right)\\
 & \leq & C_{n+1}\left(1+Lh\right)\max_{\left(x_{1},x_{2}\right)\in B_{t_{n}}\left(x\right)^{2}}\left|x_{1}-x_{2}\right|\\
 & \leq & C_{n+1}\left(1+Lh\right)\delta
\end{eqnarray*}
\end{proof}
\begin{lem}
\label{lem:vLipschitz}$\forall\left(t_{n},x_{1},x_{2},i\right)\in\Pi\times\left(\mathbb{R}^{d}\right)^{2}\times\mathbb{I}_{q}$:
\begin{equation}
\left|\bar{v}_{\Pi}\left(t_{n},x_{1},i\right)-\bar{v}_{\Pi}\left(t_{n},x_{2},i\right)\right|\leq C_{n}\left|x_{1}-x_{2}\right|\label{eq:vbarLip}
\end{equation}
where:
\begin{eqnarray}
C_{N} & = & e^{-\rho t_{N}}C_{g}\nonumber \\
C_{n} & = & hC_{f}e^{-\rho t_{n}}+C_{n+1}\left(1+Lh\right)\,,\,\, n=N-1,\ldots,0\label{eq:LipCoefRec}
\end{eqnarray}
In particular, $\exists C>0$ such that $\forall n=0,1,\ldots,N$:
\begin{equation}
C_{n}\leq Ce^{-\rho t_{n}}e^{L\left(T-t_{n}\right)}\label{eq:LipUBound}
\end{equation}
\end{lem}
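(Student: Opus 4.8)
The plan is to establish \eqref{eq:vbarLip} together with the recursion \eqref{eq:LipCoefRec} by backward induction on $n$, and then to deduce \eqref{eq:LipUBound} by solving that recursion. The induction rests on the dynamic programming representation of $\bar{v}_{\Pi}$ recalled in Remark \ref{rk:Recurrences}, namely $\bar{v}_{\Pi}(t_n,x,i)=\max_{j\in\mathbb{I}_q}\Psi_j^{n,i}(x)$ with $\Psi_j^{n,i}(x):=hf(t_n,x,j)-k(t_n,i,j)+\Phi_j^{t_n,x}(\bar{v}_{\Pi})$, and on Lemma \ref{lem:LipschitzEuler}, which controls the Lipschitz constant of $x\mapsto\Phi_j^{t_n,x}(\varphi)$ in terms of that of $\varphi(t_{n+1},\cdot,\cdot)$.

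The base case $n=N$ is immediate: since $\bar{v}_{\Pi}(T,\cdot,i)=g(T,\cdot,i)$ with $g\in\Theta_{g_f}$, estimate \eqref{eq:gLipschitz} gives exactly the constant $C_N=e^{-\rho t_N}C_g$. For the inductive step, I would assume $x\mapsto\bar{v}_{\Pi}(t_{n+1},x,j)$ is $C_{n+1}$-Lipschitz uniformly in $j$, and treat $\Psi_j^{n,i}$ term by term: $hf(t_n,\cdot,j)=he^{-\rho t_n}\tilde{f}(t_n,\cdot,j)$ is $hC_fe^{-\rho t_n}$-Lipschitz in $x$ by Assumption \ref{asm:Lipschitz-f-g}; $-k(t_n,i,j)$ does not depend on $x$ --- this is exactly where the standing assumption that the switching cost is independent of the state variable enters; and $\Phi_j^{t_n,x}(\bar{v}_{\Pi})$ is $C_{n+1}(1+Lh)$-Lipschitz in $x$ by Lemma \ref{lem:LipschitzEuler}. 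Hence each $\Psi_j^{n,i}$ is $C_n$-Lipschitz with $C_n=hC_fe^{-\rho t_n}+C_{n+1}(1+Lh)$, uniformly in $(i,j)$. It then suffices to recall that a finite maximum of $C_n$-Lipschitz functions is $C_n$-Lipschitz: if $j^{\star}$ realizes $\max_j\Psi_j^{n,i}(x_1)$ then $\max_j\Psi_j^{n,i}(x_1)-\max_j\Psi_j^{n,i}(x_2)\leq\Psi_{j^{\star}}^{n,i}(x_1)-\Psi_{j^{\star}}^{n,i}(x_2)\leq C_n|x_1-x_2|$, and symmetrically. This gives \eqref{eq:vbarLip}--\eqref{eq:LipCoefRec}.

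To obtain \eqref{eq:LipUBound}, I would unroll the linear recursion:
\[
C_n=(1+Lh)^{N-n}\,C_g\,e^{-\rho T}+hC_f\sum_{k=n}^{N-1}(1+Lh)^{k-n}e^{-\rho t_k}\,.
\]
Using $(1+Lh)^m\leq e^{Lhm}=e^{L(t_{n+m}-t_n)}$ and $e^{-\rho t_k}\leq e^{-\rho t_n}$ for $k\geq n$, one factors out $e^{-\rho t_n}e^{L(T-t_n)}$ and is left with bounding $h\sum_{k=n}^{N-1}e^{(L-\rho)(t_k-t_n)}$. The crude estimate $h(N-n)\leq T$ already yields $C_n\leq(C_g+C_fT)\,e^{-\rho t_n}e^{L(T-t_n)}$; to get a constant free of $T$ one compares the sum with $\int_{t_n}^{T}e^{(L-\rho)(s-t_n)}\,ds$ after splitting on the sign of $L-\rho$ (for $L>\rho$ the left-endpoint Riemann sum lies below this integral, which is at most $(L-\rho)^{-1}e^{L(T-t_n)}$; for $L<\rho$ the summand is $\leq1$ and the analogous comparison bounds the sum by $h+(\rho-L)^{-1}$; the non-generic case $L=\rho$ is absorbed by enlarging $L$ slightly). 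In every case $C_n\leq Ce^{-\rho t_n}e^{L(T-t_n)}$ with $C$ depending only on $C_f,C_g,L,\rho$.

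I expect the single delicate point to be this last paragraph: the backward induction is routine once Lemma \ref{lem:LipschitzEuler} is available, but producing a constant in \eqref{eq:LipUBound} that is genuinely independent of $T$ --- as demanded by the running convention on $C$ --- requires the case analysis in $L-\rho$ together with the Riemann-sum comparison; were a bound growing linearly in $T$ acceptable, the argument would reduce to the two-line induction plus a one-line geometric-series estimate.
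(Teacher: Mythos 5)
Your proof is correct and follows essentially the same route as the paper's: backward induction on $n$, with the base case from \eqref{eq:gLipschitz}, the inductive step combining the Lipschitz bound on $f$, the $x$-independence of the cost $k$, and Lemma \ref{lem:LipschitzEuler}, plus the elementary fact that a maximum of $C_{n}$-Lipschitz functions is $C_{n}$-Lipschitz. The only divergence is at \eqref{eq:LipUBound}, where the paper simply invokes the discrete Gronwall inequality while you unroll the recursion explicitly; this works, with the one small caveat that in the boundary case $L=\rho$ you should not enlarge $L$ (that only yields the weaker bound with a larger exponent) but instead bound the sum directly by $h\left(N-n\right)=T-t_{n}\leq L^{-1}e^{L\left(T-t_{n}\right)}$.
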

\begin{proof}
Recall Remark \ref{rk:Recurrences}. We prove the lemma by induction.
First, remark that, using hypothesis \eqref{eq:gLipschitz}, it holds
for $n=N$. Now, suppose that it holds for some $(n+1)\in\left[1,\ldots,N\right]$.
Then, using Lemma \ref{lem:LipschitzEuler}:
\begin{align*}
 & \bar{v}_{\Pi}\left(t_{n},x_{1},i\right)\\
 & =\max_{j\in\mathbb{I}_{q}}\left\{ hf\!\left(t_{n},x_{1},j\right)-k\!\left(t_{n},i,j\right)+\Phi_{j}^{t_{n},x_{1}}\!\left(\bar{v}_{\Pi}\right)\right\} \\
 & =\max_{j\in\mathbb{I}_{q}}\left\{ hf\!\left(t_{n},x_{2},j\right)-k\!\left(t_{n},i,j\right)+\Phi_{j}^{t_{n},x_{2}}\!\left(\bar{v}_{\Pi}\right)+h\!\left(f\!\left(t_{n},x_{1},j\right)\!-\! f\!\left(t_{n},x_{2},j\right)\right)\!+\!\left(\Phi_{j}^{t_{n},x_{1}}\!\left(\bar{v}_{\Pi}\right)-\Phi_{j}^{t_{n},x_{2}}\!\left(\bar{v}_{\Pi}\right)\right)\right\} \\
 & \leq\max_{j\in\mathbb{I}_{q}}\left\{ hf\!\left(t_{n},x_{2},j\right)-k\!\left(t_{n},i,j\right)+\Phi_{j}^{t_{n},x_{2}}\!\left(\bar{v}_{\Pi}\right)+he^{-\rho t_{n}}C_{f}\left|x_{1}-x_{2}\right|+C_{n+1}\left(1+Lh\right)\left|x_{1}-x_{2}\right|\right\} \\
 & =\bar{v}_{\Pi}\left(t_{n},x_{2},i\right)+\left(he^{-\rho t_{n}}C_{f}+C_{n+1}\left(1+Lh\right)\right)\left|x_{1}-x_{2}\right|
\end{align*}
Symmetrically, the same inequality holds for $\bar{v}_{\Pi}\left(t_{n},x_{2},i\right)-\bar{v}_{\Pi}\left(t_{n},x_{1},i\right)$,
yielding equations \eqref{eq:vbarLip} and \eqref{eq:LipCoefRec}.
Finally, use the discrete version of Gronwall's inequality to obtain
equation \eqref{eq:LipUBound}
\end{proof}

\medskip{}

\begin{prop}
\label{pro:RegConv} $\exists C>0$ s.t. $\forall\left(t,x,i\right)\in\Pi\times\mathbb{R}^{d}\times\mathbb{I}_{q}$
:
\[
\left|\bar{v}_{\Pi}\left(t,x,i\right)-\tilde{v}_{\Pi}\left(t,x,i\right)\right|\leq C\frac{\delta}{h}e^{-\rho t}\,.
\]
\end{prop}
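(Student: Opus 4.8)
The plan is to propagate the one-step regression error (Lemma \ref{lem:RegApprox}) backwards through the dynamic programming recursions for $\bar v_\Pi$ and $\tilde v_\Pi$ (as written out in Remark \ref{rk:Recurrences}), using the Lipschitz bound on $\bar v_\Pi$ from Lemma \ref{lem:vLipschitz} to control the $\delta$-term at each time step. First I would set $a_n(x,i):=\left|\bar v_\Pi(t_n,x,i)-\tilde v_\Pi(t_n,x,i)\right|$ and note $a_N\equiv 0$ since both equal $g(T,\cdot,\cdot)$. For the induction step, I subtract the two recursions at $t_n$; since $\max_j$ is $1$-Lipschitz and the $hf$ and $k$ terms are identical, I get
\[
a_n(x,i)\le\max_{j\in\mathbb I_q}\left|\Phi_j^{t_n,x}(\bar v_\Pi)-\tilde\Phi_j^{t_n,x}(\tilde v_\Pi)\right|.
\]
I then split this via the triangle inequality into $\left|\Phi_j^{t_n,x}(\bar v_\Pi)-\tilde\Phi_j^{t_n,x}(\bar v_\Pi)\right|+\left|\tilde\Phi_j^{t_n,x}(\bar v_\Pi)-\tilde\Phi_j^{t_n,x}(\tilde v_\Pi)\right|$. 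The first piece is bounded by Lemma \ref{lem:RegApprox} (applied with $\varphi=\bar v_\Pi$, which is Lipschitz in $x$ with constant $C_{n+1}$ by Lemma \ref{lem:vLipschitz}) by $C_{n+1}\delta(1+Lh)$. The second piece: since $\tilde\Phi_j^{t_n,x}$ is built from the conditional expectation $\tilde\lambda_j^{t_n,x}=\mathbb E[\,\cdot\mid\bar X_{t_n}\in B_{t_n}(x)]$ followed by a truncation, and both truncation and conditional expectation are non-expansive, it is at most $\sup_{\tilde x\in B_{t_n}(x)}a_{n+1}(\tilde x,j)$.

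Next I would pass to a uniform-in-$(x,i)$ bound. Setting $A_n:=\sup_{x\in\mathcal D_{t_n}^\varepsilon,\,i}a_n(x,i)$ — here it is important that we only need the estimate on the localized domain, since $\bar X$ has been projected — the above gives the recursion $A_n\le C_{n+1}\delta(1+Lh)+A_{n+1}$ with $A_N=0$. Unrolling, $A_0\le\delta(1+Lh)\sum_{n=0}^{N-1}C_{n+1}$. Inserting the estimate $C_{n}\le Ce^{-\rho t_n}e^{L(T-t_n)}$ from \eqref{eq:LipUBound}, the sum $\sum_{n=1}^{N}C_n$ is dominated by $Ce^{LT}\sum_{n}e^{-\rho t_n}h\cdot h^{-1}$; the geometric-type sum $\sum_n e^{-\rho t_n}h$ converges (bounded by a constant independent of $N$), so $\sum_{n=1}^N C_n\le C h^{-1}$, yielding $A_0\le C\delta(1+Lh)h^{-1}\le C\delta/h$. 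For a general starting time $t\in\Pi$ rather than $0$, the same argument started from $t$ instead produces the extra factor $e^{-\rho t}$, giving the claimed $\left|\bar v_\Pi(t,x,i)-\tilde v_\Pi(t,x,i)\right|\le C(\delta/h)e^{-\rho t}$.

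The step I expect to be the main obstacle — or at least the one requiring the most care — is the correct bookkeeping of the $t_n$-dependent Lipschitz constants $C_n$ in the accumulated sum. Naively, one has $N=T/h$ terms each of size $O(1)$, which would give only $A_0\le C\delta/h$ with a constant blowing up in $T$; the point is that the discount built into $f$ and $g$ forces $C_n\lesssim e^{-\rho t_n}e^{L(T-t_n)}$, and one needs $\rho$ large enough (relative to $L$, via $\bar\rho$, cf. Assumption \ref{asm:Lipschitz-f-g}) for $\sum_n e^{-\rho t_n}h$ to stay bounded uniformly in $N$ and $T$ — this is exactly what makes the final constant $C$ independent of all scheme parameters, as required. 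A secondary subtlety is justifying that $\tilde\Phi$ applied to the difference $\bar v_\Pi-\tilde v_\Pi$ is controlled by the sup of that difference over a single hypercube: this follows because $\tilde\lambda_j^{t_n,x}$ is a genuine conditional expectation (an average), hence monotone and non-expansive in the supremum norm, and the truncation $\underline\Gamma\vee\cdot\wedge\overline\Gamma$ only shrinks distances — but it must be stated, since $\hat\Phi$ (unlike $\tilde\Phi$) will not have this clean property, which is why the $\tilde v_\Pi\to\hat v_\Pi$ step is deferred to Proposition \ref{pro:RegMConv}.
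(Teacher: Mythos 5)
Your argument is essentially the paper's own proof: a backward induction in which the one-step error splits into the local regression error $C_{n+1}\delta(1+Lh)$ (Lemma \ref{lem:RegApprox} combined with the Lipschitz constants of Lemma \ref{lem:vLipschitz}) plus the propagated error through the truncated conditional expectation, and the resulting recursion $E_{n}=C_{n+1}\delta(1+Lh)+E_{n+1}$ is then summed using \eqref{eq:LipUBound} to get $C\frac{\delta}{h}e^{-\rho t_{n}}$ (your concern about $T$-uniformity of the constant, tied to $\rho$ versus $L$ through the discounting, is a fair reading of what makes that last sum work). One small correction: the second piece is a conditional expectation of the time-$t_{n+1}$ error evaluated at $\bar X_{t_{n+1}}$, so it is bounded by the supremum of $a_{n+1}(\cdot,j)$ over the range of $\bar X_{t_{n+1}}$ (hence by $A_{n+1}$, the paper's $E_{n+1}$), not by the supremum over the time-$t_n$ hypercube $B_{t_{n}}(x)$ as you wrote — your ensuing recursion $A_{n}\le C_{n+1}\delta(1+Lh)+A_{n+1}$ is nevertheless exactly the right one.
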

\begin{proof}
For each $t_{n}\in\Pi$, we look for an upper bound $E_{n}$, independent
of $x$ and $i$, of the quantity $\left|\bar{v}_{\Pi}\left(t_{n},x,i\right)-\tilde{v}_{\Pi}\left(t_{n},x,i\right)\right|$.
First:
\[
\left|\bar{v}_{\Pi}\left(T,x,i\right)-\tilde{v}_{\Pi}\left(T,x,i\right)\right|=\left|g\left(T,x,i\right)-g\left(T,x,i\right)\right|=0
\]
Hence $E_{N}=0$. Fix now $n\in\left[0,N-1\right]$. Using Remark
\ref{rk:Recurrences}:
\begin{eqnarray*}
\tilde{v}_{\Pi}\left(t_{n},x,i\right) & = & \max_{j\in\mathbb{I}_{q}}\left\{ hf\left(t_{n},x,j\right)-k\left(t_{n},i,j\right)+\tilde{\Phi}_{j}^{t_{n},x}\left(\tilde{v}_{\Pi}\right)\right\} \\
 & = & \max_{j\in\mathbb{I}_{q}}\left\{ hf\left(t_{n},x,j\right)-k\left(t_{n},i,j\right)+\Phi_{j}^{t_{n},x}\left(\bar{v}_{\Pi}\right)\right.\\
 &  & \qquad\;\left.+\tilde{\Phi}_{j}^{t_{n},x}\left(\bar{v}_{\Pi}\right)-\Phi_{j}^{t_{n},x}\left(\bar{v}_{\Pi}\right)\right.\\
 &  & \qquad\;\left.+\tilde{\Phi}_{j}^{t_{n},x}\left(\tilde{v}_{\Pi}\right)-\tilde{\Phi}_{j}^{t_{n},x}\left(\bar{v}_{\Pi}\right)\right\} 
\end{eqnarray*}
Using Lemmas \ref{lem:RegApprox} and \ref{lem:vLipschitz}, $\tilde{\Phi}_{j}^{t_{n},x}\left(\bar{v}_{\Pi}\right)-\Phi_{j}^{t_{n},x}\left(\bar{v}_{\Pi}\right)\leq C_{n+1}\delta\left(1+Lh\right)$
where $C_{n+1}$ is the Lipschitz constant of $\bar{v}_{\Pi}$ at
time $t_{n+1}$ (see Lemma \ref{lem:vLipschitz}). Moreover,
\begin{eqnarray*}
\tilde{\Phi}_{j}^{t_{n},x}\left(\tilde{v}_{\Pi}\right)-\tilde{\Phi}_{j}^{t_{n},x}\left(\bar{v}_{\Pi}\right) & \leq & \mathbb{E}\left[\left.\tilde{v}_{\Pi}\left(t_{n+1},\bar{X}_{t_{n+1}},j\right)-\bar{v}_{\Pi}\left(t_{n+1},\bar{X}_{t_{n+1}},j\right)\right|X_{t_{n}}\in B_{t_{n}}\left(x\right)\right]\\
 & \leq & E_{n+1}\,.
\end{eqnarray*}
Hence:
\[
\tilde{v}_{\Pi}\left(t_{n},x,i\right)\leq\bar{v}_{\Pi}\left(t_{n},x,i\right)+C_{n+1}\delta\left(1+Lh\right)+E_{n+1}
\]
Symmetrically, the same inequality holds for $\bar{v}_{\Pi}\left(T,x,i\right)-\tilde{v}_{\Pi}\left(t_{n},x,i\right)$,
leading to:
\[
\left|\bar{v}_{\Pi}\left(t_{n},x,i\right)-\tilde{v}_{\Pi}\left(t_{n},x,i\right)\right|\leq E_{n}
\]
where:\vspace{-5mm}
\begin{eqnarray*}
E_{N} & = & 0\\
E_{n} & = & C_{n+1}\delta\left(1+Lh\right)+E_{n+1}\,.
\end{eqnarray*}
Consequently, using equation \eqref{eq:LipUBound}:\vspace{-2mm}
\[
E_{n}=\delta\left(1+Lh\right)\sum_{k=n+1}^{N}C_{k}\leq C\frac{\delta}{h}e^{-\rho t_{n}}
\]
where $C>0$ does not depend on $t_{n}$ nor $T$.
\end{proof}

The following lemma measures the regression error. It is an extension
of Lemma 3.8 in \cite{Tan11} (itself inspired by Theorem 5.1 in \cite{Bouchard04}). 
\begin{lem}
\label{lem:RegressionM}Consider a measurable function $\varphi:\Pi\times\mathbb{R}^{d}\times\mathbb{I}_{q}\rightarrow\mathbb{R}$.
For any $p\geq1$, there exists $C_{p}\geq0$ such that $\forall\left(t_{n},l,j\right)\in\Pi\times\left[1,M\right]\times\mathbb{I}_{q}$:
\begin{equation}
\left\Vert \hat{\Phi}_{j}^{t_{n},\bar{X}_{t_{n}}^{l}}\!\left(\varphi\right)-\tilde{\Phi}_{j}^{t_{n},\bar{X}_{t_{n}}^{l}}\!\left(\varphi\right)\right\Vert _{L_{p}}\leq\frac{C_{p}}{\sqrt{M}}\frac{\Gamma^{t_{n}}\left(\varphi\right)+\bar{\varphi}^{t_{n}}}{\mathbb{P}\left(\bar{X}_{t_{n}}\!\in B_{t_{n}}\!\left(\bar{X}_{t_{n}}^{l}\right)\right)^{1-\frac{1}{p\vee2}}}+\frac{C_{p}}{M}\frac{\bar{\varphi}^{t_{n}}}{\mathbb{P}\left(\bar{X}_{t_{n}}\!\in B_{t_{n}}\!\left(\bar{X}_{t_{n}}^{l}\right)\right)}\label{eq:RegressionM}
\end{equation}

where $\bar{\varphi}^{t_{n}}\in\mathbb{R}_{+}$ is such that $\left|\varphi\left(t_{n+1},\bar{X}_{t_{n+1}},j\right)\right|\leq\bar{\varphi}^{t_{n}}$
a.s. .\end{lem}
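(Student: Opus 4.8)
The plan is to compare the two truncated-and-regressed conditional expectations by decomposing $\hat\Phi - \tilde\Phi$ into an averaging error. Since both quantities are obtained by projecting $\hat\lambda$ and $\tilde\lambda$ onto the same interval $[\underline\Gamma^{t_n,x},\overline\Gamma^{t_n,x}]$, and the truncation map $y\mapsto \underline\Gamma\vee y\wedge\overline\Gamma$ is $1$-Lipschitz, it suffices to bound $\|\hat\lambda_j^{t_n,\bar X_{t_n}^l}(\varphi)-\tilde\lambda_j^{t_n,\bar X_{t_n}^l}(\varphi)\|_{L_p}$. Recalling \eqref{eq:lambda(x)tilde} and \eqref{eq:lambda(x)hat}, with $B:=B_{t_n}(\bar X_{t_n}^l)$ and $p_B:=\mathbb{P}(\bar X_{t_n}\in B)$, this is exactly the deviation of an empirical conditional average $\frac{1}{M_{t_n}^l}\sum_{m\in\mathcal M_{t_n}^l}\varphi(t_{n+1},\bar X_{t_{n+1}}^m,j)$ from its population counterpart $\tilde\lambda=\mathbb{E}[\varphi(t_{n+1},\bar X_{t_{n+1}},j)\mid \bar X_{t_n}\in B]$.

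\textbf{Key steps.}

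First I would condition on the count $M_{t_n}^l=\#\mathcal M_{t_n}^l$, which is a $\mathrm{Binomial}(M,p_B)$ random variable (or $M-1$ trials plus the guaranteed path $l$ itself, a harmless adjustment). On the event $\{M_{t_n}^l = N\}$ with $N\geq 1$, the sum $\sum_{m\in\mathcal M_{t_n}^l}\varphi(t_{n+1},\bar X_{t_{n+1}}^m,j)$ is, conditionally, a sum of $N$ i.i.d.\ copies of a variable with mean $\tilde\lambda$ and absolute value bounded by $\bar\varphi^{t_n}$; hence $\|\hat\lambda-\tilde\lambda\|_{L_p}$ conditionally on $\{M_{t_n}^l=N\}$ is controlled by the Marcinkiewicz--Zygmund / Rosenthal inequality, giving a bound of order $(\Gamma^{t_n}(\varphi)+\bar\varphi^{t_n})N^{-1/2}$ for $p\geq 2$ and $(\ldots)N^{-1/(p\vee 2)}$ for $1\le p<2$ (here one also uses that $\tilde\lambda$ is itself bounded by $\Gamma^{t_n}(\varphi)$, so we may center against the truncated value at negligible cost — this is where the $\Gamma^{t_n}(\varphi)$ term enters). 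The second, crucial step is to take expectation over $M_{t_n}^l$: one needs $\mathbb{E}[(M_{t_n}^l)^{-q/2}\mathbf 1_{\{M_{t_n}^l\geq 1\}}]\leq C_q (Mp_B)^{-q/2}$ for $q>0$, which follows from standard binomial negative-moment estimates (e.g.\ Chernoff-type control of the lower tail $\{M_{t_n}^l\leq Mp_B/2\}$, on which one uses the crude bound $M_{t_n}^l\geq 1$, paying the extra $\frac{\bar\varphi^{t_n}}{Mp_B}$ term that appears as the second summand in \eqref{eq:RegressionM}). Assembling these and using $\|\cdot\|_{L_p}$-triangle inequality between the $N\ge 1$ contribution and the (rare) $N=0$ contribution yields \eqref{eq:RegressionM}.

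\textbf{Main obstacle.}

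The delicate point is the handling of small or zero bin-counts: the naive conditional bound blows up like $(M_{t_n}^l)^{-1/2}$, so one must show that the probability of an underpopulated bin is small enough — quantitatively, a Bernstein/Chernoff bound on $\mathbb{P}(M_{t_n}^l\leq \tfrac12 Mp_B)$ decaying geometrically in $Mp_B$ — and then combine this with the deterministic a.s.\ bound $|\hat\lambda_j^{t_n,x}(\varphi)|\le \bar\varphi^{t_n}$ (valid since $\hat\lambda$ is an average of values bounded by $\bar\varphi^{t_n}$) on that rare event. Getting the exponent $1-\frac{1}{p\vee 2}$ correct for all $p\geq 1$ simultaneously, rather than just for $p=2$, requires being careful to invoke the right form of the moment inequality in the two regimes $p\ge 2$ and $p<2$; apart from this, the argument is a routine refinement of Lemma 3.8 in \cite{Tan11} and Theorem 5.1 in \cite{Bouchard04}.
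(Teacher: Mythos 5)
Your proposal is correct, but it takes a genuinely different route from the paper's proof. The paper never conditions on the bin occupancy: it writes $\hat{\lambda}_j^{t_n,\bar{X}_{t_n}^l}(\varphi)$ as a ratio of empirical averages, splits according to whether the empirical cell frequency deviates from $\mathbb{P}\left(\bar{X}_{t_n}\in B_{t_n}\left(\bar{X}_{t_n}^l\right)\right)$ by more than half of it, uses on the bad event the truncation bound $2\Gamma^{t_n}(\varphi)$ together with Markov's inequality, and on the good event reduces everything to the two centered sums $\varepsilon_j^{t_n,\bar{X}_{t_n}^l}(\varphi)$ and $\varepsilon^{t_n,\bar{X}_{t_n}^l}(1)$, estimated via Marcinkiewicz--Zygmund (Lemma \ref{lem:MZ}); there the $\bar{\varphi}^{t_n}/\left(M\,\mathbb{P}(\cdot)\right)$ term arises from treating the $m=l$ summand separately, and the exponent $1-\frac{1}{p\vee2}$ from the $L_{p\vee2}$ norm of the centered indicator. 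You instead exploit that the truncation map is $1$-Lipschitz to reduce to $\hat{\lambda}-\tilde{\lambda}$, then condition on the cell $B_{t_n}\left(\bar{X}_{t_n}^l\right)$ and on the set of indices falling into it; given these, the selected paths (including path $l$ itself, since $\left\{B_{t_n}\left(\bar{X}_{t_n}^l\right)=B^{k}\right\}=\left\{\bar{X}_{t_n}^{l}\in B^{k}\right\}$, so no separate treatment of $m=l$ is needed) are i.i.d.\ with the law of $\bar{X}$ conditioned on the cell, and conditional Marcinkiewicz--Zygmund plus binomial negative-moment/Chernoff estimates deliver the claim --- in fact a slightly sharper first term, of order $\left(M\,\mathbb{P}(\cdot)\right)^{-1/2}$ rather than $M^{-1/2}\mathbb{P}(\cdot)^{-\left(1-\frac{1}{p\vee2}\right)}$, which implies \eqref{eq:RegressionM} since $\mathbb{P}(\cdot)\leq1$. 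Two minor caveats: your conditional-i.i.d.\ step (like the paper's application of Lemma \ref{lem:MZ}) tacitly requires the partition to be independent of the simulated sample, i.e.\ it covers the deterministic-partition case, which is the setting the paper's proof effectively treats as well; and your parenthetical claim that $\tilde{\lambda}$ is bounded by $\Gamma^{t_n}(\varphi)$ is inaccurate ($\tilde{\lambda}$ is only bounded by $\bar{\varphi}^{t_n}$, while it is $\tilde{\Phi}$ that is bounded by $\Gamma^{t_n}(\varphi)$), but this is immaterial since the stated bound allows $\Gamma^{t_n}(\varphi)+\bar{\varphi}^{t_n}$. Note also that $M_{t_n}^{x}\geq1$ almost surely because path $l$ lies in its own cell, so the $N=0$ case you set aside never actually occurs.
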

\begin{proof}
Define the following centered random variables:
\begin{align*}
\varepsilon_{j}^{t_{n},\bar{X}_{t_{n}}^{l}}\left(\varphi\right) & :=\frac{1}{M}\sum_{m=1}^{M}\varphi\!\left(t_{n+1},\bar{X}_{t_{n+1}}^{m},j\right)\!\mathbf{1}\!\left\{ \bar{X}_{t_{n}}^{m}\in B_{t_{n}}\!\!\left(\bar{X}_{t_{n}}^{l}\right)\right\} -\mathbb{E}\!\left[\varphi\!\left(t_{n+1},\bar{X}_{t_{n+1}}^{m},j\right)\!\mathbf{1}\!\left\{ \bar{X}_{t_{n}}^{m}\in B_{t_{n}}\!\!\left(\bar{X}_{t_{n}}^{l}\right)\right\} \right]\\
\varepsilon^{t_{n},\bar{X}_{t_{n}}^{l}}\left(1\right) & :=\frac{1}{M}\sum_{m=1}^{M}\mathbf{1}\left\{ \bar{X}_{t_{n}}^{m}\in B_{t_{n}}\left(\bar{X}_{t_{n}}^{l}\right)\right\} -\mathbb{P}\left(\bar{X}_{t_{n}}^{m}\in B_{t_{n}}\left(\bar{X}_{t_{n}}^{l}\right)\right)
\end{align*}
Then:
\begin{align*}
 & \left|\hat{\Phi}_{j}^{t_{n},\bar{X}_{t_{n}}^{l}}\left(\varphi\right)-\tilde{\Phi}_{j}^{t_{n},\bar{X}_{t_{n}}^{l}}\left(\varphi\right)\right|=\left|\hat{\Phi}_{j}^{t_{n},\bar{X}_{t_{n}}^{l}}\left(\varphi\right)-\tilde{\Phi}_{j}^{t_{n},\bar{X}_{t_{n}}^{l}}\left(\varphi\right)\right|\wedge2\Gamma^{t_{n}}\left(\varphi\right)\\
 & \leq\left|\hat{\Phi}_{j}^{t_{n},\bar{X}_{t_{n}}^{l}}\left(\varphi\right)-\tilde{\Phi}_{j}^{t_{n},\bar{X}_{t_{n}}^{l}}\left(\varphi\right)\right|\mathbf{1}\left\{ \frac{\left|\varepsilon^{t_{n},\bar{X}_{t_{n}}^{l}}\left(1\right)\right|}{\mathbb{P}\left(\bar{X}_{t_{n}}\in B_{t_{n}}\left(\bar{X}_{t_{n}}^{l}\right)\right)}\leq\frac{1}{2}\right\} +2\Gamma^{t_{n}}\left(\varphi\right)\mathbf{1}\left\{ \frac{\left|\varepsilon^{t_{n},\bar{X}_{t_{n}}^{l}}\left(1\right)\right|}{\mathbb{P}\left(\bar{X}_{t_{n}}\in B_{t_{n}}\left(\bar{X}_{t_{n}}^{l}\right)\right)}\!>\!\frac{1}{2}\right\} 
\end{align*}
and:
\begin{align*}
 & \left|\hat{\Phi}_{j}^{t_{n},\bar{X}_{t_{n}}^{l}}\left(\varphi\right)-\tilde{\Phi}_{j}^{t_{n},\bar{X}_{t_{n}}^{l}}\left(\varphi\right)\right|\mathbf{1}\left\{ \frac{\left|\varepsilon^{t_{n},\bar{X}_{t_{n}}^{l}}\left(1\right)\right|}{\mathbb{P}\left(\bar{X}_{t_{n}}\in B_{t_{n}}\left(\bar{X}_{t_{n}}^{l}\right)\right)}\leq\frac{1}{2}\right\} \hspace{80mm}
\end{align*}
\begin{align*}
 & =\left|\hat{\Phi}_{j}^{t_{n},\bar{X}_{t_{n}}^{l}}\left(\varphi\right)-\tilde{\Phi}_{j}^{t_{n},\bar{X}_{t_{n}}^{l}}\left(\varphi\right)\frac{\mathbb{P}\left(\bar{X}_{t_{n}}\in B_{t_{n}}\left(\bar{X}_{t_{n}}^{l}\right)\right)}{\frac{1}{M}\sum_{m=1}^{M}\mathbf{1}\left\{ \bar{X}_{t_{n}}^{m}\in B_{t_{n}}\left(\bar{X}_{t_{n}}^{l}\right)\right\} }-\right.\\
 & \left.\tilde{\Phi}_{j}^{t_{n},\bar{X}_{t_{n}}^{l}}\left(\varphi\right)\frac{\varepsilon^{t_{n},\bar{X}_{t_{n}}^{l}}\left(1\right)}{\frac{1}{M}\sum_{m=1}^{M}\mathbf{1}\left\{ \bar{X}_{t_{n}}^{m}\in B_{t_{n}}\left(\bar{X}_{t_{n}}^{l}\right)\right\} }\right|\mathbf{1}\left\{ \frac{\left|\varepsilon^{t_{n},\bar{X}_{t_{n}}^{l}}\left(1\right)\right|}{\mathbb{P}\left(\bar{X}_{t_{n}}\in B_{t_{n}}\left(\bar{X}_{t_{n}}^{l}\right)\right)}\leq\frac{1}{2}\right\} \\
 & \leq\left\{ \frac{\left|\varepsilon_{j}^{t_{n},\bar{X}_{t_{n}}^{l}}\left(\varphi\right)\right|}{\frac{1}{M}\sum_{m=1}^{M}\mathbf{1}\left\{ \bar{X}_{t_{n}}^{m}\in B_{t_{n}}\left(\bar{X}_{t_{n}}^{l}\right)\right\} }\wedge3\Gamma^{t_{n}}\left(\varphi\right)+\right.\\
 & \left.\left|\tilde{\Phi}_{j}^{t_{n},\bar{X}_{t_{n}}^{l}}\left(\varphi\right)\right|\frac{\left|\varepsilon^{t_{n},\bar{X}_{t_{n}}^{l}}\left(1\right)\right|}{\frac{1}{M}\sum_{m=1}^{M}\mathbf{1}\left\{ \bar{X}_{t_{n}}^{m}\in B_{t_{n}}\left(\bar{X}_{t_{n}}^{l}\right)\right\} }\right\} \mathbf{1}\left\{ \frac{\left|\varepsilon^{t_{n},\bar{X}_{t_{n}}^{l}}\left(1\right)\right|}{\mathbb{P}\left(\bar{X}_{t_{n}}\in B_{t_{n}}\left(\bar{X}_{t_{n}}^{l}\right)\right)}\leq\frac{1}{2}\right\} \\
 & \leq\frac{2}{\mathbb{P}\left(\bar{X}_{t_{n}}\in B_{t_{n}}\left(\bar{X}_{t_{n}}^{l}\right)\right)}\left\{ \left|\varepsilon_{j}^{t_{n},\bar{X}_{t_{n}}^{l}}\left(\varphi\right)\right|\wedge5\Gamma^{t_{n}}\left(\varphi\right)+\left|\varepsilon^{t_{n},\bar{X}_{t_{n}}^{l}}\left(1\right)\right|\Gamma^{t_{n}}\left(\varphi\right)\right\} \mathbf{1}\left\{ \frac{\left|\varepsilon^{t_{n},\bar{X}_{t_{n}}^{l}}\left(1\right)\right|}{\mathbb{P}\left(\bar{X}_{t_{n}}\in B_{t_{n}}\left(\bar{X}_{t_{n}}^{l}\right)\right)}\leq\frac{1}{2}\right\} 
\end{align*}
Now, for any $p\geq1$:
\begin{align*}
 & \left|\hat{\Phi}_{j}^{t_{n},\bar{X}_{t_{n}}^{l}}\left(\varphi\right)-\tilde{\Phi}_{j}^{t_{n},\bar{X}_{t_{n}}^{l}}\left(\varphi\right)\right|^{p}\\
 & \leq\frac{2^{3p-2}}{\mathbb{P}\left(\bar{X}_{t_{n}}\in B_{t_{n}}\left(\bar{X}_{t_{n}}^{l}\right)\right)^{p}}\left\{ \left\{ \left|\varepsilon_{j}^{t_{n},\bar{X}_{t_{n}}^{l}}\left(\varphi\right)\right|\wedge5\Gamma^{t_{n}}\left(\varphi\right)\right\} ^{p}+\left\{ \left|\varepsilon^{t_{n},\bar{X}_{t_{n}}^{l}}\left(1\right)\right|\Gamma^{t_{n}}\left(\varphi\right)\right\} ^{p}\right\} \times\\
 & \mathbf{1}\left\{ \frac{\left|\varepsilon^{t_{n},\bar{X}_{t_{n}}^{l}}\left(1\right)\right|}{\mathbb{P}\left(\bar{X}_{t_{n}}\in B_{t_{n}}\left(\bar{X}_{t_{n}}^{l}\right)\right)}\leq\frac{1}{2}\right\} +2^{2p-1}\left(\Gamma^{t_{n}}\left(\varphi\right)\right)^{p}\mathbf{1}\left\{ \frac{\left|\varepsilon^{t_{n},\bar{X}_{t_{n}}^{l}}\left(1\right)\right|}{\mathbb{P}\left(\bar{X}_{t_{n}}\in B_{t_{n}}\left(\bar{X}_{t_{n}}^{l}\right)\right)}>\frac{1}{2}\right\} 
\end{align*}
and:
\begin{align}
 & \mathbb{E}\left[\left|\hat{\Phi}_{j}^{t_{n},\bar{X}_{t_{n}}^{l}}\left(\varphi\right)-\tilde{\Phi}_{j}^{t_{n},\bar{X}_{t_{n}}^{l}}\left(\varphi\right)\right|^{p}\right]\nonumber \\
 & \leq\frac{2^{3p-2}}{\mathbb{P}\left(\bar{X}_{t_{n}}\in B_{t_{n}}\left(\bar{X}_{t_{n}}^{l}\right)\right)^{p}}\left\{ \mathbb{E}\left[\left\{ \left|\varepsilon_{j}^{t_{n},\bar{X}_{t_{n}}^{l}}\left(\varphi\right)\right|\wedge5\Gamma^{t_{n}}\left(\varphi\right)\right\} ^{p}\right]+\left(\Gamma^{t_{n}}\left(\varphi\right)\right)^{p}\mathbb{E}\left[\left|\varepsilon^{t_{n},\bar{X}_{t_{n}}^{l}}\left(1\right)\right|^{p}\right]\right\} \nonumber \\
 & +2^{2p-1}\left(\Gamma^{t_{n}}\left(\varphi\right)\right)^{p}\mathbb{P}\left(\left|\varepsilon^{t_{n},\bar{X}_{t_{n}}^{l}}\left(1\right)\right|^{p}>\frac{\mathbb{P}\left(\bar{X}_{t_{n}}\in B_{t_{n}}\left(\bar{X}_{t_{n}}^{l}\right)\right)^{p}}{2^{p}}\right)\nonumber \\
 & \leq\frac{8^{p}}{\mathbb{P}\left(\bar{X}_{t_{n}}\!\in B_{t_{n}}\!\left(\bar{X}_{t_{n}}^{l}\right)\right)^{p}}\left\{ \mathbb{E}\left[\left\{ \left|\varepsilon_{j}^{t_{n},\bar{X}_{t_{n}}^{l}}\!\left(\varphi\right)\right|\wedge5\Gamma^{t_{n}}\left(\varphi\right)\right\} ^{p}\right]+\left\{ \Gamma^{t_{n}}\left(\varphi\right)\right\} ^{p}\mathbb{E}\left[\left|\varepsilon_{j}^{t_{n},\bar{X}_{t_{n}}^{l}}\!\left(1\right)\right|^{p}\right]\right\} \label{eq:regProofIneq1}
\end{align}
using Markov's inequality. Now, the following lemma will provide upper
bounds for $\mathbb{E}\left[\left|\varepsilon^{t_{n},\bar{X}_{t_{n}}^{l}}\left(1\right)\right|^{p}\right]$
and $\mathbb{E}\left[\left|\varepsilon_{j}^{t_{n},\bar{X}_{t_{n}}^{l}}\left(\varphi\right)\right|^{p}\right]$.
\begin{lem}
\label{lem:MZ}For every $p\geq1$, there exists $C_{p}>0$ such that
for any i.i.d. sample $X_{1},\ldots,X_{M}$ of $\mathbb{R}$-valued
random variables such that $\mathbb{E}\left[X_{1}\right]=0$ and $\mathbb{E}\left[\left|X_{1}\right|^{p\vee2}\right]<\infty$,
the following holds:
\begin{equation}
\left\Vert \frac{1}{M}\sum_{m=1}^{M}X_{m}\right\Vert _{L_{p}}\leq\frac{C_{p}}{\sqrt{M}}\left\Vert X_{1}\right\Vert _{L_{p\vee2}}\label{eq:MZ}
\end{equation}
\end{lem}
\begin{proof}
Using Marcinkiewicz-Zygmund's inequality, there exists $C_{p}>0$
such that:
\[
\mathbb{E}\left[\left|\sum_{m=1}^{M}X_{m}\right|^{p}\right]\leq C_{p}\mathbb{E}\left[\left(\sum_{m=1}^{M}\left|X_{m}\right|^{2}\right)^{\frac{p}{2}}\right]
\]
Multiplying both sides by $\frac{1}{M^{p}}$:
\begin{equation}
\mathbb{E}\left[\left|\frac{1}{M}\sum_{m=1}^{M}X_{m}\right|^{p}\right]\leq\frac{C_{p}}{M^{\frac{p}{2}}}\mathbb{E}\left[\left(\frac{1}{M}\sum_{m=1}^{M}\left|X_{m}\right|^{2}\right)^{\frac{p}{2}}\right]\label{eq:MZ1}
\end{equation}
If $p\geq2$, then $\frac{p}{2}\geq1$ and, using Jensen's inequality:
\[
\left(\frac{1}{M}\sum_{m=1}^{M}\left|X_{m}\right|^{2}\right)^{\frac{p}{2}}\leq\frac{1}{M}\sum_{m=1}^{M}\left(\left|X_{m}\right|^{2}\right)^{\frac{p}{2}}=\frac{1}{M}\sum_{m=1}^{M}\left|X_{m}\right|^{p}
\]
Taking expectations on both sides:\vspace{-2mm}
\begin{equation}
\mathbb{E}\left[\left(\frac{1}{M}\sum_{m=1}^{M}\left|X_{m}\right|^{2}\right)^{\frac{p}{2}}\right]\leq\mathbb{E}\left[\left|X_{1}\right|^{p}\right]\label{eq:MZ2}
\end{equation}
Now, if $p<2$, then $\frac{p}{2}<1$ and, using Jensen's inequality:
\begin{equation}
\mathbb{E}\left[\left(\frac{1}{M}\sum_{m=1}^{M}\left|X_{m}\right|^{2}\right)^{\frac{p}{2}}\right]\leq\mathbb{E}\left[\left(\frac{1}{M}\sum_{m=1}^{M}\left|X_{m}\right|^{2}\right)\right]^{\frac{p}{2}}=\mathbb{E}\left[\left|X_{1}\right|^{2}\right]^{\frac{p}{2}}\label{eq:MZ3}
\end{equation}
Then combine inequalities \eqref{eq:MZ1}, \eqref{eq:MZ2} and \eqref{eq:MZ3}
and take the power $\frac{1}{p}$ to obtain inequality \eqref{eq:MZ}.
\end{proof}
Now, suppose that $\exists\bar{\varphi}^{t_{n}}\in\mathbb{R}_{+}$
s.t. $\left|\varphi\left(t_{n+1},\bar{X}_{t_{n+1}},j\right)\right|\leq\bar{\varphi}^{t_{n}}$
a.s. . Then, using Lemma \ref{lem:MZ}, $\exists C_{p}>0$ such that:
\begin{eqnarray}
\mathbb{E}\left[\left|\varepsilon^{t_{n},\bar{X}_{t_{n}}^{l}}\left(1\right)\right|^{p}\right] & \leq & \frac{C_{p}}{M^{\frac{p}{2}}}\mathbb{E}\left[\left|\mathbf{1}\left\{ \bar{X}_{t_{n}}\!\!\in B_{t_{n}}\!\!\left(\bar{X}_{t_{n}}^{l}\right)\right\} -\mathbb{P}\left(\bar{X}_{t_{n}}\!\!\in B_{t_{n}}\!\!\left(\bar{X}_{t_{n}}^{l}\right)\right)\right|^{p\vee2}\right]^{\frac{p}{p\vee2}}\label{eq:regProofIneq2}\\
\mathbb{E}\left[\left|\varepsilon_{j}^{t_{n},\bar{X}_{t_{n}}^{l}}\left(\varphi\right)\right|^{p}\right] & \leq & C_{p}\left\{ \frac{\left(\bar{\varphi}^{t_{n}}\right)^{p}}{M^{p}}+\frac{1}{M^{\frac{p}{2}}}\mathbb{E}\left[\left|\varphi\!\left(t_{n\!+\!1},\bar{X}_{t_{n\!+\!1}},j\right)\mathbf{1}\!\left\{ \bar{X}_{t_{n}}\!\!\in\! B_{t_{n}}\!\!\left(\bar{X}_{t_{n}}^{l}\right)\right\} \right.\right.\right.\nonumber \\
 &  & \left.\left.\left.-\mathbb{E}\left[\varphi\!\left(t_{n\!+\!1},\bar{X}_{t_{n\!+\!1}},j\right)\mathbf{1}\!\left\{ \bar{X}_{t_{n}}\!\!\in\! B_{t_{n}}\!\!\left(\bar{X}_{t_{n}}^{l}\right)\right\} \right]\right|^{p\vee2}\right]^{\frac{p}{p\vee2}}\right\} \label{eq:regProofIneq3}
\end{eqnarray}
where, for the second inequality, the term $m=l$ in the sum was treated
separately. Then:
\begin{align}
 & \mathbb{E}\left[\left|\varphi\left(t_{n+1},\bar{X}_{t_{n+1}},j\right)\mathbf{1}\left\{ \bar{X}_{t_{n}}\!\!\in B_{t_{n}}\!\!\left(\bar{X}_{t_{n}}^{l}\right)\right\} -\mathbb{E}\left[\varphi\left(t_{n+1},\bar{X}_{t_{n+1}},j\right)\mathbf{1}\left\{ \bar{X}_{t_{n}}\!\!\in B_{t_{n}}\!\!\left(\bar{X}_{t_{n}}^{l}\right)\right\} \right]\right|^{p\vee2}\right]^{\frac{p}{p\vee2}}\nonumber \\
 & \leq\left(2^{p\vee2-1}\mathbb{E}\left[\left(\bar{\varphi}^{t_{n}}\right)^{p\vee2}\mathbf{1}\left\{ \bar{X}_{t_{n}}\!\!\in B_{t_{n}}\!\!\left(\bar{X}_{t_{n}}^{l}\right)\right\} +\mathbb{E}\left[\left(\bar{\varphi}^{t_{n}}\right)^{p\vee2}\mathbf{1}\left\{ \bar{X}_{t_{n}}\!\!\in B_{t_{n}}\!\!\left(\bar{X}_{t_{n}}^{l}\right)\right\} \right]\right]\right)^{\frac{p}{p\vee2}}\nonumber \\
 & \leq2^{p}\left(\bar{\varphi}^{t_{n}}\right)^{p}\mathbb{P}\left(\bar{X}_{t_{n}}\in B_{t_{n}}\left(\bar{X}_{t_{n}}^{l}\right)\right)^{\frac{p}{p\vee2}}\label{eq:regProofIneq4}
\end{align}
In a similar manner:
\begin{equation}
\mathbb{E}\left[\left|\mathbf{1}\left\{ \bar{X}_{t_{n}}\!\in B_{t_{n}}\!\left(\bar{X}_{t_{n}}^{l}\right)\right\} -\mathbb{P}\left(\bar{X}_{t_{n}}\!\in B_{t_{n}}\!\left(\bar{X}_{t_{n}}^{l}\right)\right)\right|^{p\vee2}\right]^{\frac{p}{p\vee2}}\leq2^{p}\mathbb{P}\left(\bar{X}_{t_{n}}\!\in B_{t_{n}}\!\left(\bar{X}_{t_{n}}^{l}\right)\right)^{\frac{p}{p\vee2}}\label{eq:regProofIneq5}
\end{equation}
Finally, the combination of inequalities \eqref{eq:regProofIneq1},
\eqref{eq:regProofIneq2}, \eqref{eq:regProofIneq3}, \eqref{eq:regProofIneq4}
and \eqref{eq:regProofIneq5} proves equation \eqref{eq:RegressionM}.
\end{proof}
We now apply Lemma \ref{lem:RegressionM} to $\bar{v}_{\Pi}$ in the
following Corollary:
\begin{cor}
\label{cor:RegressionMv}For every $p\geq1$, there exists $C_{p}\geq0$
s.t. $\forall\left(t_{n},l,j\right)\in\Pi\times\left[1,M\right]\times\mathbb{I}_{q}$:\textup{
\[
\left\Vert \hat{\Phi}_{j}^{t_{n},\bar{X}_{t_{n}}^{l}}\left(\tilde{v}_{\Pi}\right)-\tilde{\Phi}_{j}^{t_{n},\bar{X}_{t_{n}}^{l}}\left(\tilde{v}_{\Pi}\right)\right\Vert _{L_{p}}\leq C_{p}e^{-\rho t_{n}}\frac{1+C\left(T,\varepsilon\right)}{\sqrt{M}p\left(T,\delta,\varepsilon\right)^{1-\frac{1}{p\vee2}}}\left(1+\frac{1}{\sqrt{M}p\left(T,\delta,\varepsilon\right)^{\frac{1}{p\vee2}}}\right)
\]
}\end{cor}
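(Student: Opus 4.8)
The plan is to invoke Lemma \ref{lem:RegressionM} with the choice $\varphi=\tilde{v}_{\Pi}$ and then replace the generic quantities appearing in \eqref{eq:RegressionM} by the explicit bounds already collected for $\tilde{v}_{\Pi}$ in Remark \ref{rk:RegrBound}, together with a lower bound on the mass that $\bar{X}_{t_{n}}$ puts on the relevant hypercube. First I would verify that $\tilde{v}_{\Pi}$ fits the hypotheses of Lemma \ref{lem:RegressionM}: it is a measurable function on $\Pi\times\mathbb{R}^{d}\times\mathbb{I}_{q}$ which, by construction \eqref{eq:lambda(x)tilde}, does not depend on the Monte Carlo sample (it is defined through the population regressions $\tilde{\lambda}$, not the empirical ones $\hat{\lambda}$). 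By \eqref{eq:vTildePiBound}, and since after the space localization of Subsection \ref{sub:SpaceLocIntro} the variable $\bar{X}_{t_{n+1}}$ takes values a.s. in the bounded domain $\mathcal{D}_{t_{n+1}}^{\varepsilon}$, we have $\left|\tilde{v}_{\Pi}\left(t_{n+1},\bar{X}_{t_{n+1}},j\right)\right|\leq Ce^{-\rho t_{n}}\left(1+C\left(T,\varepsilon\right)\right)=:\bar{\varphi}^{t_{n}}$ a.s., so $\bar{\varphi}^{t_{n}}$ is an admissible a.s. bound; and by \eqref{eq:regrTildeBound}, $\Gamma^{t_{n}}\left(\tilde{v}_{\Pi}\right)=Ce^{-\rho t_{n}}\left(1+C\left(T,\varepsilon\right)\right)$ is an admissible truncation level. (If the hypercube partition of Subsection \ref{sub:CondExpIntro} is taken $\mathcal{F}_{t}$-measurable rather than deterministic, one simply applies Lemma \ref{lem:RegressionM} conditionally on the partition, which leaves the estimate unchanged.)

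Next I would establish the lower bound $\mathbb{P}\left(\bar{X}_{t_{n}}\in B_{t_{n}}\left(\bar{X}_{t_{n}}^{l}\right)\right)\geq p\left(T,\delta,\varepsilon\right)$. After localization $\bar{X}_{t_{n}}^{l}\in\mathcal{D}_{t_{n}}^{\varepsilon}$ a.s., hence $B_{t_{n}}\left(\bar{X}_{t_{n}}^{l}\right)$ is one of the partition hypercubes $B_{t_{n}}^{k}\subset\mathcal{D}_{t_{n}}^{\varepsilon}$, so by the definition \eqref{eq:probaMin} of $p$ we get $\mathbb{P}\left(\bar{X}_{t_{n}}\in B_{t_{n}}\left(\bar{X}_{t_{n}}^{l}\right)\right)\geq p\left(t_{n},\delta,\varepsilon\right)$; and since $\Pi\cap\left[0,t_{n}\right]\subseteq\Pi\cap\left[0,T\right]$ whenever $t_{n}\leq T$, the minimum defining $p\left(T,\delta,\varepsilon\right)$ ranges over a larger family, so $p\left(T,\delta,\varepsilon\right)\leq p\left(t_{n},\delta,\varepsilon\right)$. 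Because all these quantities lie in $(0,1]$ and the exponents $1-\tfrac{1}{p\vee2}$ and $1$ are nonnegative, the denominators in \eqref{eq:RegressionM} may be replaced by the corresponding powers of $p\left(T,\delta,\varepsilon\right)$ at the cost of inequalities only.

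Finally, substituting $\Gamma^{t_{n}}\left(\tilde{v}_{\Pi}\right)+\bar{\varphi}^{t_{n}}\leq Ce^{-\rho t_{n}}\left(1+C\left(T,\varepsilon\right)\right)$ and $\bar{\varphi}^{t_{n}}\leq Ce^{-\rho t_{n}}\left(1+C\left(T,\varepsilon\right)\right)$ into \eqref{eq:RegressionM} yields
\[
\left\Vert \hat{\Phi}_{j}^{t_{n},\bar{X}_{t_{n}}^{l}}\!\left(\tilde{v}_{\Pi}\right)-\tilde{\Phi}_{j}^{t_{n},\bar{X}_{t_{n}}^{l}}\!\left(\tilde{v}_{\Pi}\right)\right\Vert _{L_{p}}\leq\frac{C_{p}e^{-\rho t_{n}}\left(1+C\left(T,\varepsilon\right)\right)}{\sqrt{M}\,p\left(T,\delta,\varepsilon\right)^{1-\frac{1}{p\vee2}}}+\frac{C_{p}e^{-\rho t_{n}}\left(1+C\left(T,\varepsilon\right)\right)}{M\,p\left(T,\delta,\varepsilon\right)},
\]
and the stated form follows by factoring out the first summand, observing that the second one equals the first multiplied by $\bigl(\sqrt{M}\,p\left(T,\delta,\varepsilon\right)^{1/(p\vee2)}\bigr)^{-1}$, since $p\left(T,\delta,\varepsilon\right)^{1-1/(p\vee2)}\cdot p\left(T,\delta,\varepsilon\right)^{1/(p\vee2)}=p\left(T,\delta,\varepsilon\right)$. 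I do not expect a genuine obstacle: the corollary is essentially a bookkeeping substitution into Lemma \ref{lem:RegressionM}, and the only two points deserving care are the a.s. boundedness of $\tilde{v}_{\Pi}$ along the localized Euler paths and the monotonicity of $t\mapsto p\left(t,\delta,\varepsilon\right)$, both immediate from the constructions in Subsections \ref{sub:SpaceLocIntro} and \ref{sub:CondExpIntro}.
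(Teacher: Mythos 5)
Your proposal is correct and follows essentially the same route as the paper: apply Lemma \ref{lem:RegressionM} with $\varphi=\tilde{v}_{\Pi}$, use the bounds from Remark \ref{rk:RegrBound} for $\Gamma^{t_{n}}\left(\tilde{v}_{\Pi}\right)$ and the a.s. bound $\bar{\varphi}^{t_{n}}$, and lower-bound the hypercube probability by $p\left(T,\delta,\varepsilon\right)$. You merely spell out some routine verifications (sample-independence of $\tilde{v}_{\Pi}$, monotonicity of $p$ in $t$, and the final factoring) that the paper leaves implicit.
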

\begin{proof}
First, recall from equation \eqref{eq:vTildePiBound} and \eqref{eq:regrTildeBound}
that there exists $C>0$ such that for every $\left(t_{n},j\right)\in\Pi\times\mathbb{I}_{q}$:
\begin{eqnarray*}
\Gamma_{j}^{t_{n}}\left(\tilde{v}_{\Pi}\right) & = & Ce^{-\rho t_{n}}\left(1+C\left(T,\varepsilon\right)\right)\\
\left|\tilde{v}_{\Pi}\left(t_{n+1},\bar{X}_{t_{n+1}},j\right)\right| & \leq & Ce^{-\rho t_{n}}\left(1+C\left(T,\varepsilon\right)\right)
\end{eqnarray*}
Hence one can apply Lemma \ref{lem:RegressionM} to $\tilde{v}_{\Pi}$
with these upper bounds. The final step is to recall that the minimum
probability $p\left(T,\delta,\varepsilon\right)$ defined in equation
\eqref{eq:probaMin} is a lower bound on $\mathbb{P}\left(\bar{X}_{t_{n}}\!\in B_{t_{n}}\!\left(\bar{X}_{t_{n}}^{l}\right)\right)$
for any $\left(t_{n},l\right)\in\Pi\times\left[1,M\right]$.
\end{proof}
Using this result, we can now assess the error between $\tilde{v}_{\Pi}$
and $\hat{v}_{\Pi}$.
\begin{prop}
\label{pro:RegMConv}$\forall p\geq1$, $\exists C_{p}>0$ s.t. $\forall\left(t_{n},l\right)\in\Pi\times\left[1,M\right]$
:
\[
\left\Vert \sup_{i\in\mathbb{I}_{q}^{t_{n}}}\left|\tilde{v}_{\Pi}\left(t,\bar{X}_{t_{n}}^{l},i\right)-\hat{v}_{\Pi}\left(t,\bar{X}_{t_{n}}^{l},i\right)\right|\right\Vert _{L_{p}}\leq C_{p}e^{-\rho t_{n}}\frac{1+C\left(T,\varepsilon\right)}{h\sqrt{M}p\left(T,\delta,\varepsilon\right)^{1-\frac{1}{p\vee2}}}\left(1+\frac{1}{\sqrt{M}p\left(T,\delta,\varepsilon\right)^{\frac{1}{p\vee2}}}\right)
\]

where $\mathbb{I}_{q}^{t_{n}}$ is the set of $\mathcal{F}_{t_{n}}$-measurable
random variables taking values in $\mathbb{I}_{q}$.\end{prop}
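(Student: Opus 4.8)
The plan is to mirror, now in $L_p$-norm, the backward induction used for Proposition~\ref{pro:RegConv}. For $n=N,N-1,\dots,0$ set
\[
E_{n}:=\max_{1\le l\le M}\left\Vert \sup_{i\in\mathbb{I}_{q}^{t_{n}}}\left|\tilde{v}_{\Pi}\!\left(t_{n},\bar{X}_{t_{n}}^{l},i\right)-\hat{v}_{\Pi}\!\left(t_{n},\bar{X}_{t_{n}}^{l},i\right)\right|\right\Vert _{L_{p}},
\]
so the claim is that $E_{n}\le C_{p}e^{-\rho t_{n}}\tfrac{1+C(T,\varepsilon)}{h\sqrt{M}\,p(T,\delta,\varepsilon)^{1-1/(p\vee2)}}\bigl(1+\tfrac{1}{\sqrt{M}\,p(T,\delta,\varepsilon)^{1/(p\vee2)}}\bigr)$; since both functions equal $g$ at $t_{N}=T$, we have $E_{N}=0$. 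For the induction step I would start from the dynamic programming recursions recalled in Remark~\ref{rk:Recurrences}. Because the term $-k(t_{n},i,j)$ sits inside both maxima with the \emph{same} $i$, the elementary inequality $\bigl|\max_{j}(a_{j}+c_{j})-\max_{j}(b_{j}+c_{j})\bigr|\le\max_{j}|a_{j}-b_{j}|$ gives, for every $x$ and every (possibly $\mathcal{F}_{t_{n}}$-measurable) $i$,
\[
\left|\tilde{v}_{\Pi}(t_{n},x,i)-\hat{v}_{\Pi}(t_{n},x,i)\right|\le\max_{j\in\mathbb{I}_{q}}\left|\tilde{\Phi}_{j}^{t_{n},x}\!\left(\tilde{v}_{\Pi}\right)-\hat{\Phi}_{j}^{t_{n},x}\!\left(\hat{v}_{\Pi}\right)\right|,
\]
and the right-hand side no longer depends on $i$, which disposes of the supremum over $\mathbb{I}_{q}^{t_{n}}$ in one stroke. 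I then split $\tilde{\Phi}_{j}^{t_{n},x}(\tilde{v}_{\Pi})-\hat{\Phi}_{j}^{t_{n},x}(\hat{v}_{\Pi})=\bigl[\tilde{\Phi}_{j}^{t_{n},x}(\tilde{v}_{\Pi})-\hat{\Phi}_{j}^{t_{n},x}(\tilde{v}_{\Pi})\bigr]+\bigl[\hat{\Phi}_{j}^{t_{n},x}(\tilde{v}_{\Pi})-\hat{\Phi}_{j}^{t_{n},x}(\hat{v}_{\Pi})\bigr]$ and treat the two brackets separately at $x=\bar{X}_{t_{n}}^{l}$.

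The first bracket is, up to sign, exactly the quantity controlled by Corollary~\ref{cor:RegressionMv} (applied to $\varphi=\tilde{v}_{\Pi}$, which by Remark~\ref{rk:RegrBound} satisfies the bound $Ce^{-\rho t_{n}}(1+C(T,\varepsilon))$, using that $p(T,\delta,\varepsilon)\le\mathbb{P}(\bar{X}_{t_{n}}\in B_{t_{n}}(\bar{X}_{t_{n}}^{l}))$): taking the $L_{p}$-norm of the maximum over the finite set $\mathbb{I}_{q}$ only costs a harmless factor, so this contribution is at most
\[
\beta_{n}:=C_{p}\,e^{-\rho t_{n}}\,\frac{1+C(T,\varepsilon)}{\sqrt{M}\,p(T,\delta,\varepsilon)^{1-\frac{1}{p\vee2}}}\left(1+\frac{1}{\sqrt{M}\,p(T,\delta,\varepsilon)^{\frac{1}{p\vee2}}}\right).
\]

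For the second bracket, the truncation map $a\mapsto\underline{\Gamma}^{t_{n}}\vee a\wedge\overline{\Gamma}^{t_{n}}$ is $1$-Lipschitz with the common levels $\pm\Gamma^{t_{n}}$ of Remark~\ref{rk:RegrBound}, while $\hat{\lambda}_{j}^{t_{n},x}$ is the plain empirical average over $\mathcal{M}_{t_{n}}(x)$; hence at $x=\bar{X}_{t_{n}}^{l}$,
\[
\left|\hat{\Phi}_{j}^{t_{n},\bar{X}_{t_{n}}^{l}}\!\left(\tilde{v}_{\Pi}\right)-\hat{\Phi}_{j}^{t_{n},\bar{X}_{t_{n}}^{l}}\!\left(\hat{v}_{\Pi}\right)\right|\le\frac{1}{M_{t_{n}}^{\bar{X}_{t_{n}}^{l}}}\sum_{m\in\mathcal{M}_{t_{n}}^{\bar{X}_{t_{n}}^{l}}}\underbrace{\sup_{i\in\mathbb{I}_{q}^{t_{n+1}}}\left|\tilde{v}_{\Pi}\!\left(t_{n+1},\bar{X}_{t_{n+1}}^{m},i\right)-\hat{v}_{\Pi}\!\left(t_{n+1},\bar{X}_{t_{n+1}}^{m},i\right)\right|}_{=:a_{m}}.
\]
Write $Z$ for the weighted average on the right, with weights $w_{m}=\mathbf{1}\{\bar{X}_{t_{n}}^{m}\in B_{t_{n}}(\bar{X}_{t_{n}}^{l})\}/M_{t_{n}}^{\bar{X}_{t_{n}}^{l}}$ which form a probability vector. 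I expect this to be the delicate step: one would bound $\Vert Z\Vert_{L_{p}}^{p}\le\mathbb{E}[\sum_{m}w_{m}a_{m}^{p}]$ by Jensen, and then exploit the exchangeability of the $M$ i.i.d.\ paths together with the \emph{partition} structure of the hypercubes. Swapping paths $l$ and $m$ leaves $\hat{v}_{\Pi}$ invariant, sends $a_{m}$ to $a_{l}$, and --- precisely because ``being in the same hypercube'' is a symmetric relation and the hypercube count is constant across a hypercube --- leaves $w_{m}$ unchanged; therefore $\mathbb{E}[w_{m}a_{m}^{p}]=\mathbb{E}[w_{m}a_{l}^{p}]$ for every $m$, and summing gives $\Vert Z\Vert_{L_{p}}^{p}\le\mathbb{E}[(\sum_{m}w_{m})\,a_{l}^{p}]=\mathbb{E}[a_{l}^{p}]\le E_{n+1}^{p}$. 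This is the only place where the data-dependent random weights must be kept from inflating the estimate, and the symmetry argument is exactly what prevents it.

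Combining the two brackets gives the recursion $E_{n}\le\beta_{n}+E_{n+1}$, hence $E_{n}\le\sum_{k=n}^{N-1}\beta_{k}$. Since every $\beta_{k}$ carries the factor $e^{-\rho t_{k}}$ (all other factors being independent of $k$) and $\sum_{k=n}^{N-1}e^{-\rho t_{k}}\le e^{-\rho t_{n}}/(1-e^{-\rho h})\le C\,e^{-\rho t_{n}}/h$ for $h$ small, this yields the announced bound with its $1/h$ prefactor. Finally, all the estimates above are uniform in $l\in[1,M]$, so there is no loss in having built them into $E_{n}$, which proves the proposition.
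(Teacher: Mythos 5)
Your proof is correct and follows essentially the same route as the paper's: backward induction on $E_n$, the split into the statistical regression error handled by Corollary \ref{cor:RegressionMv} and the propagation term handled through the empirical-average form \eqref{eq:lambda(x)hat} of $\hat{\lambda}$ with the $1$-Lipschitz truncation, then the recursion $E_n\le\beta_n+E_{n+1}$ summed geometrically to produce the $1/h$ factor. The only difference is that you make explicit, via the exchangeability/symmetry argument on the data-dependent weights $w_m$, why the weighted empirical average is controlled by $E_{n+1}$ --- a step the paper invokes tersely as ``the induction hypothesis'' --- which is a welcome clarification rather than a departure.
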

\begin{proof}
For each $t_{n}\in\Pi$, we look for an upper bound $E_{n}$, independent
of $l$, such that: 
\[
\left\Vert \sup_{i\in\mathbb{I}_{q}^{t_{n}}}\left|\tilde{v}_{\Pi}\left(t,\bar{X}_{t_{n}}^{l},i\right)-\hat{v}_{\Pi}\left(t,\bar{X}_{t_{n}}^{l},i\right)\right|\right\Vert _{L_{p}}\leq E_{n}\,\,.
\]
 First:
\[
\left\Vert \sup_{i\in\mathbb{I}_{q}^{T}}\left|\tilde{v}_{\Pi}\left(T,\bar{X}_{T}^{l},i\right)-\hat{v}_{\Pi}\left(T,\bar{X}_{T}^{l},i\right)\right|\right\Vert _{L_{p}}=\left\Vert \sup_{i\in\mathbb{I}_{q}^{T}}\left|g\left(T,\bar{X}_{T}^{l},i\right)-g\left(T,\bar{X}_{T}^{l},i\right)\right|\right\Vert _{L_{p}}=0
\]
Hence $E_{N}=0$. Fix now $n\in\left[0,N-1\right]$. Recall the dynamic
programming equations from Remark \ref{rk:Recurrences}, and, for
every $\left(i,l\right)\in\mathbb{I}_{q}^{t_{n}}\times\left[1,M\right]$,
introduce $\tilde{j}^{*}$ (resp. $\hat{j}^{*}$) the $\arg\max$
for $\tilde{v}_{\Pi}$ (resp. $\hat{v}_{\Pi}$) at point $\bar{X}_{t_{n}}^{l}$
, i.e.:
\begin{eqnarray*}
\tilde{v}_{\Pi}\left(t_{n},\bar{X}_{t_{n}}^{l},i\right) & = & hf\left(t_{n},\bar{X}_{t_{n}}^{l},\tilde{j}^{*}\right)-k\left(t_{n},i,\tilde{j}^{*}\right)+\tilde{\Phi}_{\tilde{j}^{*}}^{t_{n},\bar{X}_{t_{n}}^{l}}\left(\tilde{v}_{\Pi}\right)\\
\hat{v}_{\Pi}\left(t_{n},\bar{X}_{t_{n}}^{l},i\right) & = & hf\left(t_{n},\bar{X}_{t_{n}}^{l},\hat{j}^{*}\right)-k\left(t_{n},i,\hat{j}^{*}\right)+\hat{\Phi}_{\hat{j}^{*}}^{t_{n},\bar{X}_{t_{n}}^{l}}\left(\hat{v}_{\Pi}\right)
\end{eqnarray*}
Now:
\begin{eqnarray*}
\hat{v}_{\Pi}\left(t_{n},\bar{X}_{t_{n}}^{l},i\right) & = & hf\left(t_{n},\bar{X}_{t_{n}}^{l},\hat{j}^{*}\right)-k\left(t_{n},i,\hat{j}^{*}\right)+\hat{\Phi}_{\hat{j}^{*}}^{t_{n},\bar{X}_{t_{n}}^{l}}\left(\hat{v}_{\Pi}\right)\\
 & = & \left\{ hf\left(t_{n},\bar{X}_{t_{n}}^{l},\hat{j}^{*}\right)-k\left(t_{n},i,\hat{j}^{*}\right)+\tilde{\Phi}_{\hat{j}^{*}}^{t_{n},\bar{X}_{t_{n}}^{l}}\left(\tilde{v}_{\Pi}\right)\right\} \\
 &  & +\left\{ \hat{\Phi}_{\hat{j}^{*}}^{t_{n},\bar{X}_{t_{n}}^{l}}\left(\tilde{v}_{\Pi}\right)-\tilde{\Phi}_{\hat{j}^{*}}^{t_{n},\bar{X}_{t_{n}}^{l}}\left(\tilde{v}_{\Pi}\right)\right\} +\left\{ \hat{\Phi}_{\hat{j}^{*}}^{t_{n},\bar{X}_{t_{n}}^{l}}\left(\hat{v}_{\Pi}\right)-\hat{\Phi}_{\hat{j}^{*}}^{t_{n},\bar{X}_{t_{n}}^{l}}\left(\tilde{v}_{\Pi}\right)\right\} \\
 & \leq & \tilde{v}_{\Pi}\left(t_{n},\bar{X}_{t_{n}}^{l},i\right)+\sum_{j\in\mathbb{I}_{q}}\left|\hat{\Phi}_{j}^{t_{n},\bar{X}_{t_{n}}^{l}}\left(\tilde{v}_{\Pi}\right)-\tilde{\Phi}_{j}^{t_{n},\bar{X}_{t_{n}}^{l}}\left(\tilde{v}_{\Pi}\right)\right|\\
 &  & +\sup_{j\in\mathbb{I}_{q}^{t_{n}}}\left|\hat{\Phi}_{j}^{t_{n},\bar{X}_{t_{n}}^{l}}\left(\hat{v}_{\Pi}\right)-\hat{\Phi}_{j}^{t_{n},\bar{X}_{t_{n}}^{l}}\left(\tilde{v}_{\Pi}\right)\right|
\end{eqnarray*}
Symmetrically:
\begin{eqnarray*}
\tilde{v}_{\Pi}\left(t_{n},\bar{X}_{t_{n}}^{l},i\right) & \leq & \hat{v}_{\Pi}\left(t_{n},\bar{X}_{t_{n}}^{l},i\right)+\sum_{j\in\mathbb{I}_{q}}\left|\tilde{\Phi}_{j}^{t_{n},\bar{X}_{t_{n}}^{l}}\left(\tilde{v}_{\Pi}\right)-\hat{\Phi}_{j}^{t_{n},\bar{X}_{t_{n}}^{l}}\left(\tilde{v}_{\Pi}\right)\right|\\
 &  & +\sup_{j\in\mathbb{I}_{q}^{t_{n}}}\left|\hat{\Phi}_{j}^{t_{n},\bar{X}_{t_{n}}^{l}}\left(\tilde{v}_{\Pi}\right)-\hat{\Phi}_{j}^{t_{n},\bar{X}_{t_{n}}^{l}}\left(\hat{v}_{\Pi}\right)\right|
\end{eqnarray*}
Combining these two inequalities:
\begin{eqnarray*}
\sup_{i\in\mathbb{I}_{q}^{t_{n}}}\left|\tilde{v}_{\Pi}\left(t_{n},\bar{X}_{t_{n}}^{l},i\right)-\hat{v}_{\Pi}\left(t_{n},\bar{X}_{t_{n}}^{l},i\right)\right| & \leq & \sum_{j\in\mathbb{I}_{q}}\left|\hat{\Phi}_{j}^{t_{n},\bar{X}_{t_{n}}^{l}}\left(\tilde{v}_{\Pi}\right)-\tilde{\Phi}_{j}^{t_{n},\bar{X}_{t_{n}}^{l}}\left(\tilde{v}_{\Pi}\right)\right|\\
 &  & +\sup_{j\in\mathbb{I}_{q}^{t_{n}}}\left|\hat{\Phi}_{j}^{t_{n},\bar{X}_{t_{n}}^{l}}\left(\hat{v}_{\Pi}\right)-\hat{\Phi}_{j}^{t_{n},\bar{X}_{t_{n}}^{l}}\left(\tilde{v}_{\Pi}\right)\right|
\end{eqnarray*}
Hence, using the triangular inequality, Corollary \ref{cor:RegressionMv},
equation \eqref{eq:lambda(x)hat}, and the induction hypothesis:
\begin{eqnarray*}
\left\Vert \sup_{i\in\mathbb{I}_{q}^{t_{n}}}\left|\tilde{v}_{\Pi}\left(t_{n},\bar{X}_{t_{n}}^{l},i\right)-\hat{v}_{\Pi}\left(t_{n},\bar{X}_{t_{n}}^{l},i\right)\right|\right\Vert _{L_{p}} & \leq E_{n}:= & C_{p}e^{-\rho t_{n}}\frac{1+C\left(T,\varepsilon\right)}{\sqrt{M}p\left(T,\delta,\varepsilon\right)^{1-\frac{1}{p\vee2}}}\\
 &  & +C_{p}e^{-\rho t_{n}}\frac{1+C\left(T,\varepsilon\right)}{Mp\left(T,\delta,\varepsilon\right)}+E_{n+1}
\end{eqnarray*}
for some constant $C_{p}>0$ which depends only on $p$. Consequently:\vspace{-1mm}
\begin{eqnarray*}
E_{n} & \leq & C_{p}e^{-\rho t_{n}}\frac{1+C\left(T,\varepsilon\right)}{h\sqrt{M}p\left(T,\delta,\varepsilon\right)^{1-\frac{1}{p\vee2}}}\left(1+\frac{1}{\sqrt{M}p\left(T,\delta,\varepsilon\right)^{\frac{1}{p\vee2}}}\right)
\end{eqnarray*}
where $C_{p}>0$ depends only on $p$.
\end{proof}
Finally, the combination of Propositions \ref{pro:ErrorControl-v1-v2}
\ref{pro:Time-Convergence}, \ref{pro:Euler-Convergence}, \ref{pro:RegConv}
and \ref{pro:RegMConv} at time $t=t_{0}$ proves Theorem \ref{Th:CONVERGENCERATE}.

\section{Complexity analysis and memory reduction\label{sec:Complexity}}

\subsection{Complexity\label{sub:Complexity}}

\subsubsection{Computational complexity}

The number of operations required by the algorithm described below
is in $\mathcal{O}\!\left(q^{2}.N.M\right)$, where we recall that
$q$ is the number of possible switches, $N$ is the number of time
steps and $M$ is the number of Monte Carlo trajectories.
\begin{itemize}
\item The $q^{2}$ term stems from the fact that for every $i\in\mathbb{I}_{q}$,
one has to compute a maximum on $j\in\mathbb{I}_{q}$ (see equation
\eqref{eq:DDP-approx-5}). However, this $q^{2}$ can be reduced to
$q$ as soon as the two following conditions are satisfied:\end{itemize}
\begin{enumerate}
\item (Irreversibility) The controlled variable can only be increased (or,
symmetrically, can only be decreased)
\item (Cost Separability) There exists two functions $k_{1}$ and $k_{2}$
such that $\forall\left(t,i,j\right)\in\mathbb{R}_{+}\times\mathbb{I}_{q}\times\mathbb{I}_{q}$,
$k\left(t,i,j\right)=k_{1}\left(t,i\right)+k_{2}\left(t,j\right)$.
For instance, this is true of affine costs.
\end{enumerate}
Indeed, under those two conditions, equation \eqref{eq:DDP-approx-5}
becomes:
\[
\hat{v}_{\Pi}\left(t_{n},x,i\right)+k_{1}\left(t_{n},i\right)=\max_{j\in\mathbb{I}_{q},\, j\geq i}\left\{ hf\left(t_{n},x,j\right)-k_{2}\left(t_{n},j\right)+\hat{\mathbb{E}}\left[\hat{v}_{\Pi}\left(t_{n+1},\bar{X}_{t_{n+1}}^{t_{n},x},j\right)\right]\right\} \,,\,\, n=N-1,\ldots,0
\]

These maxima can be computed in $\mathcal{O}\!\left(q\right)$ instead
of $\mathcal{O}\!\left(q^{2}\right)$ by starting from the biggest
element $i=i_{q}$ down to the smallest element $i=i_{1}$ (in lexicographical
order) and keeping track of the partial maxima.

Note that these two conditions hold for the numerical application
from Section \ref{sec:Application}, providing the improved complexity
$\mathcal{O}\!\left(q.N.M\right)$.
\begin{itemize}
\item The $N$ term comes from the backward time induction.
\item The $M$ term corresponds to the cost of a regression, which, in the
case of a local basis, can be brought down to $\mathcal{O}\left(M\right)$
(cf. \cite{Bouchard11}).
\end{itemize}

\subsubsection{Memory complexity}

The memory size required for solving optimal switching problems (as
well as the simpler American option problems and the more complex
BSDE problems) by Monte Carlo methods is often said to be in $\mathcal{O}\!\left(N.M\right)$,
because, as the Euler scheme is a forward scheme and the dynamic programming
principle is a backward scheme, the storage of the Monte Carlo trajectories
seems inescapable. This fact is the major limitation of such methods,
as acknowledged in \cite{Carmona08} for instance.

Since such a complexity would be unbearable in high dimension, we
describe below a general memory reduction method to obtain a much
more amenable $\mathcal{O}\!\left(N+M\right)$ complexity (or, more
precisely, of $\mathcal{O}\!\left(m.N+q.M\right)$ with $m\ll M$).
This improvement really opens the door to the use of Monte Carlo methods
for American options, optimal switching and BSDEs on high-dimensional
practical applications. Note that this tool can be combined with all
the existing Monte Carlo backward methods which (seem to) require
the storage of all the trajectories.

A drawback of this tool is that it is limited to Markovian processes.
However, one can usually circumvent this restriction by increasing
the dimension of the state variable.

\subsection{General memory reduction method\label{sub:MemoryReduction}}

\subsubsection{Description}

The memory reduction method for Monte Carlo pricing of American options
was pioneered by \cite{Chan04} for the geometric Brownian motion,
and was subsequently extended to multi-dimensional geometric Brownian
motions (\cite{Chan06}) as well as exponential Lévy processes (\cite{Chan11}).
These papers take advantage of the additivity property of the processes
considered. However, as briefly hinted in \cite{Volpe09}, the memory
reduction trick can be extended to more general processes. In particular,
it can be combined with any discretization scheme, for instance the
Euler scheme or Milstein scheme, as long as the value of the stochastic
process at one time step can be expressed via its value at the subsequent
time step.

From a practical point of view, the production of ``random'' sequences
usually involves wisely chosen deterministic sequences, with statistical
properties as close as possible to true randomness (cf. \cite{Kroese11}
for instance for an overview). These sequences can usually be set
using a \textit{seed}, i.e. a (possibly multidimensional) fixed value
aimed at initializing the algorithm which produces the sequence:
\begin{equation}
\left\{ \mathrm{set\, seed\,}s\right\} \begin{array}[b]{c}
\mathrm{rand()}\\
\rightarrow
\end{array}\varepsilon_{1}\begin{array}[b]{c}
\mathrm{rand()}\\
\rightarrow
\end{array}\varepsilon_{2}\begin{array}[b]{c}
\mathrm{rand()}\\
\rightarrow
\end{array}\cdots\begin{array}[b]{c}
\mathrm{rand()}\\
\rightarrow
\end{array}\varepsilon_{n}\label{eq:randomSequence}
\end{equation}
where the $\mathrm{rand()}$ operation consists in going to the next
element of the sequence. Now two useful aspects can be stressed. The
first is that one can usually recover the current seed at any stage
of the sequence. The second is that, if the seed is set later to,
say, once again the seed $s$ from equation \eqref{eq:randomSequence},
then the following elements of the sequence will be once again $\varepsilon_{1}$,
$\varepsilon_{2}$, $\ldots$ In other words, one can recover any
previously produced subsequence of the sequence $\left(\varepsilon_{n}\right)_{n\geq1}$,
provided one stored beforehand the seed at the beginning of the subsequence.
This feature is at the core of the memory reduction method, which
we are going to discuss below in a general setting.

Consider a Markovian stochastic process $\left(X_{t}\right)_{t\geq0}$,
for instance the solution of the stochastic differential equation
\eqref{eq:SDE-X}, recalled below:
\begin{eqnarray*}
X_{0} & = & x_{0}\in\mathbb{R}^{d}\\
dX_{s} & = & b\left(s,X_{s}\right)ds+\sigma\left(s,X_{s}\right)dW_{s}
\end{eqnarray*}
The application of the Euler scheme to this equation can be denoted
as follows:
\begin{eqnarray}
x_{t_{i+1}}^{j} & = & f\left(x_{t_{i}}^{j},\varepsilon_{i}^{j}\right)\label{eq:Euler}\\
f\left(x,\varepsilon\right) & := & x+b\left(t_{i},x\right)h+\sigma\left(t_{i},x\right)\varepsilon\sqrt{h}\label{eq:fiEuler}
\end{eqnarray}
where $\forall i\in\left[0,N-1\right]$ and $\forall j\in\left[1,M\right]$,
$\varepsilon_{i}^{j}\in\mathbb{R}^{d}$ is drawn from a $d$-dimensional
Gaussian random variable. Suppose that for any $\varepsilon\in\mathbb{R}^{d}$,
the function $x\mapsto f\left(x,\varepsilon\right)$ is invertible
(call $f_{\mathrm{inv}}$ its inverse). Then, starting from the final
value $x_{t_{N}}^{j}$ of the sequence \eqref{eq:Euler}, one can
recover the whole trajectory of $X$:
\begin{equation}
x_{t_{i}}^{j}=f_{\mathrm{inv}}\left(x_{t_{i+1}}^{j},\varepsilon_{i}^{j}\right)\label{eq:EulerInverse}
\end{equation}
as long as one can recover the previous draws $\varepsilon_{N-1}^{j}$,
$\ldots$, $\varepsilon_{0}^{j}$. The following pseudo-code describes
an easy way to do it. 

\begin{algorithm}[H]
\begin{minipage}[b][1\totalheight][t]{0.45\columnwidth}%
\begin{lstlisting}[basicstyle={\sffamily},language=Matlab,numbers=left]
% Initialization
for j from 1 to M
	X[j] <- xj
end for

% LOOP 1: Euler scheme
for i from 0 to N-1
   S[i] <- getseed()
   for j from 1 to M
      E <- rand(d)
      X[j] <- f(X[j],E)
   end for
end for
S[N] <- getseed()
\end{lstlisting}
\end{minipage}\hfill{}%
\begin{minipage}[b][1\totalheight][t]{0.45\columnwidth}%
\begin{lstlisting}[basicstyle={\sffamily},language=Matlab,numbers=left]
% LOOP 2: Inverse Euler scheme
for i from N-1 down to 0
   setseed(S[i])
   for j from 1 to M
      E <- rand(d)
      X[j] <- finv(X[j],E)
   end for
end for
setseed(S[N])
\end{lstlisting}
\end{minipage}

\caption{\label{alg:MemoryReduction}Euler Scheme\hspace{40mm} Inverse Euler
Scheme}
\end{algorithm}

The first column of Algorithm \ref{alg:MemoryReduction} corresponds
to the Euler scheme, with the addition of the storage of the seeds.
At the end of the first colum, the vector $\mathsf{X}$ contains the
last values $X_{T}^{j}$, $j=1,\ldots,M$. From this point, one can
recover the previous values $X_{t_{i}}^{j}$, $i=N-1,\ldots,0$, $j=1,\ldots,M$
as done in the second column.

Inside this last loop, one can perform the estimation of the conditional
expectations required by the resolution algorithm of our stochastic
control problem (equation \eqref{eq:DPP-CondExp}). Compared to the
standard storage of the full trajectories $X_{t_{i}}^{j}$, $i=0,\ldots,N$,
$j=1,\ldots,M$, the pros and cons are the following:
\begin{itemize}
\item The number of calls to the $\mathsf{rand\left(\right)}$ function
is doubled.
\item The memory needed is brought down from $\mathcal{O}\left(M\times N\right)$
to $\mathcal{O}\left(M+N\right)$ (storage of the vector space and
the seeds).
\end{itemize}
In other words, at the price of doubling the computation time, one
can bring down the required memory storage by the factor $\min\left(M,N\right)$,
which is a very significant saving. Moreover, the theoretical additional
computation time can be insignificant in practice, as the availability
of much more physical memory makes the resort to the slower virtual
memory much less likely.
\begin{rem}
Even though the storage of the seeds does take $\mathcal{O}\left(N\right)$
in memory size, the constant may be much greater than $1$. For instance,
on $\textrm{Matlab}^{\lyxmathsym{\textregistered}}$, a seed from
the Combined Multiple Recursive algorithm (refer for instance to \cite{Kroese11}
for a description of several random generators) is made of $12$ $\mathsf{uint32}$
($32$-bit unsigned integer), a seed from the Multiplicative Lagged
Fibonacci algorithm is made of $130$ $\mathsf{uint64}$, and a seed
from the popular Mersenne Twister algorithm is made of 625 $\mathsf{uint32}$.
\end{rem}
In order to relieve the storage of the seeds, we now provide a finer
memory reduction algorithm (Algorithm \ref{alg:MemoryReduction2}).
Although Algorithm \ref{alg:MemoryReduction2} requires three main
loops, it enables to perform the last loop without fiddling the seed
of the random generator, and without any vector of seeds locked in
memory, which will thus be fully dedicated to the regressions and
other resolution operations. Moreover, the first two main loops can
be performed beforehand once and for all, storing only the last values
of the vector $\mathsf{X}$ as well as the first seed $\mathsf{S\left[0\right]}$.
Finally, if the random generator is able to leapfrop a given number
of steps, the first loop can be drastically reduced.

\begin{algorithm}[H]
\begin{minipage}[b][1\totalheight][t]{0.45\columnwidth}%
\begin{lstlisting}[basicstyle={\sffamily},language=Matlab,numbers=left]
% LOOP 1: Seeds storage
for i from 0 to N-1
   S[i] <- getseed()
   for j from 1 to M
      E <- rand(d)
   end for
end for

% Initialization
for j from 1 to M
	X[j] <- xj
end for
%
%
%
%
%
\end{lstlisting}
\end{minipage}\hfill{}%
\begin{minipage}[b][1\totalheight][t]{0.45\columnwidth}%
\begin{lstlisting}[basicstyle={\sffamily},language=Matlab,numbers=left]
% LOOP 2: Euler scheme
for i from 0 to N-1
   setseed(S[N-i-1])
   for j from 1 to M
      E <- rand(d)
      X[j] <- f(X[j],E)
   end for
end for
setseed(S[0]) ; free(S)

% LOOP 3: Inverse Euler scheme
for i from N-1 down to 0
   for j from 1 to M
      E <- rand(d)
      X[j] <- finv(X[j],E)
   end for
end for
\end{lstlisting}
\end{minipage}

\caption{\label{alg:MemoryReduction2}General Memory Reduction Method}
\end{algorithm}

\subsubsection{Numerical stability}

Theoretically, the trajectories produced by the Euler scheme \eqref{eq:Euler}
and the inverse Euler scheme \eqref{eq:EulerInverse} are exactly
the same. In practice however, a discrepancy may appear, the cause
of which is discussed below.

On a computer, not all real numbers can be reproduced. Indeed, they
must be stored on a finite number of bits, using a predefined format
(usually the IEEE Standard for Floating-Point Arithmetic (IEEE 754)).
In particular, there exists an incompressible distance $\varepsilon>0$
between two different numbers stored. This causes rounding errors
when performing operations on real numbers.

For instance, consider $x\in\mathbb{R}$ and an invertible function
$f:\mathbb{R}\mapsto\mathbb{R}$. Compute $y=f\left(x\right)$ and
then compute $\hat{x}=f_{\mathrm{inv}}\left(y\right)$. One would
expect that $\hat{x}=x$, but in practice, because of rounding effects,
one may get $\hat{x}=x+\epsilon z$ for a small $\epsilon>0$, where
$z$ is a discrete variable, which can be deemed random, taking values
around zero. This phenomenon is illustrated on Figure \ref{fig:Rounding},
which displays a histogram of $\hat{x}-x$ for $n=10^{7}$ different
values of $x\in\left[0,1\right]$ and for the simple linear function
$f\left(x\right)=2x+3$.

\begin{figure}[H]
\noindent \begin{centering}
\includegraphics[width=0.5\paperwidth]{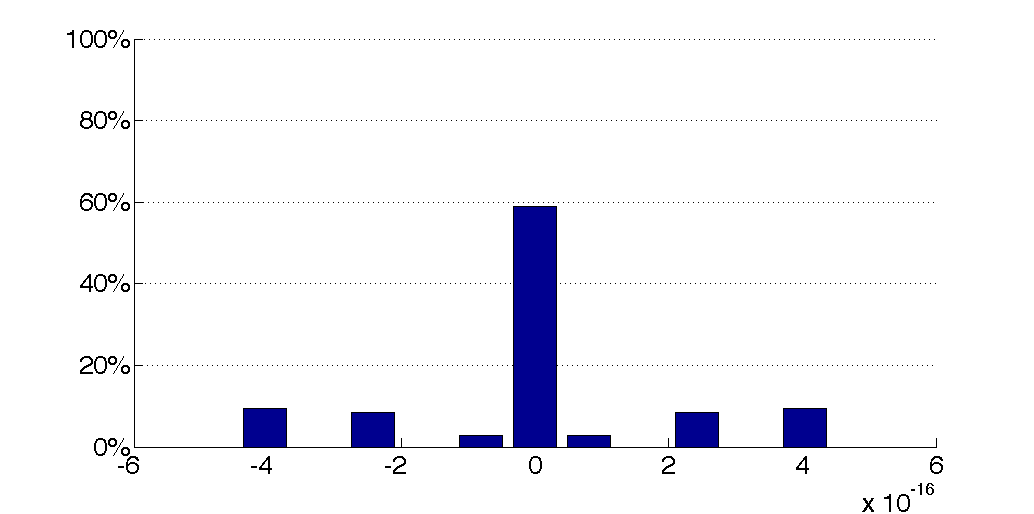}
\par\end{centering}

\caption{\label{fig:Rounding}Histogram of rounding errors}
\end{figure}

We now describe how this affects our memory reduction method. Recall
equation \ref{eq:Euler}:
\[
x_{t_{i+1}}^{j}=f\left(x_{t_{i}}^{j},\varepsilon_{i}^{j}\right)
\]

Now, instead of equation \eqref{eq:EulerInverse}, the inverse Euler
scheme will provide something like:
\begin{eqnarray}
y_{t_{N}}^{j} & = & x_{t_{N}}^{j}\nonumber \\
y_{t_{i}}^{j} & = & f_{\mathrm{inv}}\left(y_{t_{i+1}}^{j},\varepsilon_{i}^{j}\right)+\epsilon z_{i}^{j}\label{eq:EulerInverseError}
\end{eqnarray}
for a small $\epsilon>0$, where $z_{i}^{j}$, $i=0,\ldots,N$, $j=1,\ldots,M$,
can be deemed realizations of a discrete random variable $Z$, independent
of $W$. The distribution of $Z$ is unknown, but data suggests it
may be innocuously assumed centered, symmetric, and with finite moments.

We are now interested in studying the compound rounding error $y_{t_{i}}-x_{t_{i}}$
as a function of $\epsilon$. Of course, its behaviour depends on
the choice of $f$ (equation \eqref{eq:fiEuler}). Below, we explicit
this error on two simple examples: an arithmetic Brownian motion and
an Ornstein-Uhlenbeck process. These two examples illustrate how the
compound rounding error can vary dramatically w.r.t. $f$.

\paragraph{First example: arithmetic Brownian motion}

Consider first the case of an arithmetic Brownian motion with drift
parameter $\mu$ and volatility parameter $\sigma$. Here $f$ and
its inverse are given by:
\begin{eqnarray*}
f\left(x,\varepsilon\right) & = & x+\mu h+\sigma\sqrt{h}\varepsilon\\
f_{\mathrm{inv}}\left(x,\varepsilon\right) & = & x-\mu h-\sigma\sqrt{h}\varepsilon
\end{eqnarray*}
Hence, using equation \eqref{eq:EulerInverseError}, for every $j=1,\ldots,M$:
\[
y_{t_{i}}^{j}-x_{t_{i}}^{j}=\epsilon\sum_{k=i}^{N-1}z_{k}^{j}
\]
In other words, the compound rounding error behaves as a random walk,
multiplied by the small parameter $\epsilon$. Hence, as long as $\epsilon\ll h$
(which is always the case as real numbers smaller than $\epsilon$
cannot be handled properly on a computer), this numerical error is
harmless.

Remark that a similar numerical error arises from the algorithms proposed
in \cite{Chan04} , \cite{Chan06} and \cite{Chan11}, but, fortunately,
as discussed above, this error is utterly negligible.

\paragraph{Second example: Ornstein-Uhlenbeck process}

Now, consider the case of an Ornstein-Uhlenbeck process with mean
reversion $\alpha>0$, long-term mean $\mu$ and volatility $\sigma$.
Here:
\begin{eqnarray*}
f\left(x,\varepsilon\right) & = & x+\alpha\left(\mu-x\right)h+\sigma\sqrt{h}\varepsilon\\
f_{\mathrm{inv}}\left(x,\varepsilon\right) & = & \frac{1}{1-\alpha h}\left(x-\alpha\mu h-\sigma\sqrt{h}\varepsilon\right)
\end{eqnarray*}
Using equation \eqref{eq:EulerInverseError}, for every $j=1,\ldots,M$
the compound error is given by:
\[
y_{t_{i}}^{j}-x_{t_{i}}^{j}=\epsilon\sum_{k=i}^{N-1}\frac{1}{\left(1-\alpha h\right)^{k-i}}z_{k}^{j}
\]
As $\left(1-\alpha h\right)^{-N}\sim\exp\left(\alpha T\right)$ when
$h\rightarrow0$, one can see that, as soon as $T>-\frac{\ln\left(\epsilon\right)}{\alpha}$,
this error may become overwhelming. This phenomenon is illustrated
on Figure \ref{fig:withoutSaves} on a sample of $100$ trajectories.

In order to mitigate this effect, we propose to modify the Algorithm
\ref{alg:MemoryReduction2} as follows: in its second loop (usual
Euler scheme), instead of saving only the last values $x_{T}^{j}$,
one may define a small subset $\tilde{\Pi}\subset\Pi$ and save the
intermediate values $x_{t_{i}}^{j}$, $t_{i}\in\tilde{\Pi}$. Then,
in the last loop (inverse Euler scheme), every time that $t_{i}\in\tilde{\Pi}$,
the current value of the set $x_{t_{i}}^{j}$ may be recovered from
this previous storage.

Figure \ref{fig:withSaves} illustrates the new behaviour of the compound
rounding error with this mended algorithm, on an example with $T=10$
years and $4$ intermediate saves (in addition to the final values).

The drawback of this modification, of course, is that it multiplies
the required storage space by the factor $\#\tilde{\Pi}$. However,
this remains much smaller than the $\mathcal{O}\left(M\times N\right)$
required by the naive full storage algorithm.

\begin{figure}[H]
\hspace{-5.1mm}\subfloat[\label{fig:withoutSaves}Without intermediate saves]{\includegraphics[width=0.41\paperwidth]{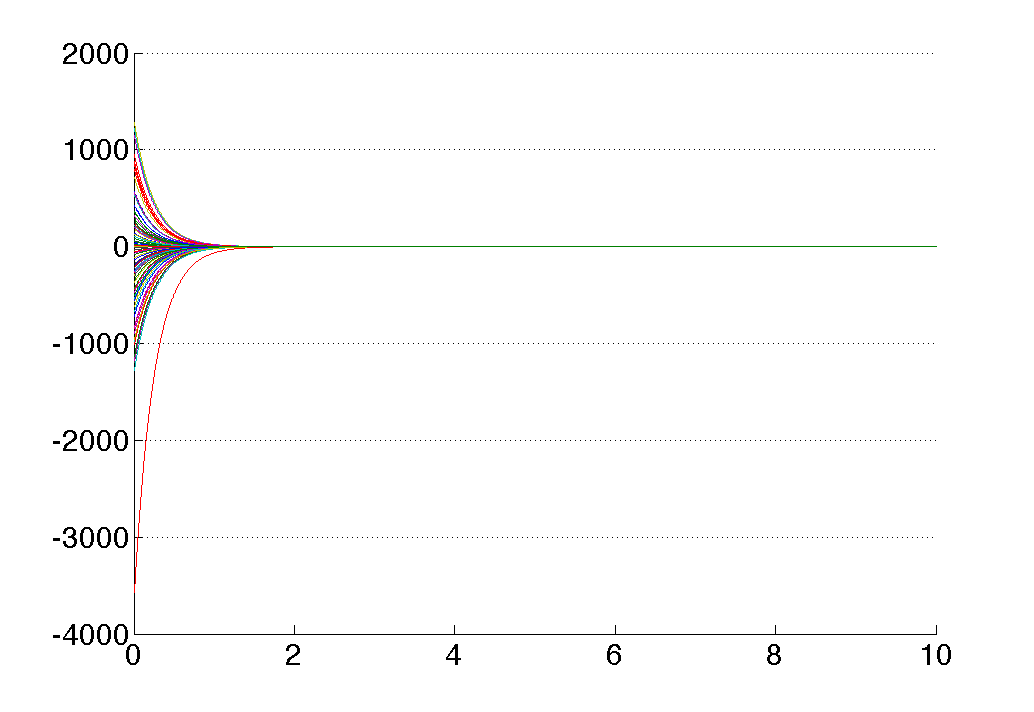}

}\hspace{-7.9mm}\subfloat[\label{fig:withSaves}With intermediate saves]{\includegraphics[width=0.41\paperwidth]{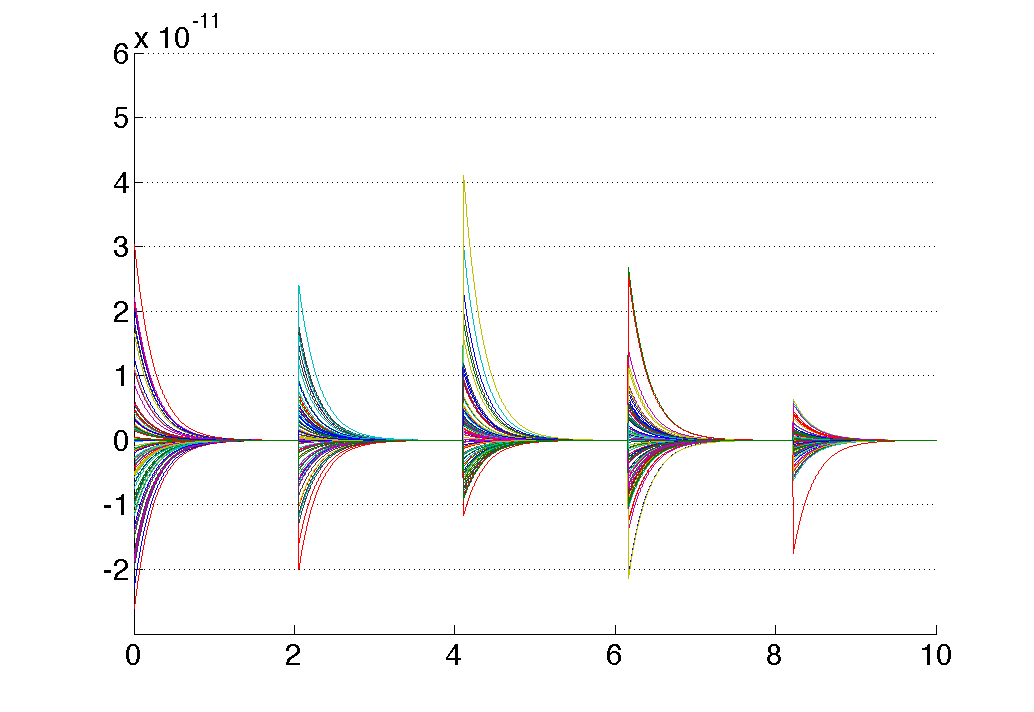}

}\caption{\label{fig:EulerInvOU}Compound rounding error for the Ornstein-Uhlenbeck
process}
\end{figure}

\section{Application to investment in electricity generation\label{sec:Application}}

This section is devoted to an application of the resolution method
studied in Section \ref{sec:Optimal-switching}. We choose to apply
it to an investment problem in electricity generation on a single
geographical zone. We intend to show that it is possible to provide
a probabilistic outlook of future electricity generation mixes instead
of a deterministic outlook provided by planification methods. Nevertheless,
the problem presents so many difficulties that addressing all of them
in the same model is unresonable. Some aspects have thus to be left
aside. Our goal here is to show that the algorithm described in Section
\ref{sub:Convergence} can handle high-dimensional investment problems.
We focus on the influence of investment decisions on the spot price,
consistently with the fundamentals of the electricity spot price formation
mechanism.

Although the strategic aspect of investment is an important driver
of utilities' decisions, this aspect is beyond the scope of our modeling
approach. There exist models limited to a two-stage decision making
(see for instance \cite{Murphy05}), but in the case of continuously
repeated multiplayer game models, defining what is a closed-loop strategy
is already a difficulty (see Sec. 2 in \cite{Back09}).

We did not consider time-to-build in this implementation either. Relying
on the fact that it is possible to transform an investment model with
time-to-build into a model without time-to-build by replacing capacities
with committed capacities (see \cite{BarIlan02,Aguerrevere03} for
implementations in dimension one, and \cite{Federico12} in dimension
two), we left this aspect for future work.

Finally, we did not consider the dynamic constraints of power generation.
Their effect on spot prices is well-known: they tend to increase spot
prices during peak hours and to decrease them during off-peak hours
(see \cite{Langrene11}). However, we assume here that this effect
is negligible compared to the effect induced by a lack or an excess
of capacity.

Thus, we focused on the following key factors of electricity spot
prices: demand, capacities (including random outages) and fuel prices.
Our model is based on \cite{Aid09,Aid12}, where the electricity spot
price is defined as a linear combination of fuel prices multiplied
by a scarcity factor. This model exhibits the main feature wanted
here, which is that the spot price, being determined both by the fuel
prices and the residual capacity, is directly affected by the evolution
of the installed capacity. When the residual capacity tends to decrease,
spot prices will tend to increase, making investment valuable. Thus,
in this model, investments are undertaken not on the specific purpose
of satisfying the demand but as soon as they are profitable. In our
example, new capacities are invested according to the criterion of
value maximization. Energy non-served and loss of load probability
may still be adjusted through the price cap on the spot market.

In this section, we first detail the chosen modelling and objective
function (which will be shown to be encompassed in the general optimal
multiple switching problem \eqref{eq:v}), and then solve it numerically
using the general algorithm developed in the previous sections.

\subsection{Modelling\label{sub:Modelling}}

The key variable in order to describe our electricity generation investment
problem is the price of electricity. More precisely, the key quantities
are the spreads between the prices of electricity and other energies.
To model these spreads accurately, it may be worth considering a structural
model for electricity (cf. the survey \cite{Carmona12}). Here we
choose such a model, mainly inspired by those introduced in \cite{Aid09}
and \cite{Aid12}, albeit amended and customized for a long-term time
horizon. All the variables involved are detailed below.

\subsubsection{Electricity demand}

The electricity demand, or electricity load, at time $t$ on the given
geographical zone considered is modelled by an exogenous stochastic
process $\left(D_{t}\right)_{t\geq0}$:
\begin{equation}
D_{t}=f_{0}\left(t\right)+Z_{t}^{0}\label{eq:Demand}
\end{equation}
where $Z^{0}$ is an Ornstein-Uhlenbeck (henceforth O.U.) process:
\[
dZ_{t}^{0}=-\alpha_{0}Z_{t}^{0}dt+\beta_{0}dW_{t}^{D}
\]
where $\alpha_{0}$ and $\beta_{0}$ are constants, and $f_{0}$ is
a deterministic function that takes into account demand seasonalities:
\begin{equation}
f_{0}\left(t\right)=d_{1}+d_{2}\cos\left(2\pi\frac{t-d_{3}}{l_{1}}\right)+f_{week}\left(t\right)\label{eq:SeasonalityD}
\end{equation}
where $d_{j}$, $1\leq j\leq3$ are constants, and, assuming that
$t$ is expressed in years, $l_{1}=1$ (yearly seasonality), and $f_{week}$
is a periodic non-parametric deterministic function describing the
intra-week load pattern.

\subsubsection{Production capacities\label{par:Production-capacities}}

Let $d'$ be the number of different production technologies. Denote
as $I_{t}=\left(I_{t}^{1},\ldots,I_{t}^{d'}\right)$ the installed
production capacities at time $t$. They represent the maximum amount
of electricity that is physically possible to produce. These fleets
can be modified: at a given time $\tau_{n}$, one can decide to build
(or dismantle) an amount $\zeta_{n}$ of capacities:
\begin{equation}
I_{\tau_{n}}=I_{\tau_{n}^{-}}+\zeta_{n}\,,\,\, n\geq0\label{eq:InstalledCapacity}
\end{equation}
Denote as $\alpha=\left(\tau_{n},\zeta_{n}\right)_{n\geq1}$ the corresponding
impulse control strategy, where $\left(\tau_{n}\right)_{n\geq0}$
is an increasing sequence of stopping times with $\tau_{n}\nearrow\infty$
when $n\rightarrow\infty$, and $\left(\zeta_{n}\right)_{n\geq0}$
is a sequence of vectors corresponding to the increases (or decreases)
in capacities. Apart from these variations, $I_{t}$ will be deemed
constant, i.e.:
\begin{equation}
I_{t}=I_{0-}+\sum_{n,\,\tau_{n}\leq t}\zeta_{n}\,.\label{eq:InstalledCapacitySum}
\end{equation}
Now, denote as $C_{t}=\left(C_{t}^{1},\ldots,C_{t}^{d'}\right)$ the
available production capacities. Because of spinning reserves, maintenance
and random outages, these quantities are lower than the installed
capacities $I_{t}$, which represent their physical maximum. In other
terms, $C_{t}$ is a fraction of $I_{t}$:
\begin{equation}
C_{t}^{i}=I_{t}^{i}\times A_{t}^{i}\label{eq:AvailableCapacity}
\end{equation}
for every $1\leq i\leq d'$, where $A_{t}^{i}$ corresponds to the
rate of availability of the $\mathrm{i^{th}}$ production technology.
Therefore one must choose a model for the process $A_{t}$ that ensures
that it stays within the interval $\left[0,1\right]$.

One possibility would be to model it as a Jacobi process (see for
instance \cite{Veraart10},where it is used to model stochastic correlations,
and the references therein for more information on this process).
This process is however tricky to estimate and simulate (see \cite{Gourieroux04}
for the description of some possible methods), and its main simulation
method (the truncated Euler scheme) disables our memory reduction
method described in Subsection \ref{sub:MemoryReduction}. Hence we
look for a simpler model.

In \cite{Wagner12}, a detailed structural model for electricity is
developed, which includes renewable energies like wind and solar.
In particular, wind power infeed efficiency (which belongs to $\left[0,1\right]$)
is modelled as a logit transform of an Ornstein-Uhlenbeck process
with seasonality. Adapting this idea, we model $\left(A_{t}^{i}\right)_{t\geq0}^{1\leq i\leq d'}$
as follows:
\begin{equation}
A_{t}^{i}:=\mathcal{T}\left(f_{i}\left(t\right)+Z_{t}^{i}\right)\label{eq:AvailabilityRate}
\end{equation}
where $Z$, $f$ and $\mathcal{T}$ are chosen as follows:
\begin{itemize}
\item $Z^{i}$ is an O.U. process :
\[
dZ_{t}^{i}=-\alpha_{i}Z_{t}^{i}dt+\beta_{i}dW_{t}^{Z^{i}}
\]
where $\alpha_{i}>0$, $\beta_{i}>0$ and $\left(W_{t}^{Z^{i}}\right)_{t\geq0}$
is a Brownian motion.
\item The deterministic function $f_{i}$ accounts for the seasonality in
the availability of production capacities:
\begin{equation}
f_{i}\left(t\right)=c_{1}^{i}+c_{2}^{i}\cos\left(2\pi\frac{t-c_{3}^{i}}{l_{1}}\right)\label{eq:SeasonalityC}
\end{equation}
where $c_{k}^{i}$, $1\leq k\leq3$, $1\leq i\leq d'$ are constants.
This seasonality stems from the maintenance plannings, which usually
mimic the long term seasonality of demand, which in turn originates
in the seasonality of temperature.
\item The function $\mathcal{T}:\mathbb{R}\rightarrow\left[0,1\right]$
is here to ensure that $\forall t\geq0,\, A_{t}\in\left[0,1\right]^{d'}$.
One can choose the versatile logit function as in \cite{Wagner12},
or any other mapping of $\mathbb{R}$ into $\left[0,1\right]$. For
instance, any cumulative distribution function would be suitable.
As the process $Z$ is Gaussian and asymptotically stationary, we
choose for $\mathcal{T}$ the (standard) normal cumulative distribution
function, as it makes, in particular, the calibration process trivial.
\end{itemize}

\subsubsection{Fuels and\textmd{ $\textrm{CO}_{\textrm{2}}$} prices}

For each technology $i$, denote as $S_{t}^{i}$ the price of the
fuel $i$ to produce electricity at time $t$. In the particular case
of renewable energies, which, \textit{per se,} do not involve traded
fuels, the corresponding $S_{t}^{i}$ can be chosen to be zero. Moreover,
define $S_{t}^{0}$ as the price of $\textrm{CO}_{\textrm{2}}$. Denote
as $S_{t}$ the full vector $\left(S_{t}^{0},S_{t}^{1},\ldots,S_{t}^{d'}\right)$.

Now, we introduce the multiplicative constants needed to convert theses
quantities into \euro{}/MWh. For each technology $i=1,\ldots,d'$,
let $h_{i}$ denote its heat rate, and $h_{i}^{0}$ denote its $\textrm{CO}_{\textrm{2}}$
emission rate. Hence, the quantity
\begin{equation}
\tilde{S}_{t}^{i}:=h_{i}^{0}S_{t}^{0}+h_{i}S_{t}^{i}\label{eq:FuelPrice}
\end{equation}
expressed in \euro{}/MWh, corresponds to the price in \euro{} to
pay in order to produce $1$MWh of electricity using the $i$th technology.
We note $h^{0}=\left(h_{1}^{0},\ldots,h_{d'}^{0}\right)\in\mathbb{R}^{d'}$
and $h=\left(h_{1},\ldots,h_{d'}\right)\in\mathbb{R}^{d'}$.
\begin{rem}
One can choose to add a fixed cost into the definition of $\tilde{S}_{t}^{i}$.
This is all the more so relevant for technologies whose fixed costs
outweigh the cost of fuel (e.g. nuclear).
\end{rem}
Adapting the work of \cite{Benmenzer07}, we model $S_{t}$ as a multidimensional,
cointegrated geometric Brownian motion:
\[
dS_{t}=\Xi S_{t}dt+\Sigma S_{t}dW_{t}^{S}
\]
where $\Xi$ and $\Sigma$ are $\left(d'+1\right)\times\left(d'+1\right)$
matrices with $1\leq\mathrm{rank}\left(\Xi\right)<d'$, and $\left(W_{t}^{S}\right)_{t\geq0}$
is a $\left(d'+1\right)$-dimensional Brownian motion. This model
ensures the positivity of prices, as well as the existence of long-term
relationships between energy prices (the relevance of which is illustrated,
for instance, in \cite{Obermayer09}).

\subsubsection{Electricity price\label{sub:ElectricityPrice}}

We model the price of electricity using a long-term structural model.
First, we define the marginal cost of electricity using the previously
introduced variables. For any time $t\geq0$, define the permutation
$\left(1\right),\ldots\left(M\right)$ of the numbers $1,\ldots,M$,
such that $S_{t}^{\left(1\right)}\leq\ldots\leq S_{t}^{\left(M\right)}$.
Then, define $\overline{C}_{t}^{\left(i\right)}$ as the total capacity
available at time $t$ from the $i$ first technologies, i.e. $\overline{C}_{t}^{\left(i\right)}:=\sum_{j\leq i}C_{t}^{\left(j\right)}$.
Using these notations and equation \eqref{eq:FuelPrice}, the marginal
cost of electricity at time $t$ is given by:
\begin{eqnarray*}
MC_{t}: & = & \widetilde{S}_{t}^{\left(1\right)}\mathbf{1}\left\{ D_{t}<\overline{C}_{t}^{\left(1\right)}\right\} +\sum_{i=2}^{M-1}\widetilde{S}_{t}^{\left(i\right)}\mathbf{1}\left\{ \overline{C}_{t}^{\left(i-1\right)}\leq D_{t}<\overline{C}_{t}^{\left(i\right)}\right\} +\widetilde{S}_{t}^{\left(M\right)}\mathbf{1}\left\{ \overline{C}_{t}^{\left(M-1\right)}\leq D_{t}\right\} \\
 & = & \widetilde{S}_{t}^{\left(1\right)}+\sum_{i=1}^{M-1}\left(\widetilde{S}_{t}^{\left(i+1\right)}-\widetilde{S}_{t}^{\left(i\right)}\right)\mathbf{1}\left\{ D_{t}-\overline{C}_{t}^{\left(i\right)}\geq0\right\} 
\end{eqnarray*}
Refer to \cite{Aid09} for more details on marginal costs. Remark
that the price of $\textrm{CO}_{\textrm{2}}$ emissions is explicitly
included in the marginal cost (through equation \eqref{eq:FuelPrice}).

Now, we are going to use this marginal cost as a building block of
our price model, along with some power law \textit{scarcity premiums}
(along the lines of \cite{Aid12}) as well as a fixed upper bound
\footnote{Indeed, in the French, German and Austrian markets for instance, power
prices cannot be set outside the $\left[-3000,3000\right]$\euro{}/MWh
range, see \url{http://www.epexspot.com/en/product-info/auction}..%
}.

First, consider two points $\left(x_{1},y_{1}\right)$ and $\left(x_{2},y_{2}\right)$
in $\mathbb{R}^{2}$. One can always find three positive constants
$a:=a\left(x_{1},x_{2},y_{1},y_{2}\right)$, $b:=b\left(x_{1},x_{2},y_{1},y_{2}\right)$
and $c:=c\left(x_{1},x_{2},y_{1},y_{2}\right)$ such that the function:
\begin{equation}
p\left(x\right):=p\left(x;x_{1},x_{2},y_{1},y_{2}\right)=\frac{a}{b-x}+c\label{eq:knitting}
\end{equation}
satisfies $p\left(x_{1}\right)=y_{1}$ and $p\left(x_{2}\right)=y_{2}$
\footnote{For instance, fix $a>0$, then define $b=\frac{1}{2}\left(x_{1}+x_{2}+\sqrt{\left(x_{2}-x_{1}\right)^{2}+4a\frac{x_{2}-x_{1}}{y_{2}-y_{1}}}\right)$$ $
and finally $c=y_{1}-\frac{a}{b-x_{1}}$.%
}.

Using this notation, introduce the price $P_{t}$ of electricity,
defined as follows:
\begin{eqnarray}
P_{t} & := & \widetilde{S}_{t}^{\left(1\right)}\mathbf{1}\left\{ D_{t}<0\right\} +\left\{ \widetilde{S}_{t}^{\left(1\right)}+p\left(D_{t};0,\overline{C}_{t}^{\left(1\right)},\widetilde{S}_{t}^{\left(1\right)},\widetilde{S}_{t}^{\left(2\right)}\right)\right\} \mathbf{1}\left\{ 0\leq D_{t}<\overline{C}_{t}^{\left(1\right)}\right\} \nonumber \\
 &  & \sum_{i=2}^{d'-1}\left\{ \widetilde{S}_{t}^{\left(i\right)}+p\left(D_{t};\overline{C}_{t}^{\left(i-1\right)},\overline{C}_{t}^{\left(i\right)},\widetilde{S}_{t}^{\left(i\right)},\widetilde{S}_{t}^{\left(i+1\right)}\right)\right\} \mathbf{1}\left\{ \overline{C}_{t}^{\left(i-1\right)}\leq D_{t}<\overline{C}_{t}^{\left(i\right)}\right\} \nonumber \\
 &  & +\left\{ \widetilde{S}_{t}^{\left(d'\right)}+p\left(D_{t};\overline{C}_{t}^{\left(d'-1\right)},\overline{C}_{t}^{\left(d'\right)},\widetilde{S}_{t}^{\left(d'\right)},M_{\max}\right)\right\} \mathbf{1}\left\{ \overline{C}_{t}^{\left(d'-1\right)}\leq D_{t}\right\} \label{eq:TheModel}
\end{eqnarray}

where $M_{\max}>0$ is a fixed upper bound on the price of electricity.
In particular, the last term, the one involving $M_{\max}$, enables
price spikes to occur (when the residual capacity is small).

Moreover, thanks to the knitting function \eqref{eq:knitting}, the
electricity price $P$ is a Lipschitz continuous function of the structural
variables $D$, $C$ and $S$ %
\footnote{Rigorously, this property requires that $C$ does not reach zero.
One can, for instance, add a fixed minimum availability rate $1\gg a_{\min}>0$
to the definition \eqref{eq:AvailabilityRate}, replacing $\mathcal{T}$
by $a_{\min}+\left(1-a_{\min}\right)\mathcal{T}$%
}, which is what motivated this specific choice of model.

\subsubsection{Objective function\label{sub:ObjectiveFunction}}

We now explicit the objective function of the investor in electricity
generation. Suppose that, at time $t$, an agent (a producer, or an
investor) modifies the level of installed capacity of type $j\in\left[1,d'\right]$,
from $I_{t-}^{j}$ to $I_{s}^{j}=I_{t-}^{j}+\zeta^{j}$, $s\geq t$
. It generates the cost:
\[
k\left(\zeta^{j}\right):=\begin{cases}
\kappa_{j}^{f+}+\zeta^{j}\kappa_{j}^{p+} & ,\,\zeta^{j}>0\\
0 & ,\,\zeta^{j}=0\\
\kappa_{j}^{f-}-\zeta^{j}\kappa_{j}^{p-} & ,\,\zeta^{j}<0
\end{cases}
\]
where $\kappa_{j}^{f+}$ and $\kappa_{j}^{p+}$ are the fixed and
proportional costs of building new plants of type $j$, and $\kappa_{j}^{f-}$
and $\kappa_{j}^{p-}$ are the fixed and proportional costs of dismantling
old plants of type $j$. 

Consider the case of new plants ($\zeta^{j}>0$). Assuming that the
global availability rate\eqref{eq:AvailabilityRate} of technology
$j$ applies to the new plants, they can then produce up to $\zeta^{j}A_{s}^{j}$,
$s\geq t$, or, more precisely, according to the stack order principle:
\[
\min\left\{ \zeta^{j}A_{s}^{j},\,\left(D_{s}-\overline{C}_{s}^{\left(j-1\right)}\right)^{+}\right\} 
\]

assuming that, in the stack order, the new plants are called before
the older plants $I_{t-}$ of the same technology (as they can be
expected to have an at least slightly better efficiency rate compared
to the older plants of the same technology, a phenomenon that can
be seen as partly captured by the function \eqref{eq:knitting}).

At time $s\geq t$, this production is sold at price $P_{s}$, but
costs $\tilde{S}_{s}$ to produce (if $P_{s}<\tilde{S}_{s}$, then
of course the producer chooses not to produce). In addition, regardless
of the output level, there may exist a fixed maintenance cost $\kappa_{j}$.
Summing up all these gains, discounted to time $t$ using a constant
interest rate $\rho>0$, the new plant yield a revenue of:

\[
\int_{t}^{\infty}e^{-\rho s}\left(\min\left\{ \zeta^{j}A_{s}^{j},\, D_{s}-\overline{C}_{s}^{\left(j-1\right)}\right\} \times\left(P_{s}-\widetilde{S}_{s}^{j}\right)^{+}-\kappa_{i}\right)ds
\]

(noticing that with our power price model, $\left\{ D_{s}-\overline{C}_{s}^{\left(j-1\right)}\leq0\right\} \Leftrightarrow\left\{ P_{s}-\widetilde{S}_{s}^{j}\leq0\right\} $).
This was the cost-benefit analysis for one quantity $\zeta^{j}$ of
new plants. Now, consider as a whole the full fleet of the geographical
zone considered. Maximizing the expected gains along the potential
new plants yields the following value function:
\begin{equation}
v\left(t,x,i\right)=\sup_{\alpha\in\mathcal{A}_{t,i}}\mathbb{E}\left[\sum_{j=1}^{d'}\int_{t}^{\infty}e^{-\rho s}\left(\min\left\{ C_{s}^{j},D_{s}-\overline{C}_{s}^{\left(j-1\right)}\right\} \times\left(P_{s}-\widetilde{S}_{s}^{j}\right)^{+}-\kappa_{i}\right)ds-\sum_{\tau_{n}\geq t}e^{-\rho\tau_{n}}k\left(\zeta^{j}\right)\right]\label{eq:OBJECTIVE}
\end{equation}
where the strategies $\alpha$ affect the installed capacities (equations
\eqref{eq:InstalledCapacitySum}), hence also the available capacities
(equation \eqref{eq:AvailableCapacity}) as well as the power price
(equation \eqref{eq:TheModel}), and where the cash flows are purposely
discounted up to time $0$, the time of interest.
\begin{rem}
Replacing $P$ in \eqref{eq:OBJECTIVE} by its definition \eqref{eq:TheModel},
it is patent that this objective function fits into the mould studied
thoroughly in Section \ref{sub:Convergence}. In Subsection \ref{sub:Numerics}
below, our algorithm will be applied to this specific objective function.
\end{rem}
\medskip{}

\begin{rem}
Remark that under this modelling, the demand is satisfied as long
as it does not exceed the total available capacity. Indeed, the effective
output of the plant $\zeta^{j}$ is equal to $\mathbf{1}\left\{ P_{s}-\widetilde{S}_{s}^{j}>0\right\} \times\min\left\{ \zeta^{j}A_{s}^{j},\,\left(D_{s}-\overline{C}_{s}^{\left(j-1\right)}\right)^{+}\right\} $.
It is indeed governed by the electricity spot price level, but, as
under our modelling $\mathbf{1}\left\{ P_{s}-\widetilde{S}_{s}^{j}>0\right\} =\mathbf{1}\left\{ D_{s}-\overline{C}_{s}^{\left(j-1\right)}>0\right\} $,
summing up the effective outputs of all the power plants yield $\sum_{j=1}^{d'}\min\left\{ C_{s}^{j},D_{s}-\overline{C}_{s}^{\left(j-1\right)}\right\} \times\mathbf{1}\left\{ D_{s}-\overline{C}_{s}^{\left(j-1\right)}>0\right\} =\min\left\{ D_{s},\overline{C}_{s}^{\left(d'\right)}\right\} $.
\end{rem}

\subsection{Numerical results\label{sub:Numerics}}

Finally, we solve the control problem described in Subsection \ref{sub:Modelling}
on a numerical example, using the algorithm detailed in Subsection
\ref{sub:Convergence} combined with the general memory reduction
method described in Subsection \ref{sub:MemoryReduction}.

Our purpose here is not to perform a full study of investments in
electricity markets, but a more modest attempt at illustrating the
practical feasibility of our approach, with some possible outputs
that the algorithm can provide.

We consider a numerical example including two cointegrated fuels (in
addition to the price of $\mathrm{CO_{2}}$): one ``base fuel''
and one ``peak fuel'', starting respectively from $40$\euro{}/MWh
and $80$\euro{}/MWh. Hence, using the notations from Subsection
\ref{sub:Modelling}, $d'=2$ (two technologies) and $d=6$ ( electricity
demand, $\mathrm{CO_{2}}$ price, two fuel prices and two availability
rates). The main choices of parameters for this application (initial
fuel prices and volatilities, initial fleet and proportional costs
of new power plants) are summed up in Table \ref{tab:param}. Moreover,
the demand process starts from $D_{0}=70$GW and does not integrate
any linear trend.

\begin{table}[H]
\noindent \begin{centering}
\begin{tabular}{|c|c|c|c|c|}
\hline 
i & $S_{0}^{i}$ & $\sigma^{i}$ & $I_{0}^{i}$ & $\kappa_{i}^{p+}$\tabularnewline
\hline 
\hline 
1 & $40$\euro{}/MWh & $5\%$ & $67$GW & $0.24$ $10^{9}$\euro{}/GW\tabularnewline
\hline 
2 & $80$\euro{}/MWh & $15\%$ & $33$GW & $2.00$ $10^{9}$\euro{}/GW\tabularnewline
\hline 
\end{tabular}
\par\end{centering}

\caption{\label{tab:param}Model parameters}
\end{table}

In order to take into account the minimum size of one power plant
we restrict the values of the installed capacity process\eqref{eq:InstalledCapacitySum}
to a (bi-dimensional) fixed grid $\Lambda^{d'}$, with a mesh of $1$GW.
We make the simplifying assumptions that investments are irreversible,
and that no dismantling can occur (recall from Subsection \ref{sub:Complexity}
the computational gain provided by this assumption).
\begin{rem}
If such a grid is indeed manageable in dimension $d'=2$, it may less
be the case if additional technologies were considered. However, as
discussed in \cite{Tan11} equation (3.2), instead of performing one
regression for each $i\in\Lambda^{d'}$, one can solve equation \eqref{eq:DDP-approx-5}
at time $t_{i}$ by only one $(d+d')$-dimensional regression, by
choosing an a priori law for the randomized control $\zeta_{t_{i}}$.
The error analysis from Section \ref{sec:Optimal-switching} can be
easily generalized to such regressions in higher dimension.
\end{rem}
Finally, we consider the following numerical parameters. We choose
a time horizon $T=40$ years and a time step $h=\frac{1}{730}$ (i.e.
two time steps per day, allowing for some intraday pattern in the
demand process) but allow for only one investment decision per year.
For the regression, we consider a basis of $b=2^{d}=64$ adaptative
local functions, chosen piecewise linear on each hypercube (which
is a bit more refined than the piecewise constant basis studied in
Section \ref{sub:Convergence}) on a sample of $M=5000$ trajectories.

The numerical results obtained under this set of parameters are displayed
on Figures \ref{fig:Strategies} and \ref{fig:ElecPrice}.

First, Figure \ref{fig:Strategies} deals with the optimal strategies.
Figure \ref{fig:stratContour} displays the time evolution of the
average as well as the variability of the optimal fleet (only the
new plants are shown). One can distinguish a first short phase characterised
by the construction of several GW of peak load assets, followed by
a much slower second phase involving the construction of both base
load and peak load assets. Moreover, the variability of the optimal
fleet increases over time. The detailed histogram of the optimal strategy
at time $T=40$ years is displayed on Figure \ref{fig:finalFleet},
where it is combined with the price of fuel. One can see that the
more the peak fuel is expensive (and hence both fuels are expensive
on average, as they are cointegrated), the more constructions of base
load plants occur.

\begin{figure}[H]
\hspace{-5.1mm}\subfloat[\label{fig:stratContour}Time evolution of new capacities]{\includegraphics[width=0.41\paperwidth]{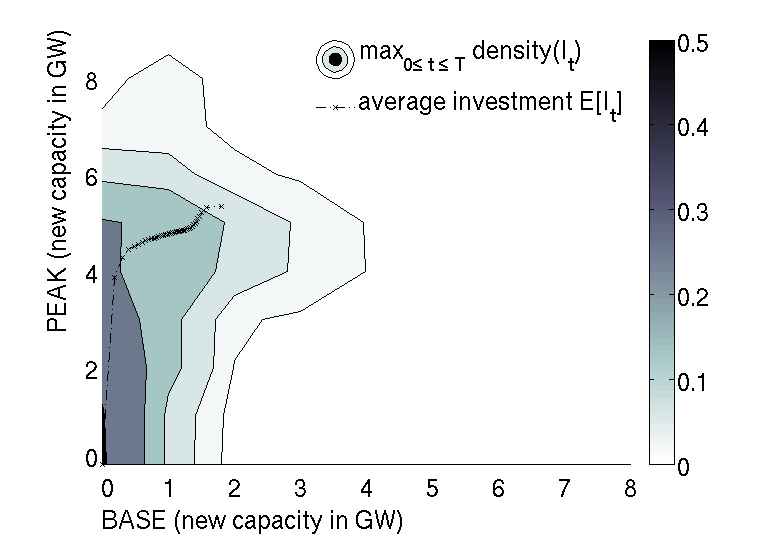}

}\hspace{-3mm}\subfloat[\label{fig:finalFleet}Final fleet distribution]{\includegraphics[width=0.41\paperwidth]{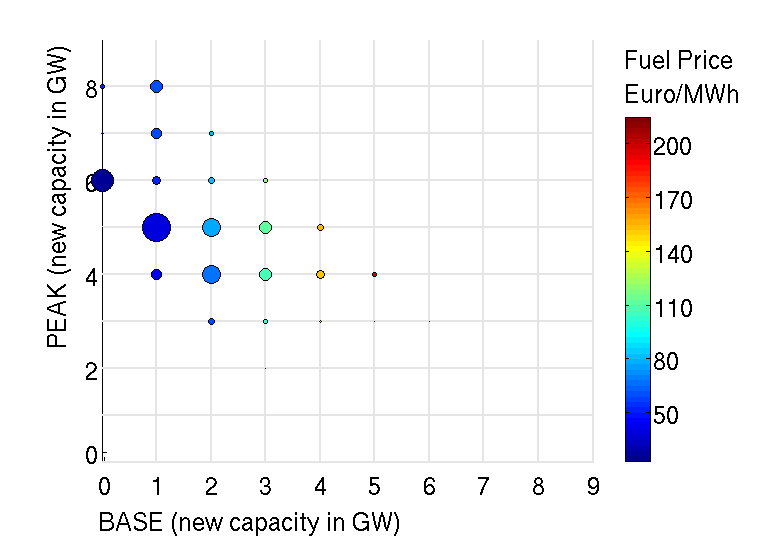}

}\caption{\label{fig:Strategies}Optimal strategies}
\end{figure}

The fact that the average fleet seem to converge is related to the
fact that this numerical example does not consider any growth trend
in the electricity demand (see equation \eqref{eq:SeasonalityD}).
Otherwise, more investments would occur, indeed, to keep the pace
with consumption.

Then Figure \ref{fig:ElecPrice} provides information on the price
of electricity. Figure \ref{fig:priceDensity} displays the time evolution
of the electricity spot price density. For better readability, each
density covers one whole year. One can see how the density moves away
from the initial bimodal density (with prices clustering around the
initial prices of the two fuels) towards a more diffuse density. Moreover,
the downward effect of investments on prices can be noticed. This
downward effect is even more visible on Figure \ref{fig:stratComparison}.
It compares the effect on electricity prices of three different strategies:
the optimal strategy, the optimal deterministic strategy (computed
as the average of the optimal strategy), and the do-nothing strategy.
For each strategy, the joint time-evolution of the yearly median price
and the yearly interquartile range are drawn. As expected, prices
tend to be higher and more scattered without any new plant. Nevertheless,
on this specific example, the price distribution under the optimal
deterministic strategy is close to that under the optimal strategy
(only slightly more scattered). 

\begin{figure}[H]
\hspace{-5.1mm}\subfloat[\label{fig:priceDensity}Time evolution of electricity spot price
density]{\includegraphics[width=0.41\paperwidth]{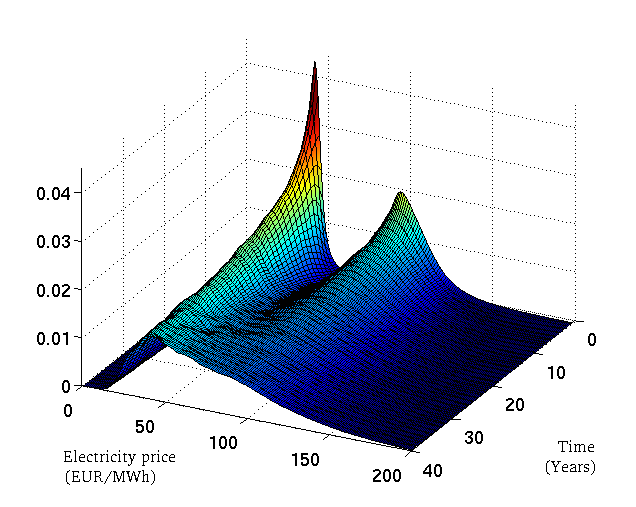}

}\hspace{-3mm}\subfloat[\label{fig:stratComparison}Comparison between investment strategies]{\includegraphics[width=0.41\paperwidth]{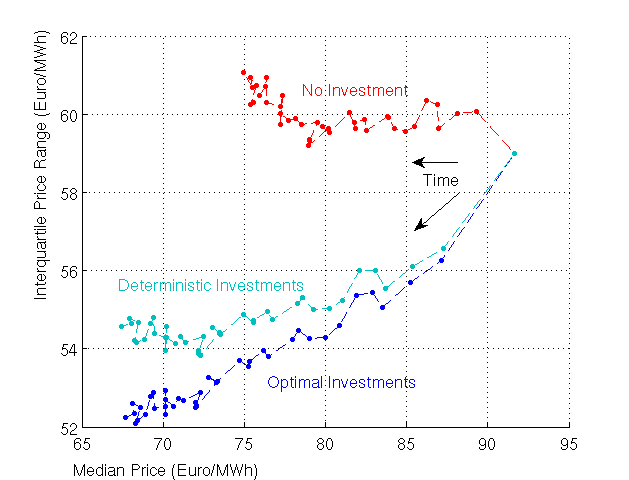}

}\caption{\label{fig:ElecPrice}Electricity spot price}
\end{figure}

These few pictures illustrate the kind on information that can be
be extracted from the resolution of this control problem. Of course,
as a by-product of the resolution, much more can be extracted and
analyzed (distribution of income, $\mathrm{CO}_{2}$ emissions, optimal
exercise frontiers, etc) if needed.

\section{Conclusion\label{sec:Conclusion}}

In this paper, we presented a probabilistic method to solve optimal
multiple switching problems. We showed on a realistic investment model
for electricity generation that it can efficiently provide insight
into the distribution of future generation mixes and electricity spot
prices. We intend to develop this work in several directions in the
future. First, we wish to take into account more generation technologies,
most notably wind farms, nuclear production, as well as solar distributed
production. These additions would raise the dimension of the problem
from eight to fifteen. Yet another range of innovations in numerical
methods will be necessary to overcome this increase in dimension.
Second, we wish to take time-to-build into account. And last but not
least, we wish to adapt the problem to a continuous-time multiplayer
game and contribute to the quest for an efficient algorithm to solve
it.

\appendix
\bibliographystyle{abbrv}
\phantomsection\addcontentsline{toc}{section}{\refname}\bibliography{BiblioPHD}

\end{document}